\numberwithin{equation}{section}
\definecolor{darkred}{rgb}{0.8,0,0}
\DeclareMathOperator{\vectorization}{\operatorname{vec}}
\DeclareMathOperator{\Span}{\operatorname{span}}
\DeclareMathOperator{\diag}{\operatorname{diag}}
\DeclareMathOperator{\arccot}{arccot}
\theoremstyle{definition}
\newtheorem{definition}{Definition}[section]
\newtheorem{remark}[definition]{Remark}
\newtheorem{example}[definition]{Example}
\newtheorem{theorem}[definition]{Theorem}
\newtheorem{lemma}[definition]{Lemma}
\newtheorem{corollary}[definition]{Corollary}
\title{A mathematical theory for mass lumping and its generalization with applications to isogeometric analysis}
\author[1]{Yannis Voet \thanks{yannis.voet@epfl.ch}}
\author[1]{Espen Sande \thanks{espen.sande@epfl.ch}}
\author[1]{Annalisa Buffa \thanks{annalisa.buffa@epfl.ch}}
\affil[1]{\small MNS, Institute of Mathematics, École polytechnique fédérale de Lausanne, Station 8, CH-1015 Lausanne, Switzerland}
\date{\today}
\begin{document}

\maketitle

\begin{abstract}
Explicit time integration schemes coupled with Galerkin discretizations of time-dependent partial differential equations require solving a linear system with the mass matrix at each time step. For applications in structural dynamics, the solution of the linear system is frequently approximated through so-called mass lumping, which consists in replacing the mass matrix by some diagonal approximation. Mass lumping has been widely used in engineering practice for decades already and has a sound mathematical theory supporting it for finite element methods using the classical Lagrange basis. However, the theory for more general basis functions is still missing. Our paper partly addresses this shortcoming. Some special and practically relevant properties of lumped mass matrices are proved and we discuss how these properties naturally extend to banded and Kronecker product matrices whose structure allows to solve linear systems very efficiently. Our theoretical results are applied to isogeometric discretizations but are not restricted to them. \\

\noindent \textbf{Keywords}:
Isogeometric analysis, explicit dynamics, mass lumping, linear algebra, Loewner partial order
\end{abstract}

\section{Introduction and background}
\label{se: introduction}
Mass lumping dates from the early days of the finite element method, in the 1970s, when practitioners were facing the burden of inverting non-diagonal mass matrices in explicit time integration schemes for solving wave propagation problems. They quickly discovered that reasonably good solutions could still be recovered after replacing the mass matrix by some easily invertible (typically diagonal) ad hoc approximation, a procedure which became known as mass lumping. The mass lumping approach relied on physical intuition of mass conservation \citep{zienkiewicz2005finite, hughes2012finite}. Numerous strategies were proposed but all later attention in modern literature has focused on only three of them: the row-sum technique, the nodal quadrature method and a so-called ``special mass lumping'' scheme \cite{hughes2012finite}, more commonly referred to as the diagonal scaling procedure \cite{duczek2019mass}. 

As the name suggests, the row-sum technique consists in constructing a diagonal matrix where the $i$th diagonal entry is simply the sum of all entries on the $i$th row. Apart from being straightforward to implement, it is widely used and appreciated for increasing the critical time step of explicit time integration schemes, a property we will formally prove in this article. However, the row-sum technique has major limitations. First of all, it is prone to failure by generating singular or indefinite lumped mass matrices. While the former are simply not applicable in standard explicit time stepping schemes, the latter might lead to erroneous dynamical behaviors. Nonnegative partition of unity methods, such as isogeometric analysis \citep{hughes2005isogeometric, cottrell2009isogeometric}, are exempt from such breakdowns and have sparked renewed interest in the row-sum technique \citep{cottrell2006isogeometric, leidinger2020explicit}. Unfortunately, the row-sum technique also has detrimental effects on the convergence rate of the smallest generalized eigenvalues by downgrading it to second order. This phenomenon has been consistently observed and proved for isogeometric analysis in \cite{cottrell2006isogeometric} for 1D model problems and low order B-splines. To this date, high order mass lumping in isogeometric analysis is an open research problem and has only been achieved (to our knowledge) in \cite{anitescu2019isogeometric} using dual basis functions.

Another mass lumping technique, albeit specific to classical finite element methods, is the nodal quadrature method. This method consists in choosing as finite element nodes the quadrature nodes of a (near) optimal quadrature rule and cleverly using the interpolation properties of the Lagrange basis. Although this technique might preserve the higher order convergence of the consistent mass \citep{fried1975finite, cohen1994higher}, it also breaks down in case nonpositive quadrature weights are used. Positive weights can be guaranteed by either choosing specific quadrature rules and sacrificing some accuracy \citep{duczek2019mass, durufle2009influence} or enlarging the finite element space by introducing additional degrees of freedom \cite{cohen2001higher}. Both solutions come with their own drawbacks: the first one does not preserve the convergence rate while the second one requires solving a larger system of equations. In either case, research is hampered by tedious element specific analysis. Other authors, instead, have suggested modifying the time integration scheme to accommodate nonpositive weights. In \cite{malkus1986zero}, a mixed implicit-explicit time integration scheme is designed to cope with positive semidefinite (potentially singular) lumped mass matrices.

Finally, the diagonal scaling method \cite{hinton1976note} was designed to circumvent breakdowns in the row-sum and nodal quadrature methods. Positive definite lumped mass matrices are indeed guaranteed by construction. The method scales the diagonal entries of the consistent mass matrix, which are necessarily positive, by positive factors that depend on the mass of the elements.

More recently, selective mass scaling techniques have also been proposed to mitigate the effect of high frequencies on the critical step size \citep{macek1995mass, olovsson2005selective, stoter2022variationally, tkachuk2013variational}. These technique typically consist in adding a positive semidefinite perturbation term to the mass matrix and are designed to preserve the rigid body motion. Although they are used in conjunction with mass lumping, they generally do not preserve the diagonal structure of the lumped mass matrix and practitioners then resort to iterative schemes for solving linear systems \cite{olovsson2006iterative}. Although very few guidelines are available for the choice of scaling parameters, mass scaling techniques have already been implemented in finite element software packages and are especially popular in the automobile industry.

Despite appealing numerical results, the mathematical theory supporting most of these methods is rather limited and usually restricted to specific discretization methods, orders and element types. Although a completely unified analysis is indeed probably intractable, a framework with the least restrictive assumptions is sought. For this reason, our analysis will be restricted to the row-sum technique because it allows us to draw very general conclusions. It is based on purely algebraic considerations and is therefore independent of the discretization method used. Moreover, although the three aforementioned mass lumping strategies generally yield different lumped mass matrices, they might be identical in some cases, as already proved in \cite{duczek2019mass} for the Lagrange basis. Thus, although our conclusions will generally not apply to other mass lumping strategies, they might still hold under certain circumstances. 

The rest of the paper is structured as follows: in Section \ref{se: explicit_dynamics}, we briefly recall the Galerkin discretization of a model wave equation followed by the direct time integration of the semi-discrete equations using the Newmark family of methods and in particular the central difference method. The success of mass lumping techniques depends on their ability to recover good approximate solutions in a computationally efficient manner. Overall, the problem amounts to studying the generalized eigenvalues of perturbed matrix pairs. Therefore, Section \ref{se: main_results} first brings the reader up to date by recalling well-known theoretical results for generalized eigenproblems. The rest of Section \ref{se: main_results} is devoted to the theoretical analysis of perturbed generalized eigenproblems and conclusions are drawn regarding the stability of explicit time integration schemes in connection with mass lumping. For high dimensional discretizations, we first assume that the mass matrix is expressed as a Kronecker product, a structure inherited by tensor product basis functions. Unfortunately, this is generally not the case in practice for a variety of reasons. Thus, in Section \ref{se: NKP}, a two-level approximation is devised by first computing the nearest Kronecker product of a matrix and then applying our results to the approximation itself. Finally, Section \ref{se: conclusion} concludes our article and summarizes our findings. Numerical experiments are presented throughout the article and closely follow the theory. They serve multiple purposes by illustrating the findings, highlighting their limitations and drawing additional insight. In order to foster applications to other problems, theoretical results are stated for general matrices satisfying certain properties. 

\section{Mass lumping in explicit dynamics}
\label{se: explicit_dynamics}
We consider as model problem the classical wave equation. Let $\Omega \subset \mathbb{R}^d$ be an open connected domain with Lipschitz boundary and let $I=[0,T]$ be the time domain with $T>0$ denoting the final time. We look for $u \colon \Omega \times [0,T] \to \mathbb{R}$ such that 
\begin{align}
 \partial_{tt} u(\mathbf{x},t)-\Delta u(\mathbf{x},t) &=f(\mathbf{x},t) & &\text{ in } \Omega \times (0,T], \label{eq: wave_equation} \\
 u(\mathbf{x},t)&=0 & &\text{ on } \partial \Omega \times (0,T], \nonumber\\
 u(\mathbf{x},0)&=u_0(\mathbf{x}) & &\text{ in } \Omega,  \nonumber\\
 \partial_t u(\mathbf{x},0)&=v_0(\mathbf{x}) & &\text{ in } \Omega, \nonumber
\end{align}
where $u_0$ and $v_0$ are some initial displacement and velocity, respectively, and we prescribe homogeneous Dirichlet boundary conditions for simplicity. A Galerkin discretization of the spatial variables (see for instance \citep{hughes2012finite, quarteroni2009numerical}) leads to solving the semi-discrete problem 
\begin{align}
\label{eq: semi_discrete_pb}
\begin{split}
M\ddot{\mathbf{u}}(t) + K\mathbf{u}(t) &= \mathbf{f}(t) \qquad \text{for } t \in [0,T], \\
\mathbf{u}(0) &= \mathbf{u}_0,\\
\dot{\mathbf{u}}(0) &= \mathbf{v}_0.
\end{split}
\end{align}
where $M, K \in \mathbb{R}^{n \times n}$ are the mass and stiffness matrices, respectively, and $\mathbf{u}(t) \in \mathbb{R}^{n}$ is a vector containing the coefficients of the approximate solution $u_h(\mathbf{x}, t)$ in the basis of some finite dimensional subspace. The time-dependent right-hand side $\mathbf{f}(t) \in \mathbb{R}^{n}$ accounts for the forcing function $f$ and potential non-homogeneous Neumann and Dirichlet boundary conditions. Naturally, the properties of $M$ and $K$ depend on the discretization method. Nevertheless, they are always symmetric with $M$ being positive definite for the standard Galerkin method. Provided Dirichlet boundary conditions are prescribed on some portion of the boundary, which is often the case in applications, $K$ is also positive definite. Therefore, we will assume throughout the paper that both $M$ and $K$ are symmetric positive definite.

There exists a host of numerical methods for approximating the solution of the semi-discrete problem \eqref{eq: semi_discrete_pb}. The Newmark method \cite{newmark1959method}, proposed in 1959 and summarized in Algorithm \ref{algo: Newmark}, has been historically used and still remains very popular today. The time domain $[0,T]$ is first discretized using a uniform time step $\Delta t=\frac{T}{N}$ such that $t_s=s\Delta t, \quad s=0,1,\dots,N$ with $N \in \mathbb{N}^*$. The method then consists in approximating the solution $\mathbf{u}(t)$ as well as its first and second time derivative (interpreted as the displacement, velocity and acceleration, respectively) at the discrete times $t_s$. These quantities are denoted $\mathbf{u}_s$, $\mathbf{v}_s$ and $\mathbf{a}_s$ in Algorithm \ref{algo: Newmark}. 

\begin{algorithm}[ht]
\begin{algorithmic}[1]
\caption{Newmark method}
\label{algo: Newmark}
\Statex \textbf{Input}: Initial conditions $\mathbf{u}_0, \mathbf{v}_0$, matrices $M,K$ and parameters $\beta$ and $\gamma$
\Statex \textbf{Output}: Approximate displacement $\{\mathbf{u}_s\}_{s=0}^N$, velocity $\{\mathbf{v}_s\}_{s=0}^N$ and acceleration $\{\mathbf{a}_s\}_{s=0}^N$
\State Compute $\mathbf{a}_0=M^{-1}(\mathbf{f}(t_0)-K\mathbf{u}_0)$ \label{line: initialization}
\For{$s=0,1,\cdots, N-1$}
    \State Compute $\Tilde{\mathbf{u}}_{s+1}=\mathbf{u}_s+\Delta t \mathbf{v}_s+(\frac{1}{2}-\beta)\Delta t^2 \mathbf{a}_s$
    \State Compute $\Tilde{\mathbf{v}}_{s+1}= \mathbf{v}_s+(1-\gamma)\Delta t \mathbf{a}_s$
    \State Compute $\mathbf{a}_{s+1}=(M + \beta \Delta t^2 K)^{-1}(\mathbf{f}(t_{s+1})-K\Tilde{\mathbf{u}}_{s+1})$ \label{line: linear_system}
    \State Compute $\mathbf{v}_{s+1}=\Tilde{\mathbf{v}}_{s+1}+\gamma \Delta t \mathbf{a}_{s+1}$
    \State Compute $\mathbf{u}_{s+1}=\Tilde{\mathbf{u}}_{s+1}+\beta \Delta t^2 \mathbf{a}_{s+1}$
\EndFor
\end{algorithmic}
\end{algorithm}

The accuracy and stability properties of the Newmark method depend on its two parameters $\beta, \gamma \in [0,1]$. Explicit methods are recovered for $\beta=0$ while implicit unconditionally stable ones arise for $\beta \geq \frac{\gamma}{2} \geq \frac{1}{4}$. The central difference method, obtained for $\beta=0$ and $\gamma=\frac{1}{2}$, is a popular explicit second order method and its stability analysis reveals that the critical time step is given by 
\begin{equation}
\label{eq: delta_t_crit_central_diff}
    \Delta t_c=\frac{2}{\sqrt{\lambda_n(K,M)}}
\end{equation}
where $\lambda_n(K,M)$ is the largest eigenvalue of the generalized eigenvalue problem $K\mathbf{u}=\lambda M \mathbf{u}$. For fast dynamic loadings such as blasts or impacts, the central difference method is usually favored over implicit unconditionally stable versions of the Newmark method. Indeed, rapidly changing loading conditions require a sufficiently small time step in order to resolve the correct physical behavior. Thus, the unconditional stability of some implicit methods cannot be fully exploited while the computational burden prevails in Line 5. Indeed, for very large problems, solving the linear system in Line 5 at each time step becomes a major computational bottleneck. However, for explicit methods ($\beta=0$), this linear system only involves the mass matrix. Therefore, substituting the consistent mass matrix in Lines 1 and 5 with some kind of easily invertible approximation $\tilde{M}$ becomes very appealing. Mass lumping stems from this observation. In fact, practitioners might rely on all kinds of other ad hoc substitutions, including the stiffness matrix and right-hand side vector, but we will not consider those here.

While the computational benefits of mass lumping are obvious, the substitution of $M$ with $\tilde{M}$ both effects the stability of the time integration method and the accuracy of the approximate solution. On the one hand, strong numerical evidence suggests that mass lumping improves the stability by increasing the critical time step \citep{leidinger2020explicit, anitescu2019isogeometric}. Verifying this claim amounts to guaranteeing that $\lambda_n(K,\tilde{M}) \leq \lambda_n(K,M)$. On the other hand, the accuracy of the smallest eigenpairs of $(K,M)$ is critical for the accuracy of the finite element solution, although the largest ones might also sometimes play a role \cite{hughes2014finite}. Therefore, the smallest eigenpairs of $(K,\tilde{M})$ must approximate those of $(K,M)$ very well. Consequently, both questions of stability and accuracy can be tackled by comparing the eigenvalues and eigenspaces of some matrix pencils. The expressions ``matrix pencils'' and ``matrix pairs'' will be used interchangeably in this work. The upcoming analysis requires some theoretical understanding of generalized eigenvalue problems. All necessary background knowledge will be covered in the beginning of the next section.

\section{Mathematical properties of mass lumping and its generalization}
\label{se: main_results}
\subsection{Generalized eigenvalue problems}
\label{se: generalized_eigenproblems}
Given the important role generalized eigenproblems play in this work, this section provides a brief reminder of some important properties. Since all matrices we will be considering for our applications are symmetric, we will restrict the discussion to symmetric generalized eigenvalue problems. Let $A,B \in \mathbb{R}^{n \times n}$ be symmetric matrices. We look for pairs $(\mathbf{u},\lambda) \in \mathbb{C}^n \times \mathbb{C}$ such that $A\mathbf{u}=\lambda B\mathbf{u}$. In analogy to standard eigenproblems, the generalized eigenvalues are the roots of the characteristic polynomial $p(\lambda)=\det(A-\lambda B)$. However, contrary to standard eigenvalue problems, symmetry of the coefficient matrices is not sufficient to guarantee real eigenvalues. This point must be emphasized to avoid common misconceptions. A sufficient condition for real finite eigenvalues, which will cover all our applications, is given in the following classical lemma (see for instance \citep[][Theorem VI.1.15]{stewart1990matrix} for the proof).

\begin{lemma}
\label{lem:classical_problem}
Let $(A,B)$ be a symmetric pencil with $B$ positive definite. Then, all generalized eigenvalues are real and there exists an invertible matrix $U \in \mathbb{R}^{n \times n}$ such that
\begin{equation*}
    U^TAU=D, \qquad U^TBU=I,
\end{equation*}
where $D=\diag(\lambda_1, \dots, \lambda_n)$ is a real diagonal matrix containing the eigenvalues.
\end{lemma}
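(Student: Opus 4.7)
The plan is to reduce the generalized eigenproblem $A\mathbf{u}=\lambda B\mathbf{u}$ to a standard symmetric eigenproblem by exploiting the positive definiteness of $B$. Since $B$ is symmetric positive definite, it admits a (unique) symmetric positive definite square root $B^{1/2}$ (constructed, for example, by diagonalizing $B$ orthogonally and taking positive square roots of its eigenvalues). The matrix $B^{1/2}$ is invertible, so I can set $\mathbf{v}=B^{1/2}\mathbf{u}$ and rewrite $A\mathbf{u}=\lambda B\mathbf{u}$ as
\[
    C\mathbf{v} = \lambda \mathbf{v}, \qquad C := B^{-1/2} A B^{-1/2}.
\]
The key observation is that $C$ is symmetric because $A$ is symmetric and $B^{-1/2}$ is symmetric.

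Next, I would invoke the spectral theorem for real symmetric matrices: there exists an orthogonal matrix $Q \in \mathbb{R}^{n\times n}$ and a real diagonal matrix $D=\diag(\lambda_1,\ldots,\lambda_n)$ such that $Q^T C Q = D$. Since the generalized eigenvalues of $(A,B)$ coincide with the standard eigenvalues of $C$, they are necessarily real. This handles the first part of the claim.

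For the simultaneous diagonalization, I would define $U := B^{-1/2} Q$. This matrix is invertible as the product of two invertible matrices. A direct computation then yields
\[
    U^T B U = Q^T B^{-1/2} B B^{-1/2} Q = Q^T Q = I,
\]
and
\[
    U^T A U = Q^T B^{-1/2} A B^{-1/2} Q = Q^T C Q = D,
\]
which is exactly the conclusion of the lemma. The whole argument is essentially bookkeeping once the existence of $B^{1/2}$ and the spectral theorem are taken for granted, so I do not anticipate any real obstacle; the only delicate point is being explicit about the fact that $B^{1/2}$ (and hence $B^{-1/2}$) is itself symmetric, which is what makes $C$ symmetric and lets the spectral theorem apply.
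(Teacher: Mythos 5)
Your proof is correct and complete: the reduction to the standard symmetric eigenproblem for $C=B^{-1/2}AB^{-1/2}$ via the symmetric positive definite square root, followed by the spectral theorem and the substitution $U=B^{-1/2}Q$, is exactly the classical argument. The paper does not supply its own proof of this lemma (it cites Stewart and Sun, Theorem VI.1.15), and your argument is the standard one found there; the only cosmetic remark is that one can equally well use the Cholesky factorization $B=LL^{T}$ in place of $B^{1/2}$, which avoids diagonalizing $B$ but is otherwise identical in structure.
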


More can be said if both $A$ and $B$ are symmetric positive definite. The following implications are a straightforward consequence of Lemma \ref{lem:classical_problem}.
\begin{itemize}[noitemsep]
    \item If $A$ is symmetric positive semidefinite and $B$ is symmetric positive definite, then $\Lambda(A,B) \subset [0, +\infty)$,
    \item If $A$ and $B$ are symmetric positive definite, then $\Lambda(A,B) \subset (0, +\infty)$.
\end{itemize}
We conclude this section by introducing a few notations. $\Lambda(A,B)$ will denote the spectrum (i.e. the set of all eigenvalues) of the matrix pencil $(A,B)$ whereas $\lambda(A,B)$ will refer to individual eigenvalues. For standard eigenvalue problems (when $B=I$), we will simply write $\lambda(A)$ instead of $\lambda(A,I)$. The matrices will be explicitly specified when referring to individual eigenvalues or to the entire spectrum if there is potential confusion. The eigenvalues will always be numbered in ascending algebraic order such that
\begin{equation*}
    \lambda_1(A,B) \leq \lambda_2(A,B) \leq \dots \leq \lambda_n(A,B).
\end{equation*}

\subsection{Mathematical properties of mass lumping}
An important concept used throughout the analysis is the notion of equivalent pencils, which is motivated through the following lemma. Its proof is a simple exercise in \cite[][Chapter 15]{parlett1998symmetric}.

\begin{lemma}
\label{lem:equivalent_pencils}
Let $A,B,E,F \in \mathbb{R}^{n \times n}$ and assume $E$ and $F$ are invertible. Then
\begin{equation*}
    \Lambda(A,B)=\Lambda(EAF, EBF)
\end{equation*}
\end{lemma}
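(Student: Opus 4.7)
The plan is to work directly with the characteristic polynomials of the two pencils, using the definition of the generalized spectrum as the set of roots of $\det(A-\lambda B)$ together with multiplicativity of the determinant. The key algebraic observation is that $E$ and $F$ factor cleanly out of the pencil:
\begin{equation*}
    EAF - \lambda EBF \;=\; E(A - \lambda B)F,
\end{equation*}
which is an identity of matrix polynomials in $\lambda$, valid for every $\lambda \in \mathbb{C}$.

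From here I would take determinants on both sides. By multiplicativity,
\begin{equation*}
    \det(EAF - \lambda EBF) \;=\; \det(E)\,\det(A - \lambda B)\,\det(F).
\end{equation*}
Since $E$ and $F$ are assumed invertible, $\det(E)$ and $\det(F)$ are nonzero scalars. Hence the two characteristic polynomials $p_1(\lambda) = \det(A-\lambda B)$ and $p_2(\lambda) = \det(EAF - \lambda EBF)$ differ only by a nonzero multiplicative constant. In particular, they have exactly the same roots, with the same multiplicities, which by definition of $\Lambda(\cdot,\cdot)$ gives $\Lambda(A,B) = \Lambda(EAF, EBF)$.

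As a side remark I would note that the same factorization $E(A-\lambda B)F$ also explains the correspondence of generalized eigenvectors: if $A\mathbf{u} = \lambda B \mathbf{u}$, then setting $\mathbf{v} = F^{-1}\mathbf{u}$ yields $(EAF)\mathbf{v} = E A \mathbf{u} = \lambda E B \mathbf{u} = \lambda (EBF)\mathbf{v}$, with the inverse map reversing the correspondence; this is not asked by the statement but is worth keeping in mind since later uses of the lemma in the paper (with $E = F^T$, for example) will rely on it implicitly.

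There is no real obstacle here: the argument is a one-line determinant manipulation, and the only thing that must be checked carefully is that invertibility of both $E$ and $F$ is used (to ensure that the scalar $\det(E)\det(F)$ does not vanish and therefore does not introduce spurious roots or kill existing ones). No hypothesis of symmetry or definiteness on $A$ or $B$ is needed, which is consistent with how the lemma is stated.
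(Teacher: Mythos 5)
Your proof is correct and follows essentially the same route as the paper's: factor the pencil as $E(A-\lambda B)F$, take determinants, and use the invertibility of $E$ and $F$ to conclude that the two characteristic polynomials share the same roots. The additional remark about the eigenvector correspondence via $\mathbf{v}=F^{-1}\mathbf{u}$ is a sensible bonus but not needed for the statement.
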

\begin{proof}
Indeed,
\begin{align*}
    \det(EAF-\lambda EBF)&=0 \\
    \iff \det(E)\det(A-\lambda B)\det(F)&=0
\end{align*}
Since we have assumed $E$ and $F$ are invertible ($\det(E) \neq 0$ and $\det(F) \neq 0$), then $\det(EAF-\lambda EBF)=0 \iff \det(A-\lambda B)=0$. Consequently, $\Lambda(A,B)=\Lambda(EAF, EBF)$.
\end{proof}

\begin{definition}[Equivalent pencils]
For matrices $A,B,E,F \in \mathbb{R}^{n \times n}$ with $E,F$ invertible, the matrix pencils $(A,B)$ and $(EAF, EBF)$ are said to be \textit{equivalent}. They are called \textit{congruent} if $E=F^T$.
\end{definition}

The next theorem will play a central role in our analysis. Its proof is a direct consequence of the celebrated Courant-Fischer theorem. The result can also be alternatively deduced starting from Ostrowski's theorem (see for instance \citep[][Theorem 4.5.9]{horn2012matrix}).

\begin{theorem}
\label{th:eig_bounds}
Let $A,B,C \in \mathbb{R}^{n \times n}$ be symmetric positive definite matrices and let all eigenvalues be numbered in ascending algebraic order. Then
\begin{subequations}
\begin{align}
    \lambda_k(A,C)\lambda_1(C,B) &\leq \lambda_k(A,B) \leq \lambda_k(A,C)\lambda_n(C,B) \qquad 1 \leq k \leq n, \label{eq: ineq1}\\
    \lambda_1(A,C)\lambda_k(C,B) &\leq \lambda_k(A,B) \leq \lambda_n(A,C)\lambda_k(C,B) \qquad 1 \leq k \leq n. \label{eq: ineq2}
\end{align}
\end{subequations}
\end{theorem}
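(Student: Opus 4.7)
The plan is to use the generalized Courant–Fischer (min-max) characterization of the eigenvalues of a symmetric pencil with a positive definite second matrix, which by Lemma~\ref{lem:classical_problem} takes the form
\begin{equation*}
    \lambda_k(A,B) \;=\; \min_{\substack{S \subset \mathbb{R}^n \\ \dim S = k}} \; \max_{\substack{x \in S \\ x \neq 0}} \; \frac{x^T A x}{x^T B x}.
\end{equation*}
The whole idea is to factor the generalized Rayleigh quotient through $C$, since for any nonzero $x$ we have the identity
\begin{equation*}
    \frac{x^T A x}{x^T B x} \;=\; \frac{x^T A x}{x^T C x}\cdot \frac{x^T C x}{x^T B x},
\end{equation*}
and both factors are strictly positive because $A$, $B$, $C$ are symmetric positive definite.

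For \eqref{eq: ineq1}, I would bound the second factor by the extreme eigenvalues of the pencil $(C,B)$: again by Courant–Fischer,
\begin{equation*}
    \lambda_1(C,B) \;\leq\; \frac{x^T C x}{x^T B x} \;\leq\; \lambda_n(C,B) \qquad \text{for all } x \neq 0,
\end{equation*}
so that, after multiplying by the (positive) quantity $x^T A x / x^T C x$, one gets
\begin{equation*}
    \lambda_1(C,B)\,\frac{x^T A x}{x^T C x} \;\leq\; \frac{x^T A x}{x^T B x} \;\leq\; \lambda_n(C,B)\,\frac{x^T A x}{x^T C x}.
\end{equation*}
Taking $\max$ over $x \in S\setminus\{0\}$ and then $\min$ over $k$-dimensional subspaces $S$, and pulling the positive constants $\lambda_1(C,B)$ and $\lambda_n(C,B)$ out of the min-max, produces exactly $\lambda_k(A,C)\lambda_1(C,B) \leq \lambda_k(A,B) \leq \lambda_k(A,C)\lambda_n(C,B)$, which is \eqref{eq: ineq1}.

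For \eqref{eq: ineq2}, the strategy is symmetric: bound the \emph{first} factor instead, using
\begin{equation*}
    \lambda_1(A,C) \;\leq\; \frac{x^T A x}{x^T C x} \;\leq\; \lambda_n(A,C),
\end{equation*}
which leads to
\begin{equation*}
    \lambda_1(A,C)\,\frac{x^T C x}{x^T B x} \;\leq\; \frac{x^T A x}{x^T B x} \;\leq\; \lambda_n(A,C)\,\frac{x^T C x}{x^T B x},
\end{equation*}
and again applying the min-max characterization, this time recognizing $\lambda_k(C,B)$ in the middle after pulling out the positive constants. No genuine obstacle arises; the only point requiring care is that all Rayleigh quotients involved are positive so that the inequalities are preserved under multiplication, and that the constants being pulled outside the $\min$-$\max$ are indeed independent of $x$ and $S$, which is why the two-sided bounds come out in the product form stated. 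The result can alternatively be obtained by applying the congruence transformation provided by Lemma~\ref{lem:classical_problem} on $(C,B)$ (or on $(A,C)$) to reduce the problem to Ostrowski's inequality, but the min-max route above is the most direct.
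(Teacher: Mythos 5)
Your proof is correct. It rests on the same two ingredients as the paper's proof --- the generalized Courant--Fischer characterization and the factorization of the Rayleigh quotient through $C$ --- but the execution is genuinely different and somewhat cleaner. The paper bounds the maximum of the product $\frac{x^TAx}{x^TCx}\cdot\frac{x^TCx}{x^TBx}$ by the product of the two maxima over a cleverly chosen test subspace (the span of the first $k$ eigenvectors of $(A,C)$ for one bound, of $(C,B)$ for the other), and must then rerun the whole argument with the max-min characterization and the complementary eigenvector spans to obtain the lower bounds. You instead bound a single factor of the Rayleigh quotient \emph{uniformly in $x$} by the extreme eigenvalues of the corresponding pencil, so the resulting positive constant passes through the $\min$--$\max$ by monotonicity, and both the upper and the lower bound in each of \eqref{eq: ineq1} and \eqref{eq: ineq2} drop out of the single min-max form with no eigenvector subspaces needed at all. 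What the paper's route buys is that the choice of optimal subspace makes it visible where the factor $\lambda_k(A,C)$ (resp.\ $\lambda_k(C,B)$) is attained exactly; what your route buys is brevity and the elimination of the implicit step $\max(fg)\le(\max f)(\max g)$, which requires nonnegativity of both factors and is used but not flagged in the paper. The two points you single out as requiring care --- positivity of the Rayleigh quotients and the $x$-independence of the constants being pulled out --- are exactly the right ones, and both hold under the stated positive definiteness assumptions, so the argument is complete.
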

\begin{proof}
Let $\mathbf{u}_i$, $\mathbf{v}_i$ and $\mathbf{w}_i$ denote the eigenvectors associated to the eigenvalues $\lambda_i(A,B)$, $\lambda_i(A,C)$ and $\lambda_i(C,B)$, respectively. We define the subspaces
\begin{align*}
    \mathcal{U}_k &= \Span \{\mathbf{u}_1, \dots, \mathbf{u}_k \}, & \mathcal{U}'_k &= \Span \{\mathbf{u}_k, \dots, \mathbf{u}_n \}, \\
    \mathcal{V}_k &= \Span \{\mathbf{v}_1, \dots, \mathbf{v}_k \}, & \mathcal{V}'_k &= \Span \{\mathbf{v}_k, \dots, \mathbf{v}_n \}, \\
    \mathcal{W}_k &= \Span \{\mathbf{w}_1, \dots, \mathbf{w}_k \}, & \mathcal{W}'_k &= \Span \{\mathbf{w}_k, \dots, \mathbf{w}_n \}.
\end{align*}
Since all matrices are assumed symmetric positive definite, the generalized eigenvectors of all matrix pencils are linearly independent (see Lemma \ref{lem:classical_problem}). Consequently, $\dim(\mathcal{U}_k)=\dim(\mathcal{V}_k)=\dim(\mathcal{W}_k)=k$ and $\dim(\mathcal{U}'_k)=\dim(\mathcal{V}'_k)=\dim(\mathcal{W}'_k)=n-k+1$. By the Courant-Fischer theorem (see for instance \cite[][Theorem 4.2.6]{horn2012matrix})
\begin{equation*}
    \lambda_k(A,B) = \min_{\substack{\mathcal{S} \subseteq \mathbb{C}^n \\ \dim(\mathcal{S})=k}} \max_{\substack{ \mathbf{x} \in \mathcal{S} \\ \mathbf{x} \neq \mathbf{0}}} \frac{\mathbf{x}^T A\mathbf{x}}{\mathbf{x}^T B \mathbf{x}} = \max_{\substack{\mathcal{S} \subseteq \mathbb{C}^n \\ \dim(\mathcal{S})=n-k+1}} \min_{\substack{ \mathbf{x} \in \mathcal{S} \\ \mathbf{x} \neq \mathbf{0}}} \frac{\mathbf{x}^T A\mathbf{x}}{\mathbf{x}^T B \mathbf{x}} \qquad 1 \leq k \leq n.
\end{equation*}
We already know that the minimum in the second expression is attained for $\mathcal{S}=\mathcal{U}_k$ and the maximum in the third expression is attained for $\mathcal{S}=\mathcal{U}'_k$. Moreover, we note that
\begin{equation*}
    \lambda_k(A,B) = \min_{\substack{\mathcal{S} \subseteq \mathbb{C}^n \\ \dim(\mathcal{S})=k}} \max_{\substack{ \mathbf{x} \in \mathcal{S} \\ \mathbf{x} \neq \mathbf{0}}} \frac{\mathbf{x}^T A\mathbf{x}}{\mathbf{x}^T C \mathbf{x}}\frac{\mathbf{x}^T C\mathbf{x}}{\mathbf{x}^T B \mathbf{x}}
    \leq \max_{\substack{ \mathbf{x} \in \mathcal{X}_k \\ \mathbf{x} \neq \mathbf{0}}} \frac{\mathbf{x}^T A\mathbf{x}}{\mathbf{x}^T C \mathbf{x}}\frac{\mathbf{x}^T C\mathbf{x}}{\mathbf{x}^T B \mathbf{x}} 
    \leq \max_{\substack{ \mathbf{x} \in \mathcal{X}_k \\ \mathbf{x} \neq \mathbf{0}}} \frac{\mathbf{x}^T A\mathbf{x}}{\mathbf{x}^T C \mathbf{x}} \max_{\substack{ \mathbf{x} \in \mathcal{X}_k \\ \mathbf{x} \neq \mathbf{0}}} \frac{\mathbf{x}^T C\mathbf{x}}{\mathbf{x}^T B \mathbf{x}},
\end{equation*}
and
\begin{equation*}
    \lambda_k(A,B) = \max_{\substack{\mathcal{S} \subseteq \mathbb{C}^n \\ \dim(\mathcal{S})=n-k+1}} \min_{\substack{ \mathbf{x} \in \mathcal{S} \\ \mathbf{x} \neq \mathbf{0}}} \frac{\mathbf{x}^T A\mathbf{x}}{\mathbf{x}^T C \mathbf{x}}\frac{\mathbf{x}^T C \mathbf{x}}{\mathbf{x}^T B \mathbf{x}}
    \geq \min_{\substack{ \mathbf{x} \in \mathcal{X}'_k \\ \mathbf{x} \neq \mathbf{0}}} \frac{\mathbf{x}^T A\mathbf{x}}{\mathbf{x}^T C \mathbf{x}}\frac{\mathbf{x}^T C\mathbf{x}}{\mathbf{x}^T B \mathbf{x}} 
    \geq \min_{\substack{ \mathbf{x} \in \mathcal{X}'_k \\ \mathbf{x} \neq \mathbf{0}}} \frac{\mathbf{x}^T A\mathbf{x}}{\mathbf{x}^T C \mathbf{x}} \min_{\substack{ \mathbf{x} \in \mathcal{X}'_k \\ \mathbf{x} \neq \mathbf{0}}} \frac{\mathbf{x}^T C\mathbf{x}}{\mathbf{x}^T B \mathbf{x}},
\end{equation*}
for any given subspaces $\mathcal{X}_k \subseteq \mathbb{C}^n$ and $\mathcal{X}'_k \subseteq \mathbb{C}^n$ with $\dim(\mathcal{X}_k)=k$ and $\dim(\mathcal{X}'_k)=n-k+1$. The strategy now consists in cleverly choosing these subspaces. In particular, for the upper bound, we obtain
\begin{align*}
    \lambda_k(A,B) &\leq \lambda_k(A,C) \lambda_n(C,B) \qquad \text{for } \mathcal{X}_k=\mathcal{V}_k, \\
    \lambda_k(A,B) &\leq \lambda_n(A,C) \lambda_k(C,B) \qquad \text{for } \mathcal{X}_k=\mathcal{W}_k,
\end{align*}
and for the lower bound
\begin{align*}
    \lambda_k(A,B) &\geq \lambda_k(A,C) \lambda_1(C,B) \qquad \text{for } \mathcal{X}'_k=\mathcal{V}'_k, \\
    \lambda_k(A,B) &\geq \lambda_1(A,C) \lambda_k(C,B) \qquad \text{for } \mathcal{X}'_k=\mathcal{W}'_k.
\end{align*}
\end{proof}

The last theorem is useful in understanding the relationship between the eigenvalues of $(K,M)$ and those of $(K,\tilde{M})$ following a worst case analysis. Provided $\tilde{M}$ is symmetric positive definite, a straightforward application of Theorem \ref{th:eig_bounds}, inequalities \eqref{eq: ineq1}, with $A=K$, $B=\tilde{M}$ and $C=M$ leads to
\begin{equation}
   \lambda_1(M,\tilde{M}) \leq \frac{\lambda_k(K,\tilde{M})}{\lambda_k(K,M)} \leq \lambda_n(M,\tilde{M}) \qquad 1 \leq k \leq n. \label{eq:eig_pert_bounds_mass}
\end{equation}
An immediate conclusion can be drawn from these bounds: if $\tilde{M}$ is very good preconditioner for $M$, then the entire spectrum is well approximated since both the lower and upper bounds are close to $1$. They reveal the close connection between the quality of a preconditioner and the approximation of the spectrum. However, not all preconditioners are appropriate for our problem. Indeed, one of our main objectives when designing a preconditioner $\tilde{M}$ is to ensure that $\lambda_k(K,\tilde{M}) \leq \lambda_k(K,M)$ for all $k=1,\dots,n$. This requirement is sometimes easily fulfilled by simply understanding the relation between $M$ and $\tilde{M}$ in the Loewner partial order.  

\begin{definition}[Loewner partial order]
For two symmetric matrices $A,B \in \mathbb{R}^{n \times n}$, we write $A \succeq B$ (respectively $A \succ B$) if $A-B$ is positive semidefinite (respectively positive definite).
\end{definition}

The next lemma gathers some equivalent sufficient conditions guaranteeing that $\lambda_k(K,\tilde{M}) \leq \lambda_k(K,M)$ for all $k=1,\dots,n$.

\begin{corollary}
\label{cor:equivalence_conditions}
Let $A, B,\tilde{B} \in \mathbb{R}^{n \times n}$ be symmetric positive definite matrices, and denote $E=\tilde{B}-B$. Then the statements
\begin{enumerate}[noitemsep]
    \item $E \succeq 0$, \label{eq: eq_cond_1}
    \item $\tilde{B} \succeq B$, \label{eq: eq_cond_2}
    \item $\Lambda(B,\tilde{B}) \subset (0,1]$, \label{eq: eq_cond_3}
\end{enumerate}
are all equivalent and imply that $\lambda_k(A,\tilde{B}) \leq \lambda_k(A,B)$ for all $k=1,\dots,n$.
\end{corollary}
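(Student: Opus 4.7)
The plan is to establish the equivalences first and then use Theorem \ref{th:eig_bounds} to obtain the consequence on the spectrum of $(A,\tilde{B})$.

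The equivalence of (\ref{eq: eq_cond_1}) and (\ref{eq: eq_cond_2}) is immediate from the definition of the Loewner order, since $\tilde{B} \succeq B$ means by definition that $\tilde{B} - B = E$ is positive semidefinite. To connect these with (\ref{eq: eq_cond_3}), I would use the variational (Courant--Fischer) characterization of the extremal generalized eigenvalue. Since $B$ and $\tilde{B}$ are both symmetric positive definite, Lemma \ref{lem:classical_problem} ensures $\Lambda(B,\tilde{B}) \subset (0,+\infty)$, so the only nontrivial content of (\ref{eq: eq_cond_3}) is the upper bound $\lambda_n(B,\tilde{B}) \leq 1$. Writing
\begin{equation*}
    \lambda_n(B,\tilde{B}) = \max_{\mathbf{x} \neq \mathbf{0}} \frac{\mathbf{x}^T B \mathbf{x}}{\mathbf{x}^T \tilde{B} \mathbf{x}},
\end{equation*}
one sees that $\lambda_n(B,\tilde{B}) \leq 1$ is equivalent to $\mathbf{x}^T B \mathbf{x} \leq \mathbf{x}^T \tilde{B} \mathbf{x}$ for every $\mathbf{x} \in \mathbb{R}^n$, which is precisely $\tilde{B} \succeq B$. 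This closes the loop of equivalences.

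For the implication on the eigenvalues of $(A,\tilde{B})$, I would invoke Theorem \ref{th:eig_bounds}, applying the upper bound in \eqref{eq: ineq1} with the substitution $(A,B,C) \mapsto (A, \tilde{B}, B)$. This yields, for every $1 \leq k \leq n$,
\begin{equation*}
    \lambda_k(A,\tilde{B}) \leq \lambda_k(A,B)\, \lambda_n(B,\tilde{B}).
\end{equation*}
By the equivalent condition (\ref{eq: eq_cond_3}), $\lambda_n(B,\tilde{B}) \leq 1$, and the desired inequality $\lambda_k(A,\tilde{B}) \leq \lambda_k(A,B)$ follows.

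I do not anticipate a real obstacle here: everything is a clean consequence of tools already set up in the excerpt. The only small subtlety is making sure that Theorem \ref{th:eig_bounds} applies, which requires $A$, $B$, and $\tilde{B}$ to all be symmetric positive definite---exactly the hypothesis of the corollary. Care should also be taken to state the equivalences as a cycle (for instance $(\ref{eq: eq_cond_1}) \Leftrightarrow (\ref{eq: eq_cond_2}) \Leftrightarrow (\ref{eq: eq_cond_3})$) rather than relying on any unstated ordering.
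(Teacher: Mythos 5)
Your proposal is correct and follows essentially the same route as the paper: the equivalence of the first two conditions is definitional, the equivalence with the third follows from the Courant--Fischer characterization of $\lambda_n(B,\tilde{B})$, and the eigenvalue inequality is obtained from the upper bound in \eqref{eq: ineq1} of Theorem \ref{th:eig_bounds} with $C=B$ and $B$ replaced by $\tilde{B}$, exactly as the paper does. No gaps.
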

\begin{proof}
We first prove that all statements are equivalent:
\newline \eqref{eq: eq_cond_1} $\iff$ \eqref{eq: eq_cond_2}: $\tilde{B} \succeq B \iff \tilde{B}-B = E \succeq 0$.
\newline \eqref{eq: eq_cond_2} $\iff$ \eqref{eq: eq_cond_3}: Since $B$ and $\tilde{B}$ are symmetric positive definite, $\lambda_k(B,\tilde{B})>0$ for all $k=1,\dots,n$. Moreover, $\tilde{B} \succeq B \iff \mathbf{u}^T \tilde{B} \mathbf{u} \geq \mathbf{u}^T B \mathbf{u} \ \forall \mathbf{u} \in \mathbb{R}^n \iff \lambda_n(B, \tilde{B}) \leq 1$ where the last equivalence follows from the Courant-Fischer theorem. 

We now show that $\lambda_k(A,\tilde{B}) \leq \lambda_k(A,B)$ for all $k=1,\dots,n$. Using for instance the third characterization combined with Theorem \ref{th:eig_bounds}, inequalities \eqref{eq: ineq1}, we obtain
\begin{equation*}
    \lambda_k(A,\tilde{B}) \leq \lambda_k(A,B)\lambda_n(B,\tilde{B}) \leq \lambda_k(A,B) \qquad 1 \leq k \leq n.
\end{equation*}
\end{proof}

As we will show, these conditions are actually satisfied for an appropriate definition of mass lumping.

\begin{definition}[Lumping operator]
\label{def: lumping}
Let $B \in \mathbb{R}^{n \times n}$. The lumping operator $\mathcal{L} \colon \mathbb{R}^{n \times n} \to \mathbb{R}^{n \times n}$ is defined as
\begin{equation*}
\mathcal{L}(B)=\diag(d_1,\dots,d_n)
\end{equation*}
where $d_i=\sum_{j=1}^n |b_{ij}|$ for $i=1,\dots,n$.
\end{definition}

\begin{remark}
Definition \ref{def: lumping} defines more specifically the row-sum lumping operator. Some readers might be concerned about the absolute values included in our definition. They are absolutely necessary for our results to hold as they guarantee positive definiteness of the lumped mass matrix provided $B$ does not contain a row of zeros. For nonnegative matrices (i.e. matrices with nonnegative entries), our definition obviously coincides with the usual row-sum mass lumping. Some typical examples include isogeometric analysis \cite{hughes2005isogeometric}, classical low order finite element methods and nonnegative partition of unity methods applicable to generalized or extended finite element methods \cite{schweitzer2013variational}. Even for more classical methods, nonnegative mass matrices can still be constructed for special sets of basis functions and finite elements \cite{yang2017rigorous}.
\end{remark}

The next lemma is a first step in analyzing the preconditioned spectrum $(M,\mathcal{L}(M))$.

\begin{lemma}
\label{lem:spectrum_row_sum_prec}
Let $B \in \mathbb{R}^{n \times n}$ be symmetric positive definite. Then
\begin{enumerate}[noitemsep]
    \item $\Lambda(B, \mathcal{L}(B)) \subset (0,1]$,
    \item If $B$ is nonnegative, then $\lambda_n(B, \mathcal{L}(B))=1$.
\end{enumerate}
\end{lemma}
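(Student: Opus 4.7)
The plan is to verify that $\mathcal{L}(B)$ is itself symmetric positive definite, show that $\mathcal{L}(B) \succeq B$ in the Loewner order, and then invoke Corollary \ref{cor:equivalence_conditions} to conclude the spectrum lies in $(0,1]$. For the second claim, I would exhibit a concrete eigenvector of $(B,\mathcal{L}(B))$ with eigenvalue exactly $1$.

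First, since $B$ is SPD its diagonal entries satisfy $b_{ii}>0$, so $d_i = \sum_j |b_{ij}| \geq b_{ii} > 0$; hence $\mathcal{L}(B)$ is a positive diagonal matrix and in particular SPD, which makes Corollary \ref{cor:equivalence_conditions} applicable. The heart of the argument is the Gershgorin-type estimate: for every $\mathbf{x}\in\mathbb{R}^n$, by symmetry of $B$ and the elementary inequality $|x_i x_j| \leq \tfrac{1}{2}(x_i^2 + x_j^2)$,
\begin{equation*}
\mathbf{x}^T B \mathbf{x} = \sum_{i,j} b_{ij} x_i x_j \leq \sum_{i,j} |b_{ij}|\,|x_i|\,|x_j| \leq \frac{1}{2}\sum_{i,j} |b_{ij}|(x_i^2 + x_j^2) = \sum_{i,j} |b_{ij}| x_i^2 = \sum_i d_i x_i^2 = \mathbf{x}^T \mathcal{L}(B)\mathbf{x},
\end{equation*}
where the second-to-last equality uses $|b_{ij}|=|b_{ji}|$ to identify the two halves of the symmetric sum. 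This establishes $\mathcal{L}(B) \succeq B$, and Corollary \ref{cor:equivalence_conditions} (with $A=B$, $\tilde B = \mathcal{L}(B)$) then yields $\Lambda(B,\mathcal{L}(B))\subset (0,1]$.

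For the second statement, assume in addition that $B$ is entrywise nonnegative. Then $|b_{ij}|=b_{ij}$, so $d_i = \sum_j b_{ij}$, and denoting by $\mathbf{1}$ the all-ones vector we obtain $(B\mathbf{1})_i = \sum_j b_{ij} = d_i = (\mathcal{L}(B)\mathbf{1})_i$. Hence $B\mathbf{1} = \mathcal{L}(B)\mathbf{1}$, meaning $(\mathbf{1},1)$ is a generalized eigenpair of $(B,\mathcal{L}(B))$. Combined with the bound $\Lambda(B,\mathcal{L}(B))\subset (0,1]$ from the first part, this forces $\lambda_n(B,\mathcal{L}(B))=1$.

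I do not anticipate any real obstacle: the only slightly non-obvious point is the Gershgorin-style manipulation exploiting symmetry, but once the absolute values are in place (as explicitly required in Definition \ref{def: lumping}) the computation is essentially immediate. Everything else is bookkeeping plus invoking Corollary \ref{cor:equivalence_conditions}.
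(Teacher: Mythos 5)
Your proof is correct, and for the first claim it takes a genuinely different route from the paper. The paper observes $\Lambda(B,\mathcal{L}(B))=\Lambda(\mathcal{L}(B)^{-1}B)$ and bounds every eigenvalue by $\|\mathcal{L}(B)^{-1}B\|_{\infty}=1$, with positivity coming from both matrices being SPD; the Loewner inequality $\mathcal{L}(B)\succeq B$ is only deduced later (Lemma \ref{lem:negative_semidefiniteness}) by running the Courant--Fischer theorem backwards from this eigenvalue bound. You reverse the logical direction: you prove $\mathcal{L}(B)\succeq B$ directly with the AM--GM/diagonal-dominance estimate $\sum_{i,j}|b_{ij}|\,|x_i|\,|x_j|\le\sum_{i,j}|b_{ij}|x_i^2$ and then read off the spectral inclusion from Corollary \ref{cor:equivalence_conditions}. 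Your quadratic-form computation is the classical argument that a symmetric diagonally dominant matrix with nonnegative diagonal (here $\mathcal{L}(B)-B$) is positive semidefinite; it is more elementary, avoids the induced-norm eigenvalue bound entirely, and — since it nowhere uses positive definiteness of $B$ — immediately yields the two-sided bound $-\mathcal{L}(B)\preceq B\preceq\mathcal{L}(B)$ of Lemma \ref{lem:negative_semidefiniteness} for free. The paper's route is slightly shorter given the norm machinery it already invokes. Your treatment of the second claim (the all-ones eigenvector) is identical to the paper's.
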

\begin{proof}
We prove each point below.
\begin{enumerate}[noitemsep]
\item First note that, by construction, if $B$ is symmetric positive definite then $\mathcal{L}(B)$ is also symmetric positive definite and consequently all eigenvalues of the matrix pencil $(B, \mathcal{L}(B))$ are strictly positive. Moreover, $\Lambda(B, \mathcal{L}(B))=\Lambda(\mathcal{L}(B)^{-1}B)$. Denoting $\|B\|_{\infty}=\max_i \sum_{j=1}^n |b_{ij}|$ the infinity norm of a matrix $B$, we obtain
\begin{equation*}
    \lambda_k(B, \mathcal{L}(B))=\lambda_k(\mathcal{L}(B)^{-1}B) \leq \|\mathcal{L}(B)^{-1}B\|_{\infty} = \max_i \frac{1}{\mathcal{L}(B)_{ii}}\sum_{j=1}^n |b_{ij}|=\max_i \frac{\sum_{j=1}^n |b_{ij}|}{\sum_{j=1}^n |b_{ij}|}=1 \quad k=1,\dots,n,
\end{equation*}
showing that $1$ is an upper bound on the spectrum. 
\item Note that if $B$ is nonnegative, then $(1, \mathbf{e})$ is an eigenpair of $(B, \mathcal{L}(B))$, where $\mathbf{e}$ is the vector of all ones. Thus, the upper bound is always attained.
\end{enumerate}
\end{proof}
The next corollary is one of our main results.

\begin{corollary}
\label{cor:eig_interlacing}
Let $A,B \in \mathbb{R}^{n \times n}$ be symmetric positive definite matrices and let all eigenvalues be numbered in ascending algebraic order. Then
\begin{equation*}
    \lambda_k(A, \mathcal{L}(B)) \leq \lambda_k(A,B) \qquad 1 \leq k \leq n.
\end{equation*}
\end{corollary}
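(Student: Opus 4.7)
The plan is to chain together the two preceding results: Lemma \ref{lem:spectrum_row_sum_prec} and Corollary \ref{cor:equivalence_conditions}. The key observation is that Corollary \ref{cor:equivalence_conditions} was stated in a form precisely tailored to this situation, with three equivalent characterizations of when one symmetric positive definite matrix dominates another in the Loewner order, and the third of these, $\Lambda(B,\tilde B)\subset(0,1]$, is exactly the conclusion of Lemma \ref{lem:spectrum_row_sum_prec}(1) when $\tilde B=\mathcal{L}(B)$.

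First I would verify that $\mathcal{L}(B)$ qualifies as the $\tilde B$ in Corollary \ref{cor:equivalence_conditions}, i.e.\ that $\mathcal{L}(B)$ is symmetric positive definite. Since $B$ is symmetric positive definite, no row of $B$ is identically zero (otherwise $B$ would be singular), so every diagonal entry $d_i=\sum_{j}|b_{ij}|$ is strictly positive. Hence $\mathcal{L}(B)=\diag(d_1,\dots,d_n)$ is a diagonal matrix with positive entries, which makes it symmetric positive definite.

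Next I would invoke Lemma \ref{lem:spectrum_row_sum_prec}(1), which states $\Lambda(B,\mathcal{L}(B))\subset(0,1]$. This is condition (3) of Corollary \ref{cor:equivalence_conditions} with $\tilde B\leftarrow\mathcal{L}(B)$, so the implication at the end of that corollary applies and yields directly
\begin{equation*}
\lambda_k(A,\mathcal{L}(B)) \le \lambda_k(A,B), \qquad 1\le k\le n,
\end{equation*}
which is what we wanted.

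There is really no obstacle here: the work has already been done upstream. The only subtlety worth flagging in the write-up is the positive definiteness of $\mathcal{L}(B)$, which is what legitimizes applying Corollary \ref{cor:equivalence_conditions}, and which in turn crucially relies on the absolute values in Definition \ref{def: lumping} (this is the point emphasized in the remark following that definition). If one preferred a more self-contained argument, one could instead apply inequality \eqref{eq: ineq1} of Theorem \ref{th:eig_bounds} directly with the triple $(A,\mathcal{L}(B),B)$ in place of $(A,B,C)$ and then bound $\lambda_n(B,\mathcal{L}(B))\le 1$ via Lemma \ref{lem:spectrum_row_sum_prec}(1), but this merely reproduces the proof of Corollary \ref{cor:equivalence_conditions} inline.
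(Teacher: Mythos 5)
Your proof is correct and follows exactly the paper's route: the paper's own proof is the one-line observation that the result follows from Lemma \ref{lem:spectrum_row_sum_prec} combined with Corollary \ref{cor:equivalence_conditions}. Your additional check that $\mathcal{L}(B)$ is symmetric positive definite (so that Corollary \ref{cor:equivalence_conditions} applies) is a worthwhile detail the paper leaves implicit.
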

\begin{proof}
The result follows immediately from Lemma \ref{lem:spectrum_row_sum_prec} and Corollary \ref{cor:equivalence_conditions}.
\end{proof}

\begin{remark}
Combining Theorem \ref{th:eig_bounds} and Lemma \ref{lem:spectrum_row_sum_prec} also yields the upper bound
\begin{equation*}
    \lambda_k(A,B) \leq \lambda_k(\mathcal{L}(A),B) \qquad 1 \leq k \leq n.
\end{equation*}
In words, lumping the mass matrix leads to an underestimate of the eigenvalues whereas lumping the stiffness matrix leads to an overestimate of the eigenvalues. Clearly, the above inequality is not of any practical interest for our problem and it is only mentioned for completeness.
\end{remark}

In our quest to ensure an underestimate of the eigenvalues, we will heavily rely on the sufficient conditions stated in Corollary \ref{cor:equivalence_conditions}. In fact, the positive semi-definiteness of the error is also the main building block for ad hoc mass scaling strategies \citep{macek1995mass, olovsson2005selective, stoter2022variationally, tkachuk2013variational}. The objectives of mass lumping and mass scaling are similar, however, mass scaling primarily focuses on improving the stability. The issue of solving linear systems with the scaled mass matrix is usually disregarded and at best only addressed in a later design phase. On the contrary, mass lumping addresses both issues simultaneously.

Nevertheless, although the conditions stated in Corollary \ref{cor:equivalence_conditions} are sufficient, they are not necessary, as the following example shows.

\begin{example}
\label{ex: indefinite_error}
Consider the symmetric positive definite matrices
\begin{equation*}
    A=
    \begin{pmatrix}
    6 & 0 \\
    0 & 6
    \end{pmatrix},
    \qquad
    B=
    \begin{pmatrix}
    2 & 1 \\
    1 & 2
    \end{pmatrix},
    \qquad
    \tilde{B}=
    \begin{pmatrix}
    3 & 0 \\
    0 & 2
    \end{pmatrix}.
\end{equation*}
The eigenvalues of $(A,B)$, $(A,\tilde{B})$ and of the error $E=\tilde{B}-B$ are
\begin{align*}
    \lambda_1(A,B)=2, & &\lambda_1(A,\tilde{B})=2, & & \lambda_1(E)=\frac{1-\sqrt{5}}{2}<0, \\
    \lambda_2(A,B)=6, & &\lambda_2(A,\tilde{B})=3, & &\lambda_2(E)=\frac{1+\sqrt{5}}{2}>0.
\end{align*}
Thus, the error $E$ is indefinite and yet $\lambda_k(A,\tilde{B}) \leq \lambda_k(A,B)$ for $k=1,2$. 
\end{example}
The theoretical results are now illustrated on a numerical example.

\begin{example}[Preconditioner comparison]
\label{ex: prec_comp}
We consider an isogeometric discretization of our model problem \eqref{eq: wave_equation} on a stretched square domain already used by previous authors \citep{gao2014fast, loli2021easy}. Quadratic B-splines of maximal smoothness are used with 20 subdivisions in each parametric direction. All our experiments are done with GeoPDEs \cite{vazquez2016new}, an open source Matlab/Octave package for isogeometric analysis. The perturbed spectrum $\Lambda(K,\tilde{M})$ is compared for two different choices of $\tilde{M}$, namely the lumped mass matrix introduced in Definition \ref{def: lumping} and a very good preconditioner developed by Loli et al. \cite{loli2021easy}, which is provably optimal under mesh refinement. The interested reader is referred to the original article for its definition and construction. This experiment has two purposes: firstly, it illustrates the close connection between the quality of a preconditioner and the approximation of the eigenvalues. Secondly, it supports our first theoretical finding regarding mass lumping and pinpoints some characteristic features so far not captured by our bounds.

The eigenvalue ratios together with their lower and upper bounds from \eqref{eq:eig_pert_bounds_mass} are shown in Figure \ref{fig: 2D_Laplace_stretched_square_eig_ratio_LM_vs_Loli_p2_n20} for the lumped mass matrix and for the preconditioner of Loli et al. In Figure \ref{fig: 2D_Laplace_stretched_square_eig_LM_vs_Loli_p2_n20}, the exact discrete spectrum is compared to the approximate spectrum obtained with our two choices of preconditioners. The excellent preconditioner of Loli et al. leads to a nearly perfect approximation of the entire spectrum while mass lumping leads to an underestimate of the eigenvalues, in agreement with Corollary \ref{cor:eig_interlacing}.

\begin{center}
\begin{minipage}[t]{.48\linewidth}
\vspace{0pt}
\begin{figure}[H]
    \centering
    \includegraphics[width=\textwidth]{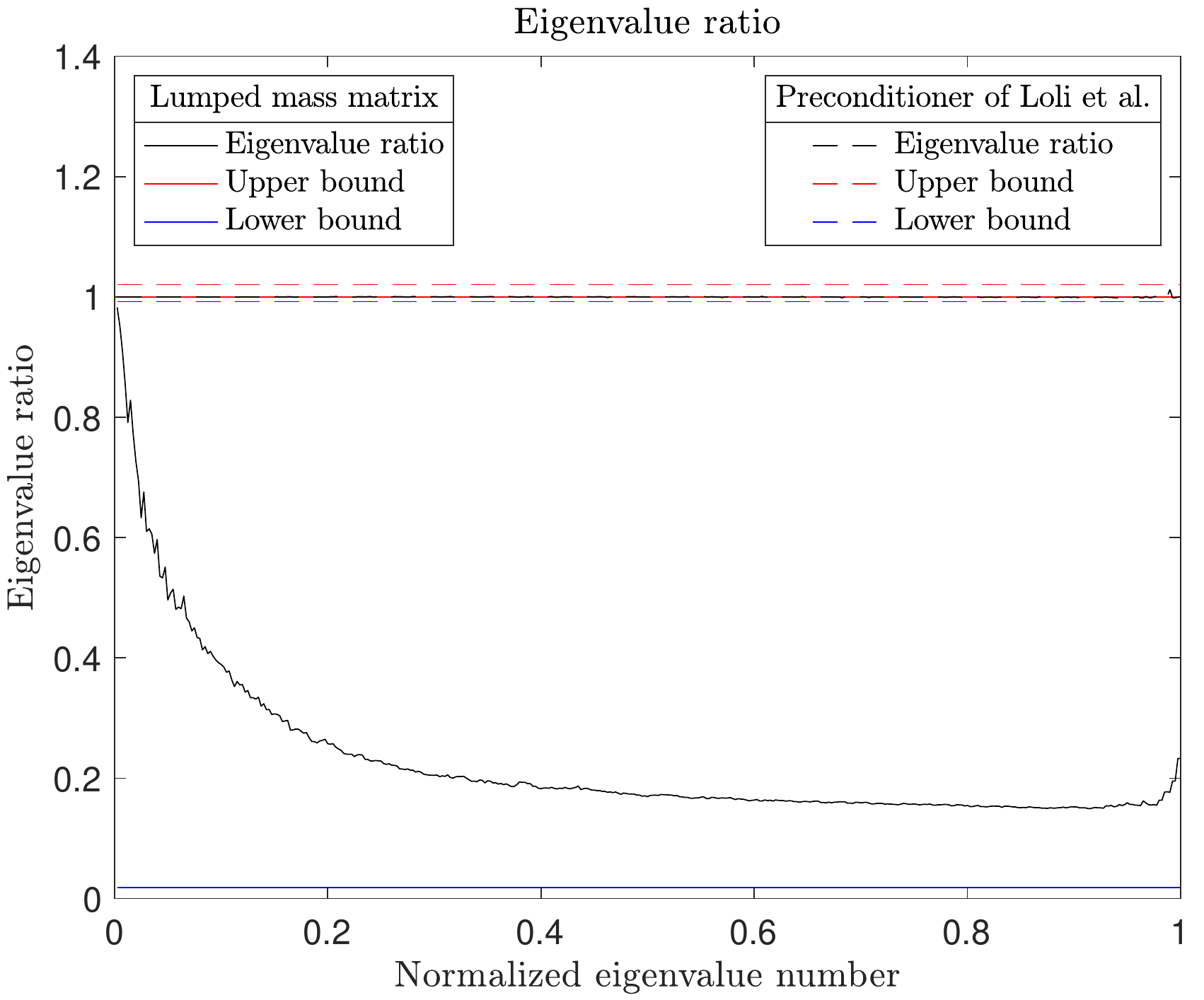}
    \caption{Eigenvalue ratios $\frac{\lambda_k(K,\tilde{M})}{\lambda_k(K,M)}$ when using the lumped mass matrix and the preconditioner of Loli et al. \cite{loli2021easy}}
    \label{fig: 2D_Laplace_stretched_square_eig_ratio_LM_vs_Loli_p2_n20}
\end{figure}
\end{minipage}
\hspace{2pt}
\begin{minipage}[t]{.48\linewidth}
\vspace{0pt}
\begin{figure}[H]
    \centering
    \includegraphics[width=\textwidth]{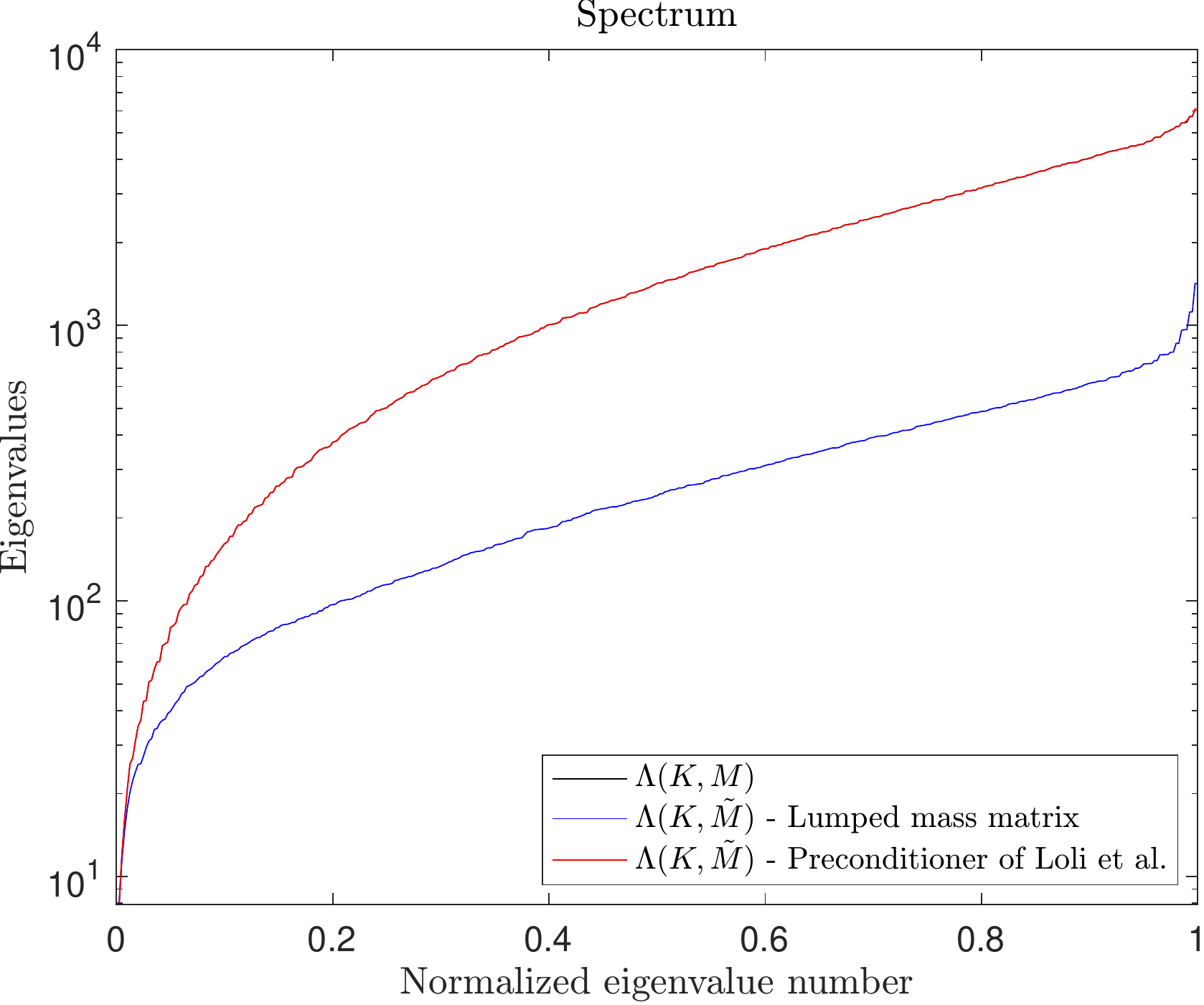}
    \caption{Comparison between $\Lambda(K,M)$ and $\Lambda(K,\tilde{M})$ obtained using the lumped mass matrix and the preconditioner of Loli et al. \cite{loli2021easy}}
    \label{fig: 2D_Laplace_stretched_square_eig_LM_vs_Loli_p2_n20}
\end{figure}
\end{minipage}
\end{center}
\end{example}

Example \ref{ex: prec_comp} also illustrates a typical and widely appreciated feature of mass lumping: In Figure \ref{fig: 2D_Laplace_stretched_square_eig_ratio_LM_vs_Loli_p2_n20}, the smallest eigenvalues are reasonably well approximated, whereas the larger eigenvalues are strongly underestimated. It explains why mass lumping does not lead to completely erroneous approximate solutions. This feature is explored more thoroughly in a second and simpler example.

\begin{example}[$h$-refinement for 1D Laplace]
We perform an $h$-refinement experiment for the isogeometric discretization of the 1D Laplace eigenvalue problem on the unit line with homogeneous Dirichlet boundary conditions. The relative eigenvalue error $\frac{\lambda_i-\tilde{\lambda}_i}{\lambda_i}$ for the first 20 discrete eigenvalues is shown in Figures \ref{fig: 1D_Laplace_relative_err_eig20_h_refinement_p6} for increasingly fine meshes. The spline order is fixed at $p=6$. As expected, for a given spline degree the relative error decreases for increasingly fine meshes but is not even across all eigenvalues: the smallest eigenvalues are always much better approximated.

\begin{figure}[H]
    \centering
    \includegraphics[scale=0.5]{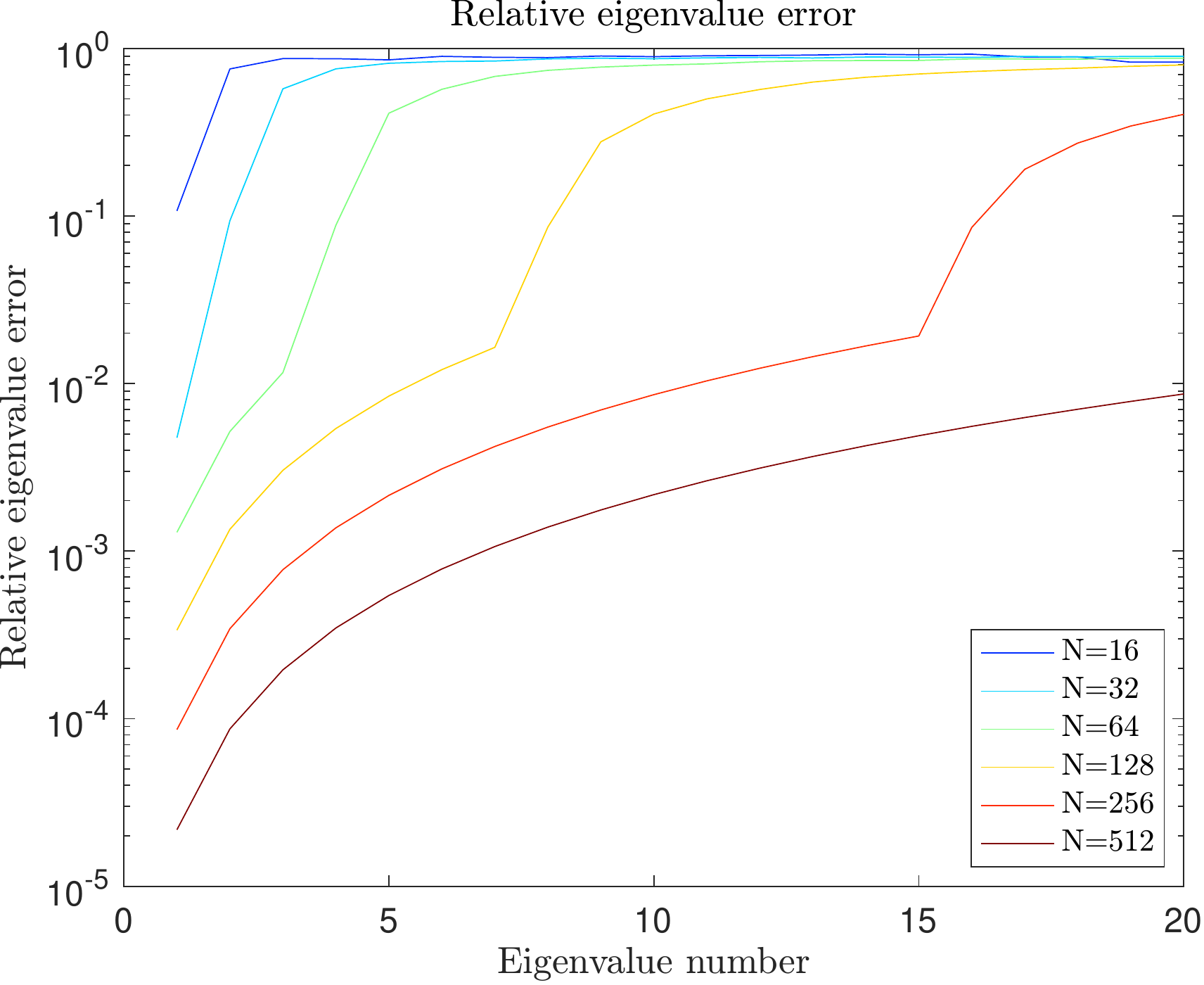}
    \caption{Relative eigenvalue error for the first 20 discrete eigenvalues and $p=6$}
    \label{fig: 1D_Laplace_relative_err_eig20_h_refinement_p6}
\end{figure}
\end{example}

The reasons why mass lumping gives a much better approximation of the smallest eigenvalues can be intuitively explained with the perturbation theory of generalized eigenvalue problems pioneered by Stewart in the early 1970s. For generalized eigenvalue problems, the eigenvalue error is often measured in the chordal metric \citep{stewart1990matrix, stewart1975gershgorin, stewart1979pertubation, sun1982note, crawford1976stable} because its ability to treat small and large eigenvalues uniformly. Error bounds are also alternatively formulated in terms of \textit{eigenangles}, which were first introduced by Stewart in 1979 \cite{stewart1979pertubation} and are related to eigenvalues by
\begin{equation*}
    \theta_i=\arccot(\lambda_i) \qquad 1 \leq i \leq n.
\end{equation*}
Similarly to eigenvalues, eigenangles satisfy a min-max characterization, which is the basis for deriving error bounds on eigenangles. The classical perturbation theory uniformly bounds the quantity $\sin(|\tilde{\theta}_i-\theta_i|)$ by the ratio of some error measure over the definiteness of the pencil. Due to the highly nonlinear nature of the cotangent function, large eigenvalues undergo large perturbations while small eigenvalues undergo small perturbations, which is consistent with the results in Figure \ref{fig: 2D_Laplace_stretched_square_eig_LM_vs_Loli_p2_n20}. Quantitatively speaking though, the perturbations bounds from \cite{stewart1979pertubation} seemed sharper for the smallest eigenvalues than for the largest ones in our numerical experiments. The classical perturbation theory may not be satisfactory for our problem for two main reasons. Firstly, the error $E$ is treated as a random symmetric perturbation whereas in our problem the lumped mass matrix is constructed from the consistent mass matrix and therefore the error inherits some structure, which is ignored in the classical perturbation theory. Secondly, all error bounds we are aware of are \textit{uniform}, meaning the error (in whichever measure is used) is bounded independently of the eigenvalue/eigenangle number similarly to Weyl’s theorem for standard eigenvalue problems \citep[][Equation (6.3.4.1)]{horn2012matrix}. Such bounds cannot explain differences in relative error as we are witnessing in Figure \ref{fig: 1D_Laplace_relative_err_eig20_h_refinement_p6}. Instead, focus should be directed at understanding how the error is interacting with the eigenspace. Surprisingly little attention has been paid to this problem and the few results we are aware of are limited to standard eigenvalue problems \cite{ipsen2009refined}. Such error bounds are not only valuable for mass lumping but also for selective mass scaling, where specific eigenvalues are selectively driven down \cite{olovsson2005selective}. Perturbation bounds that take into account eigenspaces are established in Theorem \ref{th:Bauer_Fike}.

\begin{theorem}
\label{th:Bauer_Fike}
Let $(A,B)$ be a symmetric matrix pencil with $B$ positive definite. Let $(\tilde{A},\tilde{B})=(A+E,B+F)$, where $E,F \in \mathbb{R}^{n \times n}$ are symmetric perturbations and $\tilde{B}$ is positive definite. Let $\{(\lambda_i, \mathbf{u}_i)\}_{i=1}^n$ and $\{(\tilde{\lambda}_i, \tilde{\mathbf{u}}_i)\}_{i=1}^n$ denote the eigenpairs of $(A,B)$ and $(\tilde{A}, \tilde{B})$, respectively, where the eigenvectors $\mathbf{u}_i$ and $\tilde{\mathbf{u}}_i$ are assumed $B$-orthonormal and $\tilde{B}$-orthonormal, respectively. Then
\begin{subequations}
\begin{align}
    &\min_j |\tilde{\lambda}_j-\lambda_i| \leq \frac{1}{\lambda_1(\tilde{B})}\left(\frac{\|E\mathbf{u}_i\|_2}{\|\mathbf{u}_i\|_2} +|\lambda_i| \frac{\|F\mathbf{u}_i\|_2}{\|\mathbf{u}_i\|_2}\right) \qquad 1 \leq i \leq n, \label{eq: Bauer_Fike_1} \\
    &\min_j |\lambda_j-\tilde{\lambda}_i| \leq \frac{1}{\lambda_1(B)}\left(\frac{\|E\tilde{\mathbf{u}}_i\|_2}{\|\tilde{\mathbf{u}}_i\|_2} +|\tilde{\lambda}_i| \frac{\|F\tilde{\mathbf{u}}_i\|_2}{\|\tilde{\mathbf{u}}_i\|_2}\right) \qquad 1 \leq i \leq n. \label{eq: Bauer_Fike_2}
\end{align}
\end{subequations}
\end{theorem}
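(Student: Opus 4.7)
The natural strategy is a generalized Bauer--Fike argument. Start from the unperturbed eigenrelation $A\mathbf{u}_i = \lambda_i B \mathbf{u}_i$ and add/subtract the perturbations to rewrite the residual of $\mathbf{u}_i$ with respect to the perturbed pencil:
\begin{equation*}
(\tilde{A} - \lambda_i \tilde{B})\mathbf{u}_i = (A+E)\mathbf{u}_i - \lambda_i (B+F)\mathbf{u}_i = E\mathbf{u}_i - \lambda_i F\mathbf{u}_i.
\end{equation*}
If $\lambda_i \in \Lambda(\tilde{A},\tilde{B})$ the left-hand side of \eqref{eq: Bauer_Fike_1} vanishes and there is nothing to prove, so assume $\tilde{A}-\lambda_i\tilde{B}$ is invertible. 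Solving for $\mathbf{u}_i$ and taking Euclidean norms yields
\begin{equation*}
\|\mathbf{u}_i\|_2 \leq \|(\tilde{A}-\lambda_i\tilde{B})^{-1}\|_2 \, \|E\mathbf{u}_i - \lambda_i F\mathbf{u}_i\|_2,
\end{equation*}
so the whole game reduces to an upper bound on $\|(\tilde{A}-\lambda_i\tilde{B})^{-1}\|_2$ that exposes $\min_j|\tilde{\lambda}_j-\lambda_i|$.

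To obtain this bound I would diagonalize the perturbed pencil using the $\tilde{B}$-orthonormal eigenbasis: let $\tilde{U}=[\tilde{\mathbf{u}}_1,\dots,\tilde{\mathbf{u}}_n]$, so that $\tilde{U}^T\tilde{A}\tilde{U}=\tilde{\Lambda}$ and $\tilde{U}^T\tilde{B}\tilde{U}=I$ by Lemma~\ref{lem:classical_problem}. Then $\tilde{A}-\lambda_i\tilde{B}=\tilde{U}^{-T}(\tilde{\Lambda}-\lambda_i I)\tilde{U}^{-1}$, hence
\begin{equation*}
(\tilde{A}-\lambda_i\tilde{B})^{-1} = \tilde{U}(\tilde{\Lambda}-\lambda_i I)^{-1}\tilde{U}^T
\qquad\Longrightarrow\qquad
\|(\tilde{A}-\lambda_i\tilde{B})^{-1}\|_2 \leq \|\tilde{U}\|_2^2 \cdot \frac{1}{\min_j|\tilde{\lambda}_j-\lambda_i|}.
\end{equation*}
The crucial step is recognizing that $\tilde{U}^T\tilde{B}\tilde{U}=I$ is equivalent to $\tilde{U}\tilde{U}^T = \tilde{B}^{-1}$, so $\|\tilde{U}\|_2^2 = \|\tilde{B}^{-1}\|_2 = 1/\lambda_1(\tilde{B})$. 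This identity is what converts the usual Bauer--Fike condition-number factor into the clean $1/\lambda_1(\tilde{B})$ appearing in the statement; I expect this to be the main (though short) obstacle, because the temptation is to carry around a condition number of $\tilde{U}$ instead.

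Combining the two inequalities and rearranging gives
\begin{equation*}
\min_j |\tilde{\lambda}_j - \lambda_i| \leq \frac{1}{\lambda_1(\tilde{B})} \cdot \frac{\|E\mathbf{u}_i - \lambda_i F\mathbf{u}_i\|_2}{\|\mathbf{u}_i\|_2},
\end{equation*}
and the triangle inequality $\|E\mathbf{u}_i-\lambda_i F\mathbf{u}_i\|_2 \leq \|E\mathbf{u}_i\|_2 + |\lambda_i|\,\|F\mathbf{u}_i\|_2$ finishes \eqref{eq: Bauer_Fike_1}. The second bound \eqref{eq: Bauer_Fike_2} follows by the same argument with the roles of the two pencils swapped: starting from $\tilde{A}\tilde{\mathbf{u}}_i=\tilde{\lambda}_i\tilde{B}\tilde{\mathbf{u}}_i$ produces $(A-\tilde{\lambda}_i B)\tilde{\mathbf{u}}_i = -E\tilde{\mathbf{u}}_i + \tilde{\lambda}_i F\tilde{\mathbf{u}}_i$, and the $B$-orthonormality of the $\mathbf{u}_i$ plays the same role that $\tilde{B}$-orthonormality played above, yielding $\|U\|_2^2 = 1/\lambda_1(B)$.
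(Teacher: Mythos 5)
Your proposal is correct and follows essentially the same route as the paper's proof: rewrite the residual $(\tilde{A}-\lambda_i\tilde{B})\mathbf{u}_i=(E-\lambda_i F)\mathbf{u}_i$, bound $\|(\tilde{A}-\lambda_i\tilde{B})^{-1}\|_2$ via the $\tilde{B}$-orthonormal diagonalization, and use $\|\tilde{U}\|_2^2=\|\tilde{B}^{-1}\|_2=1/\lambda_1(\tilde{B})$ before applying the triangle inequality. The only cosmetic difference is that the paper first states the bound for a generic shift $\sigma$ and vector $\mathbf{x}$ and then specializes, whereas you specialize from the outset; both handle the degenerate case $\lambda_i\in\Lambda(\tilde{A},\tilde{B})$ correctly.
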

\begin{proof}
We prove statement \eqref{eq: Bauer_Fike_1} by first noticing that for any given vector $\mathbf{x} \in \mathbb{R}^n$ and any given scalar $\sigma \in \mathbb{R} \setminus \Lambda(\tilde{A},\tilde{B})$, $\mathbf{x}=(\tilde{A}-\sigma \tilde{B})^{-1}(\tilde{A}-\sigma \tilde{B})\mathbf{x}$. Since $\tilde{B}$ is positive definite, there exists a matrix $\tilde{U} \in \mathbb{R}^{n \times n}$ of $\tilde{B}$-orthonormal eigenvectors that diagonalizes $(\tilde{A},\tilde{B})$ such that $\tilde{U}^T\tilde{A}\tilde{U}=\tilde{D}$ and $\tilde{U}^T\tilde{B}\tilde{U}=I$, where $\tilde{D}$ is the diagonal matrix of eigenvalues of $(\tilde{A},\tilde{B})$ (Lemma \ref{lem:classical_problem}). Then, by taking the norm, we obtain
\begin{equation*}
    \|\mathbf{x}\|_2 \leq \|(\tilde{A}-\sigma \tilde{B})^{-1}\|_2\|(\tilde{A}-\sigma \tilde{B})\mathbf{x}\|_2 = \|\tilde{U}(\tilde{D}-\sigma I)^{-1}\tilde{U}^T\|_2\|(\tilde{A}-\sigma \tilde{B})\mathbf{x}\|_2 \leq \|\tilde{U}\|_2^2 \frac{\|(\tilde{A}-\sigma \tilde{B})\mathbf{x}\|_2}{\min_j |\tilde{\lambda}_j-\sigma|}.
\end{equation*}
Noticing that $\|\tilde{U}\|_2^2=\|\tilde{B}^{-1}\|_2=\frac{1}{\lambda_1(\tilde{B})}$, we obtain
\begin{equation*}
    \min_j |\tilde{\lambda}_j-\sigma| \leq \frac{1}{\lambda_1(\tilde{B})}\frac{\|(\tilde{A}-\sigma \tilde{B})\mathbf{x}\|_2}{\|\mathbf{x}\|_2}.
\end{equation*}
In particular, the inequality trivially holds for any $\sigma \in \Lambda(\tilde{A},\tilde{B})$. After choosing $\sigma=\lambda_i$ and $\mathbf{x}=\mathbf{u}_i$, we obtain a residual bound. Since $A\mathbf{u}_i=\lambda_i B\mathbf{u}_i$, we finally obtain
\begin{equation*}
    \min_j |\tilde{\lambda}_j-\lambda_i| \leq \frac{1}{\lambda_1(\tilde{B})}\frac{\|(\tilde{A}-\lambda_i \tilde{B})\mathbf{u}_i\|_2}{\|\mathbf{u}_i\|_2} = \frac{1}{\lambda_1(\tilde{B})}\frac{\|(\tilde{A}-A+\lambda_i(B-\tilde{B}))\mathbf{u}_i\|_2}{\|\mathbf{u}_i\|_2} \leq \frac{1}{\lambda_1(\tilde{B})}\left(\frac{\|E\mathbf{u}_i\|_2}{\|\mathbf{u}_i\|_2} +|\lambda_i| \frac{\|F\mathbf{u}_i\|_2}{\|\mathbf{u}_i\|_2}\right).
\end{equation*}
Statement \eqref{eq: Bauer_Fike_2} follows by swapping the roles of $(A,B)$ and $(\tilde{A},\tilde{B})$.
\end{proof} 
\begin{remark}
Theorem \ref{th:Bauer_Fike}, statement \eqref{eq: Bauer_Fike_1} is a generalization of a uniform bound on eigenvalues \citep[][Theorem 2]{crawford1976stable}, which appeared earlier in \cite{crawford1970numerical}.
\begin{equation}
    |\tilde{\lambda}_i-\lambda_i| \leq \frac{1}{\lambda_1(\tilde{B})}\left(\|E\|_2 +|\lambda_i|\|F\|_2\right) \qquad 1 \leq i \leq n. \label{eq: Crawford_1976}
\end{equation}
We also note that the pre-multiplicative factors $\frac{1}{\lambda_1(\tilde{B})}$ and $\frac{1}{\lambda_1(B)}$ can be removed by considering the error in the norms induced by $\tilde{B}^{-1}$ and $B^{-1}$, respectively \cite[][Theorem 15.9.1]{parlett1998symmetric}.
\end{remark}
A direct comparison between \eqref{eq: Bauer_Fike_1} and \eqref{eq: Crawford_1976} is generally not possible since they do not bound the same quantities. Nevertheless, if the eigenvalue gap is large enough relative to the error, then the perturbed eigenvalue that attains the minimum in \eqref{eq: Bauer_Fike_1} is actually the $i$th one \cite{ipsen2009refined}. In this case, Theorem \ref{th:Bauer_Fike} is a clear improvement. In the context of mass lumping, the error component in the eigenspaces associated to the smallest eigenvalues tends to be quite small. Moreover, for sufficiently fine meshes, the discrete eigenvalues are sufficiently close to the exact eigenvalues, which are well separated for 1D problems. Thus, Theorem \ref{th:Bauer_Fike} might very well directly provide an upper bound on the relative error for the smallest eigenvalues. This assertion is tested in the next example.

\begin{example}
The quality of the upper bounds given in \eqref{eq: Bauer_Fike_1} and \eqref{eq: Crawford_1976} is assessed numerically for a first order isogeometric discretization of the 1D Laplace. Figure \ref{fig: 1D_Laplace_ML_P1_rel_error_p1_n100_upper_comp} indicates that for the first few eigenvalues, $\tilde{\lambda}_i$ is the perturbed eigenvalue closest to $\lambda_i$. However, it is no longer the case for the upper 82 \% of the spectrum. The error grows faster than the relative gap and the pairing is lost. From Figure \ref{fig: 1D_Laplace_ML_P1_rel_error_p1_n100_upper_comp}, the reader might misleadingly think that the upper bound in \eqref{eq: Bauer_Fike_1} is actually \textit{always} an upper bound on the relative error between $\lambda_i$ and $\tilde{\lambda}_i$, regardless of the eigenvalue gap. Unfortunately, other experiments showed the contrary. The results using the upper bound in \eqref{eq: Bauer_Fike_2} were quite similar and are omitted.

\begin{figure}[H]
    \centering
    \includegraphics[scale=0.5]{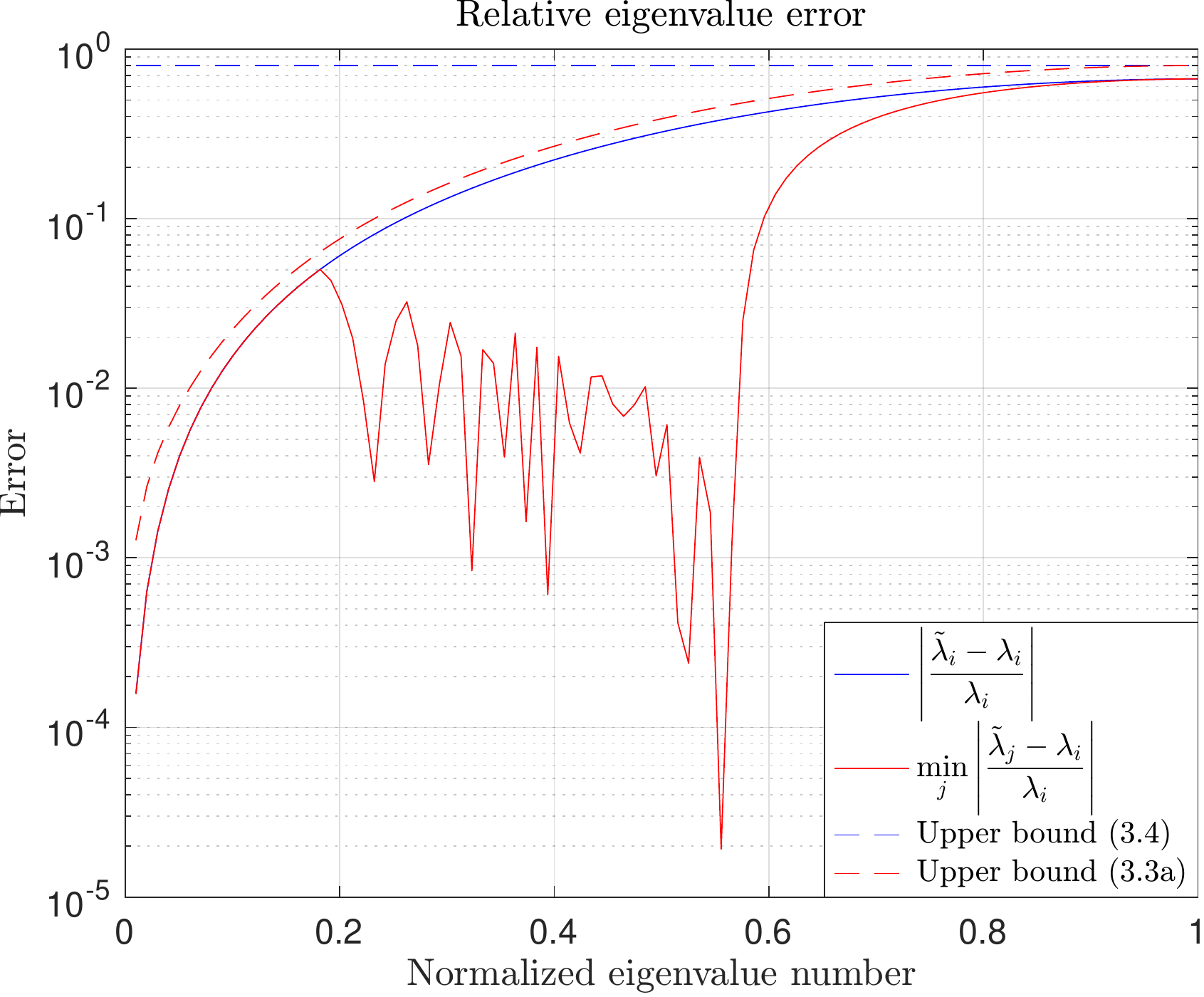}
    \caption{Comparison of \eqref{eq: Bauer_Fike_1} and \eqref{eq: Crawford_1976} for the row-sum lumped mass matrix and $p=1$}
    \label{fig: 1D_Laplace_ML_P1_rel_error_p1_n100_upper_comp}
\end{figure}
\end{example}

\subsection{A generalization of mass lumping}
For applications in explicit dynamics, the different approximation quality at different branches of the spectrum is an attractive feature of mass lumping. A good approximation of the smallest eigenvalues allows us to recover reasonably good solutions whereas a rather poor and gross underestimate of the largest eigenvalues, which usually do not contribute much to the solution for linear problems, improves the stability of the time integration scheme. We will now design an entire class of preconditioners preserving those nice properties and generalizing the simple row-sum lumped mass matrix to more complex structures. A definition and a small preparatory lemma are needed before describing this class.

\begin{definition}[Singular matrix pencil]
\label{def: singular_pencil}
A matrix pencil $(A,B)$ is called singular if $\det(A-\lambda B)=0$ for all $\lambda \in \mathbb{C}$.
\end{definition}

Clearly, matrix pencils formed by matrices $A$ and $B$ whose null space intersects nontrivially are necessarily singular.

\begin{lemma}
\label{lem:negative_semidefiniteness}
Let $B \in \mathbb{R}^{n \times n}$ be a symmetric matrix. Then 
\begin{equation*}
    -\mathcal{L}(B) \preceq B \preceq \mathcal{L}(B).
\end{equation*}
Moreover, 
\begin{enumerate}[noitemsep]
    \item If $B$ does not contain a row of zeros, then $\Lambda(B, \mathcal{L}(B)) \subset [-1,1]$,
    \item If $B$ is nonnegative, then $\dim \ker(B-\mathcal{L}(B)) \geq 1$,
\end{enumerate}
\end{lemma}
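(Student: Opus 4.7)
\medskip
\noindent\textbf{Proof proposal.} The plan is to attack the Loewner sandwich first by working directly with the associated quadratic form, then to derive the two enumerated consequences from it.

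For the main inequalities, I will show that $\mathcal{L}(B) \pm B \succeq 0$ by computing $\mathbf{x}^T(\mathcal{L}(B) \pm B)\mathbf{x}$ for an arbitrary $\mathbf{x} \in \mathbb{R}^n$ and exhibiting it as a sum of manifestly nonnegative terms. Writing
\begin{equation*}
    \mathbf{x}^T \mathcal{L}(B)\mathbf{x} = \sum_{i,j} |b_{ij}|\, x_i^2
\end{equation*}
and using $|b_{ij}| = |b_{ji}|$ to symmetrize into $\tfrac{1}{2}\sum_{i,j}|b_{ij}|(x_i^2+x_j^2)$, while leaving $\mathbf{x}^T B\mathbf{x} = \sum_{i,j} b_{ij} x_i x_j$ as is, one obtains
\begin{equation*}
    \mathbf{x}^T(\mathcal{L}(B) \pm B)\mathbf{x} = \tfrac{1}{2} \sum_{i,j} \bigl[\, |b_{ij}|(x_i^2 + x_j^2) \pm 2 b_{ij} x_i x_j \,\bigr].
\end{equation*}
Each summand is nonnegative by the elementary estimate $|b_{ij}|(x_i^2+x_j^2) \geq 2|b_{ij}|\,|x_i x_j| \geq |2 b_{ij} x_i x_j|$, giving $-\mathcal{L}(B) \preceq B \preceq \mathcal{L}(B)$.

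For item 1, I note that if no row of $B$ is identically zero, then every diagonal entry $d_i = \sum_j |b_{ij}|$ of $\mathcal{L}(B)$ is strictly positive, so $\mathcal{L}(B)$ is symmetric positive definite and the pencil $(B,\mathcal{L}(B))$ falls into the framework of Lemma \ref{lem:classical_problem}. Then the Courant--Fischer min-max characterization gives
\begin{equation*}
    \lambda_k(B, \mathcal{L}(B)) = \min_{\substack{\mathcal{S}\subseteq \mathbb{R}^n \\ \dim \mathcal{S}=k}} \max_{\substack{\mathbf{x}\in\mathcal{S}\\ \mathbf{x}\neq \mathbf{0}}} \frac{\mathbf{x}^T B \mathbf{x}}{\mathbf{x}^T \mathcal{L}(B)\mathbf{x}},
\end{equation*}
and the Loewner inequalities just proved force the Rayleigh quotient in the right-hand side to lie in $[-1,1]$, so $\Lambda(B,\mathcal{L}(B)) \subset [-1,1]$. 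For item 2, I would simply observe that if $B$ is nonnegative then the absolute values in Definition \ref{def: lumping} are redundant and $\mathcal{L}(B)_{ii} = \sum_j b_{ij}$ is exactly the $i$th row sum; hence $\mathcal{L}(B)\mathbf{e} = B\mathbf{e}$ for $\mathbf{e} = (1,\dots,1)^T$, so $\mathbf{e} \in \ker(B-\mathcal{L}(B))$ and the kernel has dimension at least one.

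The only delicate point is the symmetrization step in the first part: one must be careful to use the symmetry of $B$ (so that $b_{ij} x_i x_j$ already appears symmetrically in the quadratic form), combined with the trivial symmetry $|b_{ij}| = |b_{ji}|$, in order to write both pieces with the common factor $\tfrac{1}{2}(x_i^2+x_j^2)$ and then invoke the AM--GM-type bound termwise. Everything else is either a direct consequence of the Loewner sandwich or a verification on the all-ones vector.
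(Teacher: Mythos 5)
Your proof is correct, and it takes a genuinely different route from the paper's. The paper first establishes the spectral containment $\Lambda(B,\mathcal{L}(B))\subset[-1,1]$ via the bound $\|\mathcal{L}(B)^{-1}B\|_\infty=1$ (reusing the mechanism of Lemma \ref{lem:spectrum_row_sum_prec}), and only then deduces the Loewner sandwich from the Courant--Fischer characterization; because that argument needs $\mathcal{L}(B)$ to be invertible, the paper must treat the case where $B$ has rows of zeros separately, via a block decomposition that isolates the zero rows and columns. You reverse the logical order: you prove $\mathcal{L}(B)\pm B\succeq 0$ directly by symmetrizing the quadratic form into the termwise-nonnegative sum $\tfrac12\sum_{i,j}\bigl[\,|b_{ij}|(x_i^2+x_j^2)\pm 2b_{ij}x_ix_j\,\bigr]$, and then recover the spectral containment from the sandwich via Courant--Fischer in the regular case. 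Your direction has a real advantage: the quadratic-form argument makes no invertibility assumption, so the Loewner inequalities hold for every symmetric $B$ at once and the paper's entire zero-rows case analysis becomes unnecessary. What the paper's route buys in exchange is a uniform treatment: the same $\|\mathcal{L}(B)^{-1}B\|_\infty$ computation serves both Lemma \ref{lem:spectrum_row_sum_prec} and this lemma. Your handling of items 1 and 2 matches the paper's in substance (positive definiteness of $\mathcal{L}(B)$ when no row vanishes, and the all-ones vector lying in $\ker(B-\mathcal{L}(B))$ for nonnegative $B$).
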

\begin{proof}
Since we do not assume $B$ to be positive definite, we run the risk of encountering singular matrix pencils (see Definition \ref{def: singular_pencil}). In general, $\ker(\mathcal{L}(B)) \subseteq \ker(B)$ and $\dim \ker(\mathcal{L}(B))>0$ if and only if $B$ contains a row of zeros. Thus, if $B$ contains a row of zeros, then the null spaces of $B$ and $\mathcal{L}(B)$ have a nontrivial intersection and the matrix pencil $(B, \mathcal{L}(B))$ is singular: generalized eigenvalues associated to an eigenvector $\mathbf{y} \in \ker(B) \cap \ker(\mathcal{L}(B))$ ($\mathbf{y} \neq \mathbf{0}$) are arbitrary. We first study the regular case by assuming $B$ does not contain a row of zeros and later extend the argument to the more general case. If $B$ does not contain a row of zeros, then the lumping operator still delivers a symmetric positive definite matrix $\mathcal{L}(B)$. Thus, $\Lambda(B, \mathcal{L}(B)) \subset \mathbb{R}$. Now, similarly to Lemma \ref{lem:spectrum_row_sum_prec}, we obtain
\begin{equation*}
    |\lambda_k(B, \mathcal{L}(B))| = |\lambda_k(\mathcal{L}(B)^{-1}B)| \leq \|\mathcal{L}(B)^{-1}B\|_{\infty}=1
\end{equation*}
and consequently $\Lambda(B, \mathcal{L}(B)) \subset [-1,1]$. Moreover, since $B$ is symmetric, the Courant-Fischer theorem still holds. Thus, $\forall \mathbf{u} \in \mathbb{R}^n, \mathbf{u} \neq \mathbf{0}$
\begin{equation*}
    -1 \leq \lambda_1(B, \mathcal{L}(B)) = \min_{\substack{ \mathbf{x} \in \mathbb{R}^n \\ \mathbf{x} \neq \mathbf{0}}} \frac{\mathbf{x}^T B\mathbf{x}}{\mathbf{x}^T \mathcal{L}(B) \mathbf{x}} \leq \frac{\mathbf{u}^T B\mathbf{u}}{\mathbf{u}^T \mathcal{L}(B) \mathbf{u}} \leq \max_{\substack{ \mathbf{x} \in \mathbb{R}^n \\ \mathbf{x} \neq \mathbf{0}}} \frac{\mathbf{x}^T B\mathbf{x}}{\mathbf{x}^T \mathcal{L}(B) \mathbf{x}} = \lambda_n(B, \mathcal{L}(B)) \leq 1.
\end{equation*}
Consequently, $-\mathcal{L}(B) \preceq B \preceq \mathcal{L}(B)$. In particular, if $B$ is nonnegative, $\dim \ker(B-\mathcal{L}(B)) \geq 1$ since $(B-\mathcal{L}(B))\mathbf{e}=\mathbf{0}$ where $\mathbf{e}$ is the vector of all ones.

We now extend the proof to the more general case. Let us assume that $B$ contains $m$ rows of zeros. Since $B$ is symmetric, if the $i$th row is zero, so is the $i$th column. Without loss of generality, we may assume that the rows and columns of $B$ are ordered such that
\begin{equation*}
B=
\begin{pmatrix}
C & 0 \\
0 & 0
\end{pmatrix}
\qquad
\mathcal{L}(B)=
\begin{pmatrix}
\mathcal{L}(C) & 0 \\
0 & 0
\end{pmatrix}
\end{equation*}
where $C$ is symmetric and does not contain any row of zeros. The previous proof argument can be used to show that $\Lambda(C, \mathcal{L}(C)) \subset [-1, 1]$ and $-\mathcal{L}(C) \preceq C \preceq \mathcal{L}(C)$. Consequently, $-\mathcal{L}(B) \preceq B \preceq \mathcal{L}(B)$ still holds and the zero rows only increase the dimension of $\ker(B-\mathcal{L}(B))$. If $B$ is the zero matrix, the result trivially holds.
\end{proof}

In some sense, if $B$ is nonnegative, Lemma \ref{lem:negative_semidefiniteness} states that the ``upper bound'' $B \preceq \mathcal{L}(B)$ is ``attained'' in the Loewner ordering. Namely, the error $\mathcal{L}(B)-B$ is not only positive semidefinite but also singular. For the sake of eigenvalue approximation, we are especially interested in singular errors. In fact, positive definite errors will necessarily effect the smallest generalized eigenvalues, which we of course want to avoid.

\begin{definition}[Preconditioners $P_i$]
\label{def:preconditioners_Pi}
Let $B \in \mathbb{R}^{n \times n}$ be a symmetric positive definite matrix and consider the matrix splitting $B=D_i+R_i$ where $D_i$ consists of all super and sub-diagonals smaller than $i$ and $R_i$ is the remainder. We define the sequence of preconditioners $P_i=D_i+\mathcal{L}(R_i)$ for $i=1,\dots,n$. In particular, we observe that $P_1=\mathcal{L}(B)$ and $P_n=B$.
\end{definition}

We are now ready to state an important theorem generalizing Lemma \ref{lem:spectrum_row_sum_prec}.

\begin{theorem}
\label{th:family_lumped_mass_prec}
Let $B \in \mathbb{R}^{n \times n}$ be a symmetric positive definite matrix. Then, the sequence of preconditioners $\{P_i\}_{i=1}^n$ constructed from $B$ according to Definition \ref{def:preconditioners_Pi} satisfies the following properties:
\begin{enumerate}[noitemsep]
    \item $\Lambda(B, P_i) \subset (0,1]$ for all $i=1,\dots,n$,
    \item $\lambda_k(B, P_i) \leq \lambda_k(B, P_{i+1})$ for all $k=1,\dots,n$ and any given $i=1,\dots, n-1$,
    \item If $B$ is nonnegative, then $\lambda_n(B, P_i)=1$ for all $i=1,\dots,n$.
\end{enumerate}
\end{theorem}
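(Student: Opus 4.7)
The plan is to reduce each of the three claims to a Loewner-order comparison between consecutive preconditioners and then invoke the machinery already built up in Lemma~\ref{lem:negative_semidefiniteness} and Corollary~\ref{cor:equivalence_conditions}. The common underlying observation is that all three conclusions follow once one controls the matrix differences $P_i - B$ and $P_{i+1} - P_i$ with the lumping operator applied to well-chosen off-diagonal blocks. I expect the bookkeeping for the telescoping step in property~2 to be the only slightly delicate point, because $\mathcal{L}$ is nonlinear (it takes absolute values), and the argument depends on exploiting disjoint supports.

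For property~1, I would write $P_i = D_i + \mathcal{L}(R_i) = B + (\mathcal{L}(R_i) - R_i)$. Since $R_i$ is symmetric, Lemma~\ref{lem:negative_semidefiniteness} yields $R_i \preceq \mathcal{L}(R_i)$, so $P_i \succeq B \succ 0$, which shows $P_i$ is symmetric positive definite. Applying Corollary~\ref{cor:equivalence_conditions} with $A = B$, $\tilde B = P_i$ and $B$ (in the corollary's notation) equal to our $B$ then gives $\Lambda(B,P_i) \subset (0,1]$.

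For property~2, let $E_i := D_{i+1} - D_i$ be the matrix consisting of the $i$th super- and sub-diagonals of $B$. Then $R_{i+1} = R_i - E_i$, and because the nonzero patterns of $R_{i+1}$ and $E_i$ are disjoint (one supported on $|j-k|\geq i+1$, the other on $|j-k| = i$), the row-wise sums of absolute values decouple, giving $\mathcal{L}(R_i) = \mathcal{L}(R_{i+1}) + \mathcal{L}(E_i)$. Substituting this identity yields the clean telescoping formula
\begin{equation*}
P_{i+1} - P_i = E_i - \mathcal{L}(E_i).
\end{equation*}
Applying Lemma~\ref{lem:negative_semidefiniteness} to the symmetric matrix $E_i$ gives $E_i \preceq \mathcal{L}(E_i)$, hence $P_{i+1} \preceq P_i$. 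Since both $P_i$ and $P_{i+1}$ are symmetric positive definite (by property~1 applied twice), Corollary~\ref{cor:equivalence_conditions} with $A = B$ then delivers $\lambda_k(B,P_i) \leq \lambda_k(B,P_{i+1})$ for every $k$.

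For property~3, assume $B$ is nonnegative. Because $|B_{jk}| = B_{jk}$, the vector $R_i \mathbf{e}$ of row sums of $R_i$ coincides with the diagonal of $\mathcal{L}(R_i)$, hence $\mathcal{L}(R_i)\mathbf{e} = R_i\mathbf{e}$. Consequently $P_i \mathbf{e} = D_i\mathbf{e} + R_i\mathbf{e} = B\mathbf{e}$, so $(1,\mathbf{e})$ is an eigenpair of $(B,P_i)$. Combined with property~1, which forces every eigenvalue to lie in $(0,1]$, this upper endpoint is attained: $\lambda_n(B,P_i) = 1$. The main conceptual obstacle, as anticipated, was the decomposition $\mathcal{L}(R_i) = \mathcal{L}(R_{i+1}) + \mathcal{L}(E_i)$ in property~2; once this disjoint-support identity is recognized, the rest of the proof is a direct application of results already established.
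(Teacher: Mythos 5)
Your proof is correct and follows essentially the same route as the paper: property 1 via $P_i - B = \mathcal{L}(R_i) - R_i \succeq 0$ and Corollary \ref{cor:equivalence_conditions}, property 2 via the telescoping identity $P_{i+1}-P_i = \Delta D_i - \mathcal{L}(\Delta D_i) \preceq 0$ (your explicit disjoint-support justification of $\mathcal{L}(R_i)=\mathcal{L}(R_{i+1})+\mathcal{L}(E_i)$ is exactly the additivity the paper uses implicitly, and Lemma \ref{lem:negative_semidefiniteness} indeed covers the possibly singular $E_i$), and property 3 via the common row sums making $(1,\mathbf{e})$ an eigenpair. No gaps.
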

\begin{proof}
We prove all properties below.
\begin{enumerate}[noitemsep]
    \item We first prove that $P_i$ is symmetric positive definite for all $i=1,\dots,n$. Symmetry is a straightforward consequence of its definition and the symmetry of $B$. Positive definiteness immediately follows from Lemma \ref{lem:negative_semidefiniteness} and the positive definiteness of $B$:
    \begin{equation*}
        B-P_i = R_i-\mathcal{L}(R_i) \preceq 0 \implies P_i \succeq B \succ 0.
    \end{equation*}
    From the equivalent characterizations in Corollary \ref{cor:equivalence_conditions}, $\Lambda(B, P_i) \subset (0,1]$.
    \item For the second property, we note that
    \begin{equation*}
        P_{i+1}-P_i=D_{i+1}+\mathcal{L}(R_{i+1})-(D_i+\mathcal{L}(R_i))=D_{i+1}-D_i - \mathcal{L}(D_{i+1}-D_i)=\Delta D_i - \mathcal{L}(\Delta D_i) \preceq 0
    \end{equation*}
    where we have denoted $\Delta D_i=D_{i+1}-D_i$ and used the definition of the matrices $R_i$ and $R_{i+1}$ to deduce that
    \begin{equation*}
        \mathcal{L}(R_{i+1})-\mathcal{L}(R_i)=-\mathcal{L}(R_i-R_{i+1})=-\mathcal{L}(D_{i+1}-D_{i}).
    \end{equation*}
    We note that $\Delta D_i$ is obviously symmetric, but it may be singular. Nevertheless, Lemma \ref{lem:negative_semidefiniteness} covers for such a situation and states that $\Delta D_i-\mathcal{L}(\Delta D_i) \preceq 0$. Consequently, $P_i \succeq P_{i+1}$. Moreover, since $P_i$ and $P_{i+1}$ are symmetric positive definite, Corollary \ref{cor:equivalence_conditions} concludes that 
    \begin{equation*}
        \lambda_k(B,P_i) \leq \lambda_k(B,P_{i+1}) \qquad 1 \leq k \leq n.
    \end{equation*}
    The same argument shows that more generally, $P_i \succeq P_j$ for all $j \geq i$. Thus, $\lambda_n(P_j, P_i) \leq 1$ for all $j \geq i$.
    \item By the first property, $\lambda_n(B, P_i) \leq 1$. If $B$ is nonnegative, this upper bound is attained. Indeed, if $B$ is nonnegative, all preconditioners $P_i$ for $i=1,\dots,n$ are also nonnegative and all matrices have the same row-sum by construction. Thus, $B\mathbf{e}=(D_i+R_i)\mathbf{e}=(D_i+\mathcal{L}(R_i))\mathbf{e}=P_i\mathbf{e}$ where $\mathbf{e}$ is the vector of all ones. Therefore, $(1,\mathbf{e})$ is an eigenpair of $(B, P_i)$ and $(P_j, P_i)$ for all $i,j=1,\dots,n$. Consequently, $\lambda_n(B, P_i)=1$ and $\lambda_n(P_j, P_i)=1$ for all $j \geq i$.
\end{enumerate}
\end{proof}

Lemma \ref{lem:negative_semidefiniteness} states that $\mathcal{L}(B) \succeq B$. Increasing the bandwidth of the preconditioner allows us to find increasingly better preconditioners that are ``closer'' to $B$ in the Loewner ordering:
\begin{equation*}
    \mathcal{L}(B)=P_1 \succeq P_2 \succeq \dots \succeq P_{n-1} \succeq P_n=B.
\end{equation*}
Theorem \ref{th:family_lumped_mass_prec} then simply states that $\Lambda(B, P_i)$ converges monotonically to 1 for increasing values of $i$. In particular, the theorem can be used to construct a sequence of preconditioners for the mass matrix and the first element of this sequence coincides with the usual row-sum lumped mass matrix. Since $\Lambda(M, P_i)$ converges monotonically to $1$ for increasing values of $i$, $\Lambda(K, P_i)$ is expected to yield an increasingly good approximation of $\Lambda(K,M)$. Indeed, since $P_i \succeq P_{i+1}$, Corollary \ref{cor:equivalence_conditions} yields $\lambda_k(K, P_i) \leq \lambda_k(K, P_{i+1})$. By repeating the same argument for all $i=1,\dots,n-1$, we obtain
\begin{equation*}
    \lambda_k(K, \mathcal{L}(M))=\lambda_k(K, P_1) \leq \lambda_k(K, P_2) \leq \dots \leq \lambda_k(K, P_{n-1}) \leq \lambda_k(K, P_n)=\lambda_k(K, M) \qquad 1 \leq k \leq n.
\end{equation*}
In other words, $\Lambda(K, P_i)$ converges monotonically from below to $\Lambda(K, M)$ for increasing values of $i$. In practice, we are only interested in small values of $i$ such that the preconditioners $P_i$ have narrow bandwidths. Linear systems of size $n$ with banded symmetric positive definite matrices of bandwidth $b \ll n$ can be solved in $O(nb^2)$ floating point operations (flops), which represents a considerable saving over the $O(n^3)$ complexity for general matrices using standard Gaussian elimination \citep{golub2013matrix, quarteroni2010numerical}. The tridiagonal case is particularly appealing since linear systems can be solved in $O(n)$ flops using the Thomas algorithm. In general, choosing the right preconditioner is a compromise between accuracy and computational cost. Moreover, we must emphasize that all preconditioners devised in this work are not meant to be used as preconditioners but as direct replacement for the mass matrix in the Newmark method, as is customary in engineering practice.

\begin{example}[Lumped mass preconditioners - 1D]
\label{ex: preconditioners_Pi_1D}
We consider an isogeometric discretization of the 1D Laplace equation on $\Omega=(0,1)$ with B-splines of order $p=3$ and $p=5$ with $C^{p-1}$ continuity and $400$ subdivisions. For a $p$th order discretization with $C^{p-1}$ continuity, the mass and stiffness matrices are banded matrices of bandwidth $p$. Therefore, they have $2p+1$ bands. We construct the first three lumped mass preconditioners of the sequence ($P_1$, $P_2$ and $P_3$). The preconditioner $P_i$ is a banded matrix of bandwidth $i-1$ and has $2i-1$ bands. Thus, $P_1$, $P_2$ and $P_3$ are diagonal, tridiagonal and pentadiagonal matrices, respectively. None of these preconditioners are able to represent the mass matrices exactly. Already for $p=3$, the mass matrix has $7$ bands. However, the further the bands are from the diagonal, the smaller the magnitude of their entries. The spectrum of $(K, M)$ and $(K, P_i)$ for $i=1,2,3$ are compared in Figures \ref{fig: Laplace_lumped_mass_1D_p3_n400} and \ref{fig: Laplace_lumped_mass_1D_p5_n400} for $p=3$ and $p=5$, respectively. The approximation quality improves tremendously already for small values of $i$. For $p=3$, the preconditioner $P_3$ leads to a nearly perfect approximation. We notice that $\Lambda(K, P_i)$ converges monotonically from below to $\Lambda(K, M)$, in perfect agreement with our theoretical results. Alert readers will also notice the ``spikes'' appearing towards the end of the spectrum. The largest eigenvalues of $(K, M)$ grossly overestimate those of the continuous operator and for this reason were coined \textit{outliers} in \citep{cottrell2006isogeometric} where they were first discussed. Unfortunately, in our case, $\Lambda(K, P_i)$ approximates the entire spectrum of $(K, M)$, including the outliers. Fixing this issue will be the object of future work.

\begin{center}
\begin{minipage}[t]{.48\linewidth}
\vspace{0pt}
\begin{figure}[H]
    \centering
    \includegraphics[width=\textwidth]{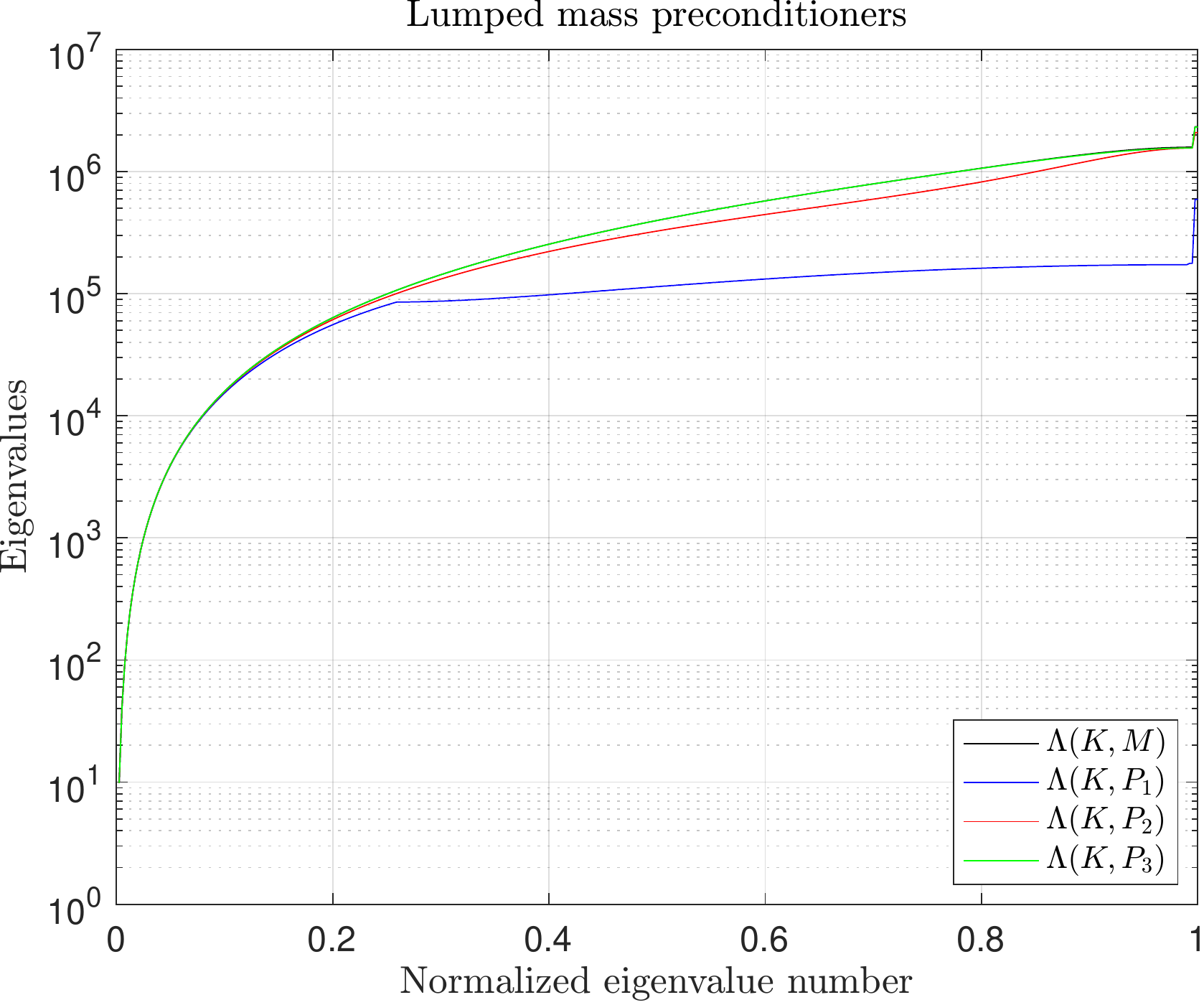}
    \caption{Comparison of $\Lambda(K, M)$ and $\Lambda(K, P_i)$ for $i=1,2,3$ and $p=3$}
    \label{fig: Laplace_lumped_mass_1D_p3_n400}
\end{figure}
\end{minipage}
\hspace{2pt}
\begin{minipage}[t]{.48\linewidth}
\vspace{0pt}
\begin{figure}[H]
    \centering
    \includegraphics[width=\textwidth]{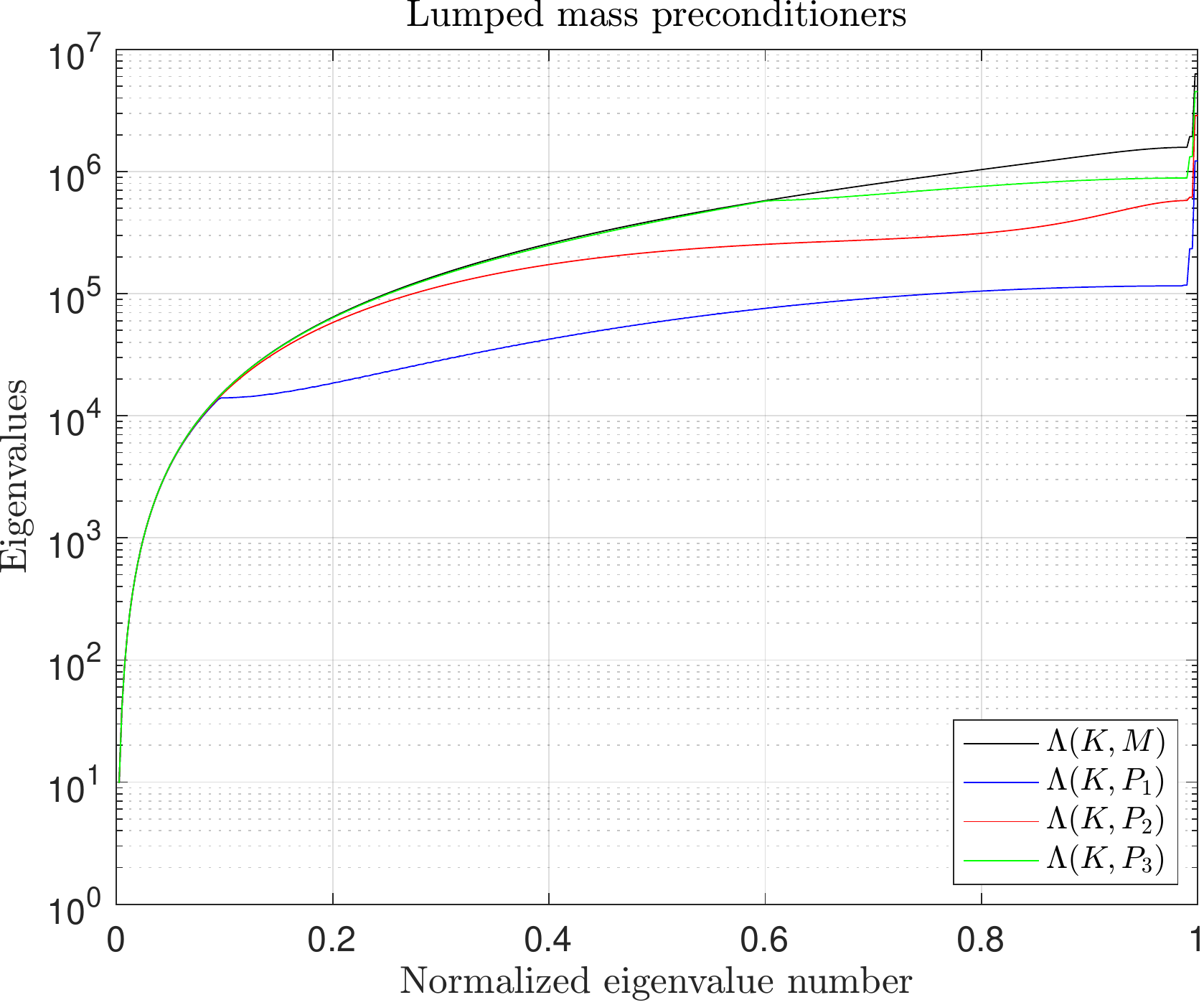}
    \caption{Comparison of $\Lambda(K, M)$ and $\Lambda(K, P_i)$ for $i=1,2,3$ and $p=5$}
    \label{fig: Laplace_lumped_mass_1D_p5_n400}
\end{figure}
\end{minipage}
\end{center}

The performance of these preconditioners in approximating the spectrum of $(K,M)$ is directly related to the clustering on the eigenvalues of $(M, P_i)$ around $1$. The eigenvalues of the matrix pencils $(M, P_i)$ for $i=1,2,3$ are shown in Figures \ref{fig: Laplace_spectrum_prec_1D_p3_n400} and \ref{fig: Laplace_spectrum_prec_1D_p5_n400} for $p=3$ and $p=5$, respectively. As predicted theoretically, the spectrum of the preconditioned matrices converges monotonically to $1$.

\begin{center}
\begin{minipage}[t]{.48\linewidth}
\vspace{0pt}
\begin{figure}[H]
    \centering
    \includegraphics[width=\textwidth]{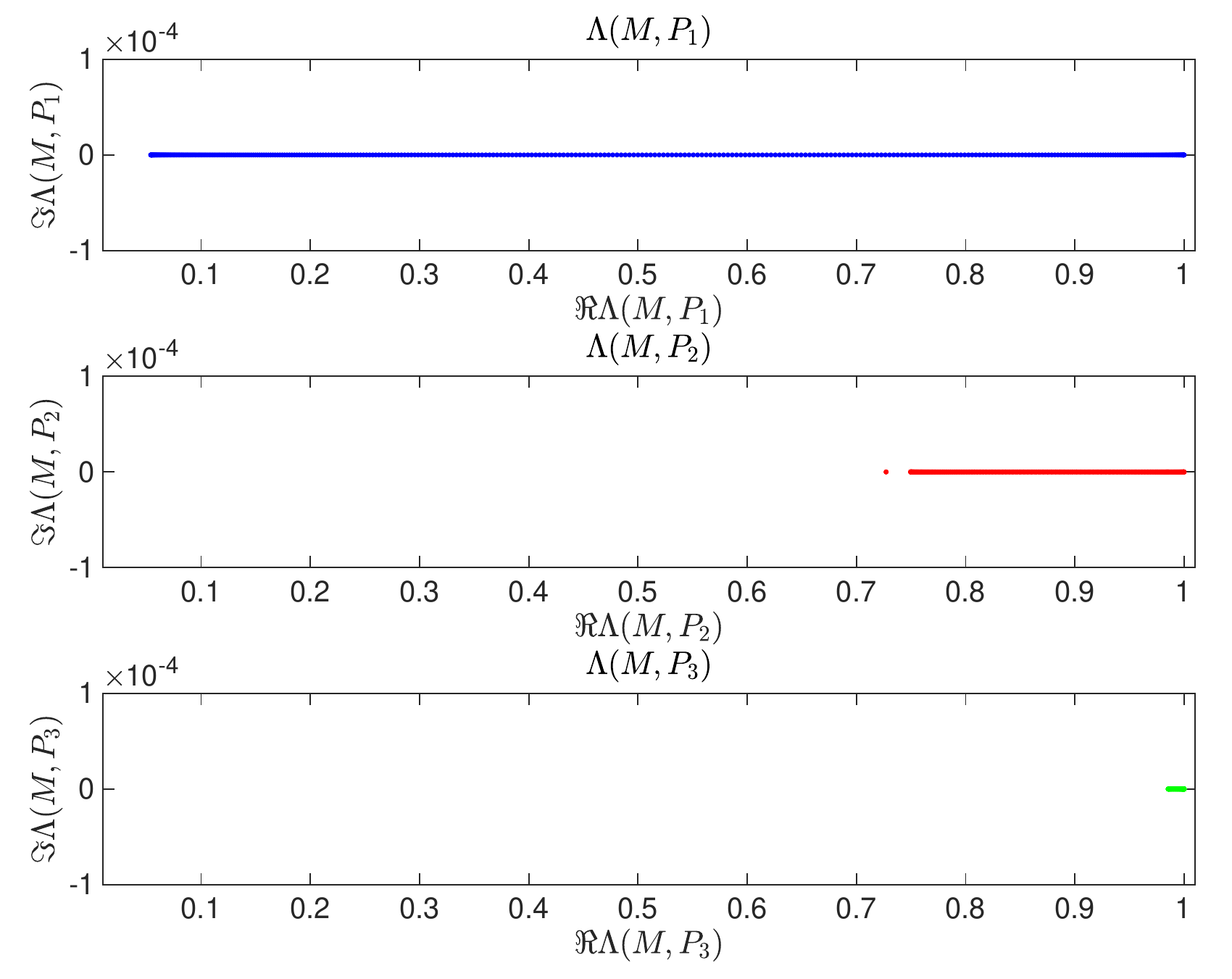}
    \caption{$\Lambda(M, P_i)$ for $i=1,2,3$ and $p=3$}
    \label{fig: Laplace_spectrum_prec_1D_p3_n400}
\end{figure}
\end{minipage}
\hspace{2pt}
\begin{minipage}[t]{.48\linewidth}
\vspace{0pt}
\begin{figure}[H]
    \centering
    \includegraphics[width=\textwidth]{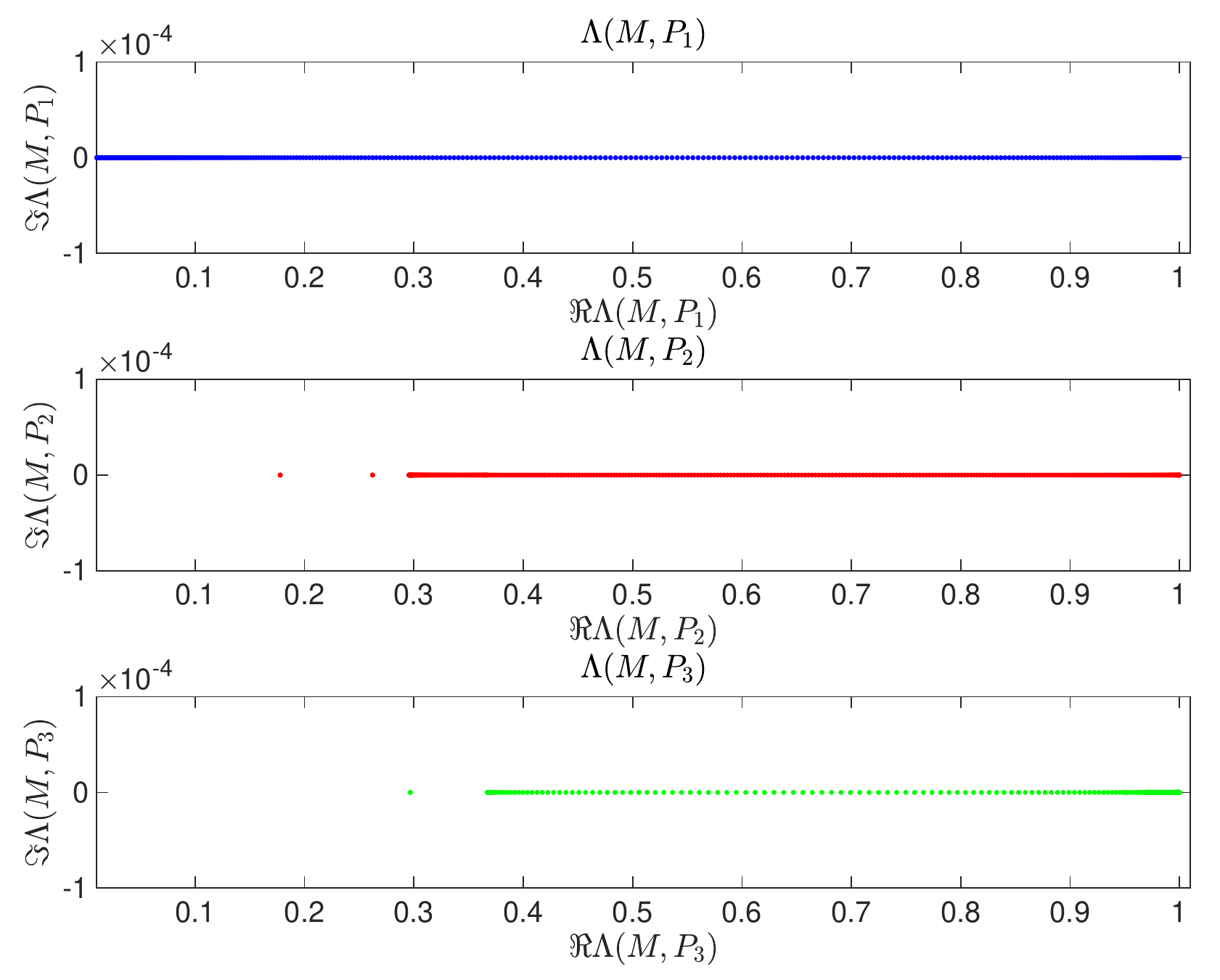}
    \caption{$\Lambda(M, P_i)$ for $i=1,2,3$ and $p=5$}
    \label{fig: Laplace_spectrum_prec_1D_p5_n400}
\end{figure}
\end{minipage}
\end{center}
\end{example}

\begin{remark}
Theorem \ref{th:family_lumped_mass_prec} can also be used to construct a sequence of preconditioners for the stiffness matrix $K$. It can be easily shown that in this case the approximate spectrum converges monotonically from above to $\Lambda(K, M)$ for increasing values of $i$. However, the main computational bottleneck in explicit dynamics consists in solving linear systems with the mass matrix, not the stiffness matrix. Therefore, we will not explore this option any further.
\end{remark}

\begin{example}[Elastodynamics - 1D]
\label{ex: elastodynamics_1D}
We will show in this example that the improved accuracy on the lower part of the spectrum translates into better approximate solutions. We consider a rod clamped at both sides and subjected to some initial displacement profile. This problem is a small variant of \citep[][Section 7.1]{evans2018explicit}. The initial boundary value problem is a particular case of \eqref{eq: wave_equation} and reads 
\begin{align}
 \partial_{tt} u(x,t)-\partial_{xx} u(x,t)&=0 & &\text{ in } \Omega \times (0,T], \label{eq: 1D_elastodynamics} \\
 u(x,t)&=0 & &\text{ on } \partial \Omega \times (0,T], \nonumber\\
 u(x,0)&=\sin(4 \pi x) & &\text{ in } \Omega,  \nonumber\\
 \partial_t u(x,0)&=0 & &\text{ in } \Omega, \nonumber
\end{align}
where $\Omega=(0,1)$ and $T=6$. The exact solution of \eqref{eq: 1D_elastodynamics} is $u(x,t)=\sin(4 \pi x)\cos(4 \pi t)$. Problem \eqref{eq: 1D_elastodynamics} is discretized in space using quartic B-splines with $50$ subdivisions and integrated in time using the central difference method described in Section \ref{se: explicit_dynamics}. The consistent mass matrix yields the most restrictive step size and is computed according to \eqref{eq: delta_t_crit_central_diff} after multiplying it by $0.85$ for improved accuracy. For the sake of comparison, the same step size is chosen for all test cases although it could be increased for the lumped mass matrices $P_i$. The numerical solutions for the consistent mass and lumped mass matrices $P_i$ for $i=1,2,3$ are shown at time $t=1$ and $t=5$ alongside the exact solution in Figures \ref{fig: 1D_Laplace_ML_explicit_disp_time_1_p4_n50} and \ref{fig: 1D_Laplace_ML_explicit_disp_time_5_p4_n50}, respectively. The experiment reveals that slightly expanding the bandwidth might significantly improve the solution, especially for large time. As expected, the larger the bandwidth, the better the solution.

\begin{center}
\begin{minipage}[t]{.48\linewidth}
\vspace{0pt}
\begin{figure}[H]
    \centering
    \includegraphics[width=\textwidth]{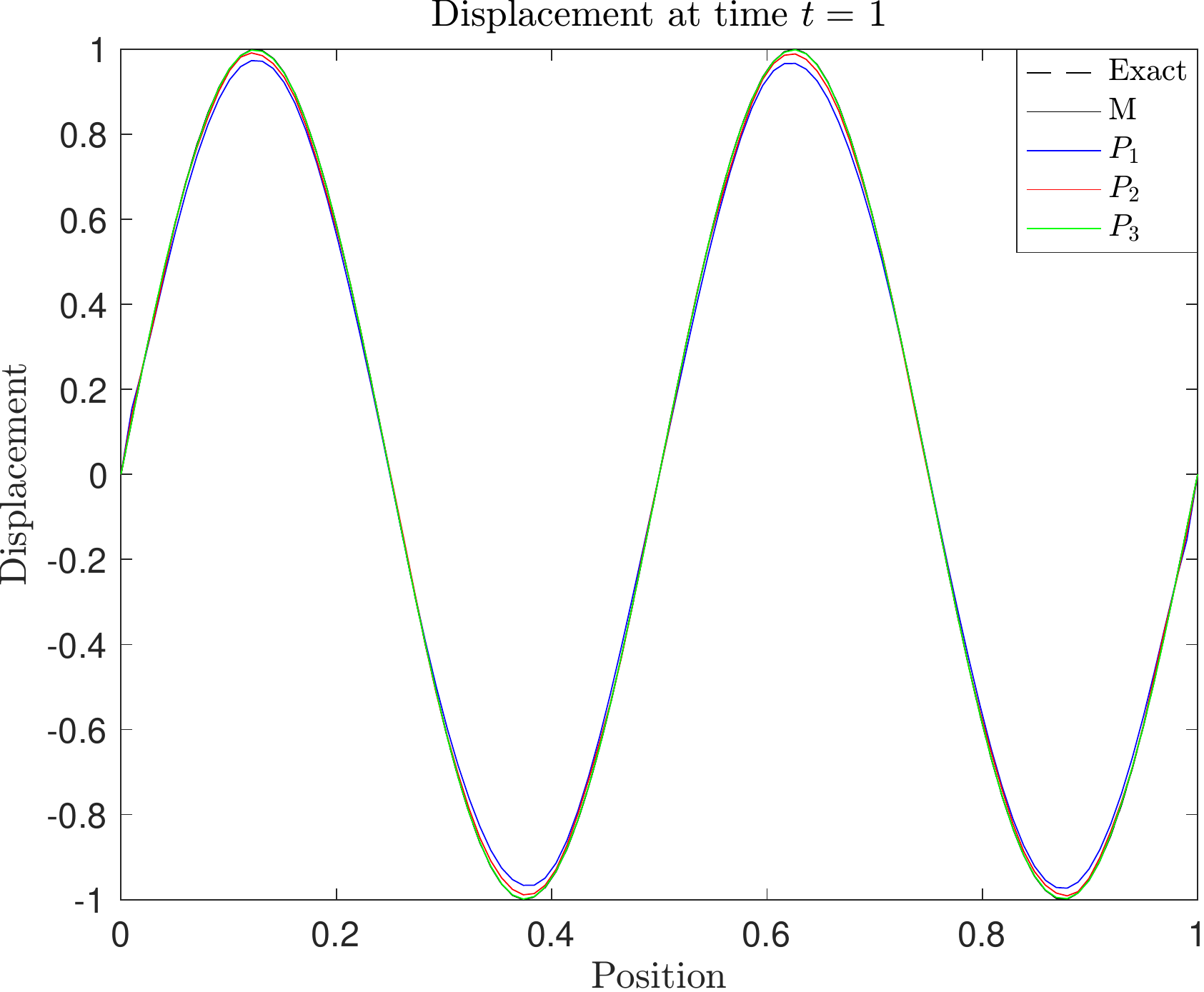}
    \caption{Displacement along the rod at time $t=1$}
    \label{fig: 1D_Laplace_ML_explicit_disp_time_1_p4_n50}
\end{figure}
\end{minipage}
\hspace{2pt}
\begin{minipage}[t]{.48\linewidth}
\vspace{0pt}
\begin{figure}[H]
    \centering
    \includegraphics[width=\textwidth]{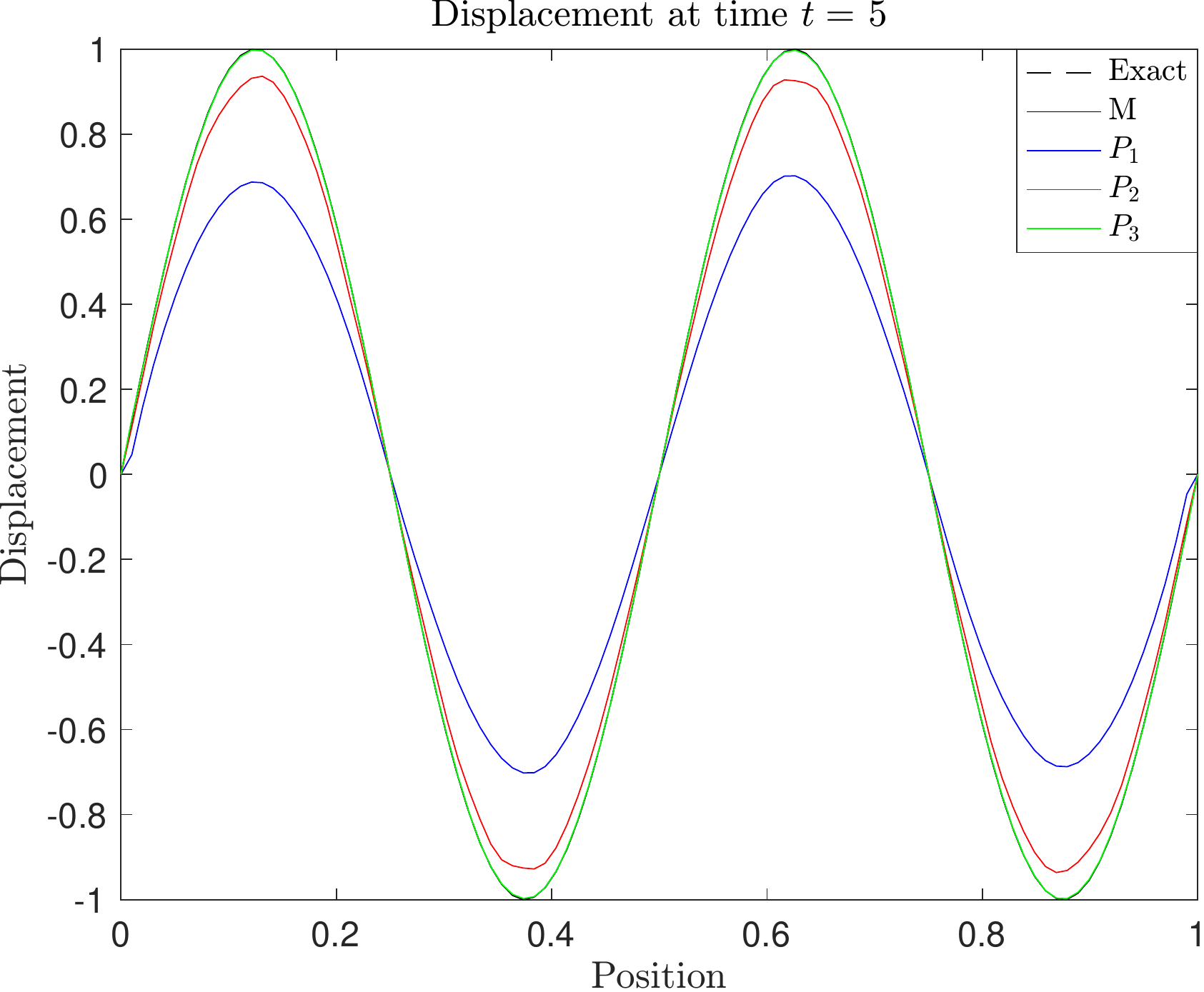}
    \caption{Displacement along the rod at time $t=5$}
    \label{fig: 1D_Laplace_ML_explicit_disp_time_5_p4_n50}
\end{figure}
\end{minipage}
\end{center}
\end{example}

Theorem \ref{th:family_lumped_mass_prec} is based on purely algebraic considerations on the entries of the matrices. Thus, it holds regardless of how complicated the physical domain is or how large the dimension of the problem becomes. However, for multidimensional problems the mass matrix no longer has a banded structure. Therefore, if banded preconditioners are constructed, the preconditioned spectrum will most likely converge very slowly to $1$. This foreboding was confirmed in numerical experiments. Indeed, it is well-known in preconditioning practice that good preconditioners must preserve the essential structure of the matrices \citep{wathen2015preconditioning, pearson2020preconditioners}. Therefore, it would be wise to further extend the family of lumped mass preconditioners to better account for the structure. For tensor product basis functions, there exists a natural generalization to multidimensional problems.

\subsection{Multidimensional problems}
\label{subse: multidimensional_pb}
We first discuss the case of a scalar PDE in 2D. The extension to 3D is straightforward and vector problems will be discussed at the end of this section. If the physical domain is a square, the density function is separable and tensor product basis functions are used, the mass matrix is expressed as $M=M_1 \otimes M_2$ where $M_1$ and $M_2$ are mass matrices for univariate problems (see for instance \cite{gao2014fast}). We then consider the family of lumped mass preconditioners given by $P_{ij}=P_{1,i} \otimes P_{2,j}$ where $P_{1,i}$ and $P_{2,j}$ are banded preconditioners defined in Definition \ref{def:preconditioners_Pi} for some indices $1 \leq i,j \leq n$. This definition is the natural extension to two-dimensional problems, as shown in the next theorem. 

\begin{definition}[Preconditioners $P_{ij}$]
\label{def:preconditioners_Pij}
Let $B=B_1 \otimes B_2$ where $B_1,B_2 \in \mathbb{R}^{n \times n}$ are symmetric positive definite matrices. We define the family of preconditioners $P_{ij}=P_{1,i} \otimes P_{2,j}$, where $P_{1,i}$ and $P_{2,j}$ are preconditioners constructed from $B_1$ and $B_2$, respectively, for some indices $1 \leq i,j \leq n$ according to Definition \ref{def:preconditioners_Pi}. In particular, we observe that $P_{nn}=B$.
\end{definition}

\begin{theorem}
\label{th:family_lumped_mass_prec_ext}
Let $B=B_1 \otimes B_2$ where $B_1,B_2 \in \mathbb{R}^{n \times n}$ are symmetric positive definite matrices. Then, the family of preconditioners $\{P_{ij}\}_{i,j=1}^n$ constructed from $B$ according to Definition \ref{def:preconditioners_Pij} satisfies the following properties:
\begin{enumerate}[noitemsep]
    \item $\Lambda(B, P_{ij}) \subset (0,1]$ for all $i,j=1,\dots,n$,
    \item $\lambda_k(B, P_{sq}) \leq \lambda_k(B, P_{ij})$ for all $k=1,\dots,n^2$ if $i \geq s$ and $j \geq q$,
    \item If $B_1$ and $B_2$ are nonnegative, then $\lambda_n(B, P_{ij})=1$ for all $i,j=1,\dots,n$.
\end{enumerate}
\end{theorem}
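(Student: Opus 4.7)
The plan is to reduce each of the three claims to the univariate Theorem \ref{th:family_lumped_mass_prec} by exploiting how a Kronecker-product pencil inherits its spectral data from the factor pencils. Theorem \ref{th:family_lumped_mass_prec}(1) says that each $P_{1,i}$ and $P_{2,j}$ is symmetric positive definite, and therefore so is $P_{ij}$. Lemma \ref{lem:classical_problem} then yields simultaneously diagonalizing bases $\{u_k\}_{k=1}^n$ and $\{v_\ell\}_{\ell=1}^n$ for $(B_1, P_{1,i})$ and $(B_2, P_{2,j})$, and the tensor products $u_k \otimes v_\ell$ diagonalize $(B, P_{ij})$ with eigenvalues $\mu_k \nu_\ell$, where $\mu_k = \lambda_k(B_1, P_{1,i})$ and $\nu_\ell = \lambda_\ell(B_2, P_{2,j})$. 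This spectral factorization will carry most of the weight of the argument.

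With this in hand, the first statement is immediate: Theorem \ref{th:family_lumped_mass_prec}(1) places every $\mu_k$ and $\nu_\ell$ in $(0,1]$, so all $n^2$ products lie in $(0,1]$ as well. The third statement is just as short: when $B_1$ and $B_2$ are nonnegative, Theorem \ref{th:family_lumped_mass_prec}(3) gives $\mu_n = \nu_n = 1$, hence $1 = \mu_n \nu_n$ belongs to $\Lambda(B, P_{ij})$, and by the first statement this is the largest element of the spectrum.

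For the monotonicity statement I would bypass the eigenvalue-product formula and instead prove the stronger Loewner-order inequality $P_{sq} \succeq P_{ij}$ whenever $i \ge s$ and $j \ge q$, then invoke Corollary \ref{cor:equivalence_conditions}. From Theorem \ref{th:family_lumped_mass_prec} (the chain $P_1 \succeq \cdots \succeq P_n$ recorded just after its proof) I get $P_{1,s} \succeq P_{1,i}$ and $P_{2,q} \succeq P_{2,j}$. A one-line telescoping identity
\[
P_{sq} - P_{ij} \;=\; P_{1,s} \otimes (P_{2,q} - P_{2,j}) \;+\; (P_{1,s} - P_{1,i}) \otimes P_{2,j}
\]
then reduces the problem to the elementary fact that the Kronecker product of two symmetric positive semidefinite matrices is positive semidefinite, which closes the argument.

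The main obstacle is conceptual rather than technical. The naive attempt to deduce the monotonicity directly from the eigenvalue-product description $\{\mu_k \nu_\ell\}$ runs into an ordering issue: knowing $\mu_k^{(s)} \le \mu_k^{(i)}$ componentwise in $k$ and $\nu_\ell^{(q)} \le \nu_\ell^{(j)}$ componentwise in $\ell$ only orders the products by the double index $(k,\ell)$, not by the single rank in which the $n^2$ eigenvalues of $(B, P_{ij})$ are listed, so a rearrangement argument would be needed. Routing through the Loewner order sidesteps this entirely and mirrors the structure of the univariate proof, so no new ideas are required beyond what Theorem \ref{th:family_lumped_mass_prec} already provides.
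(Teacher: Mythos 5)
Your proof is correct, and for the central monotonicity claim it takes a genuinely different route from the paper. Properties 1 and 3 are handled exactly as in the paper: both arguments rest on the spectral factorization $\Lambda(B,P_{ij})=\Lambda(B_1,P_{1,i})\,\Lambda(B_2,P_{2,j})$, so every eigenvalue is a product $\mu_k\nu_\ell$ of numbers in $(0,1]$, and in the nonnegative case $\mu_n\nu_n=1$ is attained. For property 2, however, the paper applies Theorem \ref{th:eig_bounds}, inequality \eqref{eq: ineq1}, to get $\lambda_k(B,P_{sq})\leq\lambda_k(B,P_{ij})\lambda_n(P_{ij},P_{sq})$ and then bounds $\lambda_n(P_{ij},P_{sq})=\lambda_n(P_{1,i},P_{1,s})\lambda_n(P_{2,j},P_{2,q})\leq 1$ via the same Kronecker spectral factorization applied to the pencil of two preconditioners; you instead prove the stronger Loewner statement $P_{sq}\succeq P_{ij}$ directly through the telescoping identity $P_{sq}-P_{ij}=P_{1,s}\otimes(P_{2,q}-P_{2,j})+(P_{1,s}-P_{1,i})\otimes P_{2,j}$ together with the fact that the Kronecker product of positive semidefinite matrices is positive semidefinite, and then invoke Corollary \ref{cor:equivalence_conditions}. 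The two are logically close --- by Corollary \ref{cor:equivalence_conditions} the bound $\lambda_n(P_{ij},P_{sq})\leq 1$ is equivalent to $P_{sq}\succeq P_{ij}$ --- but your version makes the Loewner chain among the $P_{ij}$ explicit as a standalone algebraic fact, mirrors the univariate proof more closely, and avoids re-deriving the spectrum of the pencil $(P_{ij},P_{sq})$. Your remark that a purely eigenvalue-product argument would need a rearrangement step is well taken, though that step is in fact available (if $a_{k\ell}\leq b_{k\ell}$ entrywise then the sorted sequences compare entrywise), so the naive route is salvageable; sidestepping it is still the cleaner choice.
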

\begin{proof}
We prove all properties below using the results of Theorem \ref{th:family_lumped_mass_prec}. Let us first define the product of two sets $\mathcal{S}_1, \mathcal{S}_2 \subseteq \mathbb{C}$ as $\mathcal{S}_1\mathcal{S}_2=\{s_1s_2 \colon s_1 \in \mathcal{S}_1, s_2 \in \mathcal{S}_2\}$.
\begin{enumerate}[noitemsep]
    \item We will first prove that $B$ and $P_{ij}$ are symmetric positive definite. Our arguments are based on some basic properties of the Kronecker product. The reader may for instance refer to \citep[][Chapter 4]{horn1991topics} for their proof. For matrices $A,C \in \mathbb{R}^{n \times n}$
    \begin{itemize}
        \item $\Lambda(A \otimes C)=\Lambda(A)\Lambda(C)$,
        \item $(A \otimes C)^T=A^T \otimes C^T$,
        \item If $A$ and $C$ are invertible, $(A \otimes C)^{-1}=A^{-1} \otimes C^{-1}$.
    \end{itemize}
     By the first and second property, since $B_1$, $B_2$ and $P_{1,i}, P_{2,j}$ are symmetric positive definite for all $i,j=1,\dots,n$, then so are $B_1 \otimes B_2$ and $P_{1,i} \otimes P_{2,j}$. Therefore, $\lambda_k(B, P_{ij})> 0$ for all $k=1,\dots,n^2$. Then, using Lemma \ref{lem:equivalent_pencils} and the properties of the Kronecker product
    \begin{equation*}
        \Lambda(B, P_{ij})=\Lambda(B_1 \otimes B_2, P_{1,i} \otimes P_{2,j})=\Lambda(P_{1,i}^{-1}B_1 \otimes P_{2,j}^{-1}B_2)=\Lambda(B_1, P_{1,i})\Lambda(B_2, P_{2,j}).
    \end{equation*}
    Thus, using the first property of Theorem \ref{th:family_lumped_mass_prec}, $\lambda_n(B, P_{ij})=\lambda_n(B_1, P_{1,i})\lambda_n(B_2, P_{2,j}) \leq 1$ and $\Lambda(B, P_{ij}) \subset (0,1]$.
    \item Using Theorem \ref{th:eig_bounds}, inequality \eqref{eq: ineq1}, we obtain
    \begin{equation*}
        \lambda_k(B,P_{sq}) \leq \lambda_k(B,P_{ij})\lambda_n(P_{ij},P_{sq}) \qquad 1 \leq k \leq n^2.
    \end{equation*}
    Using again Lemma \ref{lem:equivalent_pencils} and the properties of the Kronecker product, $\lambda_n(P_{ij},P_{sq})=\lambda_n(P_i, P_s)\lambda_n(P_j,P_q)$. Finally, when proving the third property of Theorem \ref{th:family_lumped_mass_prec}, we showed that $\lambda_n(P_j, P_i) \leq 1$ for all $j \geq i$. Therefore, $\lambda_n(P_{ij},P_{sq}) \leq 1$ if $i \geq s$ and $j \geq q$ and the result follows.
    \item Moreover, if $B_1$ and $B_2$ are nonnegative, so are the preconditioners $P_{1,i}$ and $P_{2,j}$ for all $i,j=1,\dots,n$ constructed from $B_1$ and $B_2$ and Theorem \ref{th:family_lumped_mass_prec} states that $\lambda_n(B_1, P_{1,i})=1$ and $\lambda_n(B_2, P_{2,j})=1$ for all $i,j=1,\dots,n$. Therefore, $\lambda_n(B, P_{ij})=\lambda_n(B_1, P_{1,i})\lambda_n(B_2, P_{2,j})=1$.
\end{enumerate}
\end{proof}

\begin{remark}
The previous theorem is extended straightforwardly to matrices $B=B_1 \otimes B_2 \otimes B_3$ and the associated family of preconditioners $P_{ijk}=P_{1,i} \otimes P_{2,j} \otimes P_{3,k}$ where $P_{1,i}$, $P_{2,j}$ and $P_{3,k}$ are constructed from $B_1$, $B_2$ and $B_3$, respectively, for some indices $1 \leq i,j,k \leq n$ according to Definition \ref{def:preconditioners_Pi}. 
\end{remark}

Linear systems with a Kronecker product structure can be solved very efficiently and especially if the factor matrices themselves have good properties. It is the main motivation for designing Kronecker product preconditioners \citep{gao2014fast, loli2021easy, sangalli2016isogeometric}. In the 2D case, if all factor matrices have size $n$ and $c_1$ and $c_2$ denote the cost of solving a single linear system with $P_{1,i}$ and $P_{2,j}$, respectively, then the cost of solving a linear system with $P_{ij}$ is $O(n(c_1+c_2))$. Thus, the preconditioner of largest bandwidth dictates the overall cost. If $c_1 \approx c_2 \approx n$, this cost amounts to $O(n^2)$, which is linear in the size of the system. The analysis can be easily generalized to the 3D case.

\begin{remark}
Definition \ref{def:preconditioners_Pij} can be generalized to matrices $B=S^T(B_1 \otimes B_2)S$ where $B_1,B_2 \in \mathbb{R}^{n \times n}$ are symmetric positive matrices and $S$ is an invertible matrix. The preconditioners are then naturally defined as $P_{ij}=S^T(P_{1,i} \otimes P_{2,j})S$. However, such preconditioners are only useful if linear systems with $S$ can be solved easily. 
\end{remark}

Theorem \ref{th:family_lumped_mass_prec_ext} significantly increases the possibilities for preconditioning the mass matrix. It is worthwhile noting that different preconditioners may have the same sparsity pattern. 

\begin{example}[Lumped mass preconditioners - 2D]
\label{ex: preconditioners_Pij_2D}
We consider the 2D Laplace model problem on the unit square $\Omega=(0,1)^2$ discretized with maximally smooth B-splines of order $p=3$ and $p=5$ and $20$ subdivisions in each parametric direction. For this simple model problem, the mass matrix is expressed as a Kronecker product $M=M_1 \otimes M_2$, with $M_1=M_2$ because of the same discretization parameters in each direction. We now compare the preconditioners $P_i$ for $i=1,2,3$ constructed following Definition \ref{def:preconditioners_Pi} and ignoring the structure of $M$ with the preconditioners $P_{ii}$ constructed following Definition \ref{def:preconditioners_Pij}. For $i=1$, the two preconditioning strategies coincide. Hence, Figures \ref{fig: 2D_Laplace_unit_square_LM_P2_P22_p3_n20} and \ref{fig: 2D_Laplace_unit_square_LM_P3_P33_p3_n20} show the comparison for $i=2$ and $i=3$, respectively and $p=3$. The same comparison for $p=5$ is shown in Figures \ref{fig: 2D_Laplace_unit_square_LM_P2_P22_p5_n20} and \ref{fig: 2D_Laplace_unit_square_LM_P3_P33_p5_n20}. In both cases, the preconditioners $P_{ii}$ lead to a significant improvement. However, it is not a fair comparison since they also usually have many more nonzero entries. Thus, let us select two different preconditioners with exactly the same sparsity pattern and the same number of nonzeros. For instance, $P_2$ and $P_{12}$ are both tridiagonal. Yet, Figure \ref{fig: 2D_Laplace_unit_square_LM_comp_P2_P12_p3_n20} reveals that the preconditioner $P_{12}$ performs much better. 

\begin{center}
\begin{minipage}[t]{.48\linewidth}
\vspace{0pt}
\begin{figure}[H]
    \centering
    \includegraphics[width=\textwidth]{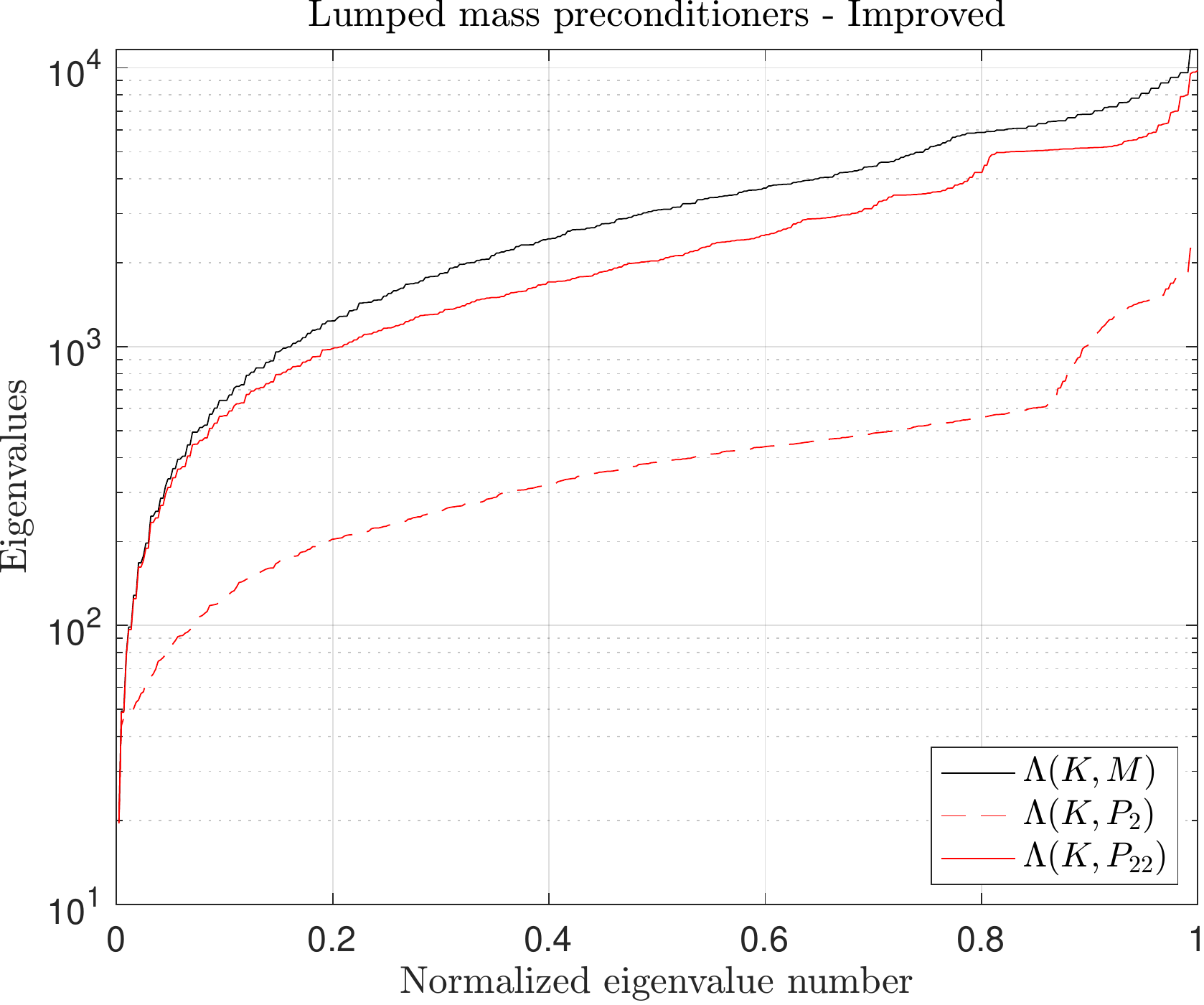}
    \caption{Comparison of $\Lambda(K, P_2)$ and $\Lambda(K, P_{22})$ for $p=3$}
    \label{fig: 2D_Laplace_unit_square_LM_P2_P22_p3_n20}
\end{figure}
\end{minipage}
\hspace{2pt}
\begin{minipage}[t]{.48\linewidth}
\vspace{0pt}
\begin{figure}[H]
    \centering
    \includegraphics[width=\textwidth]{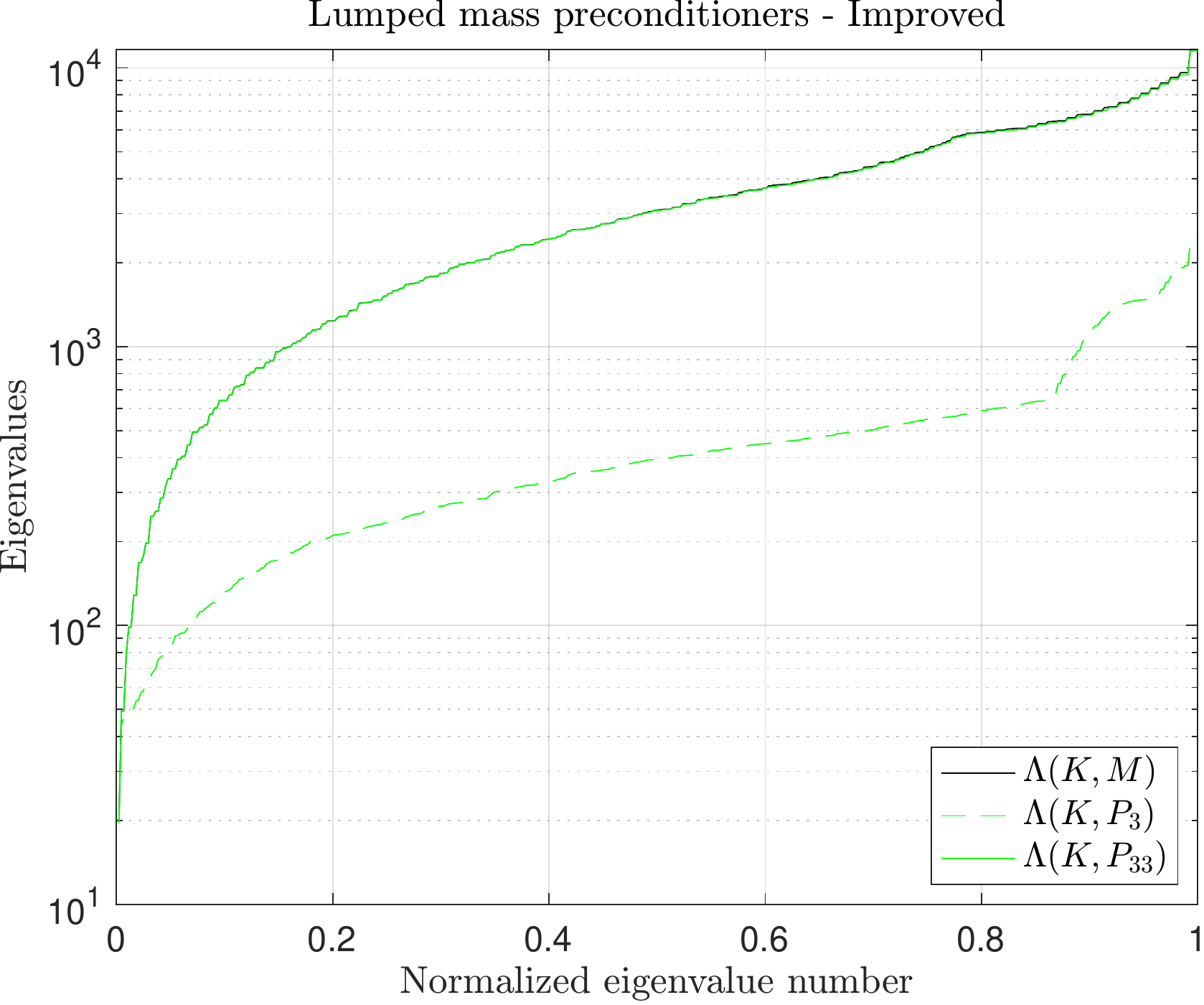}
    \caption{Comparison of $\Lambda(K, P_3)$ and $\Lambda(K, P_{33})$ for $p=3$}
    \label{fig: 2D_Laplace_unit_square_LM_P3_P33_p3_n20}
\end{figure}
\end{minipage}
\end{center}

\begin{center}
\begin{minipage}[t]{.48\linewidth}
\vspace{0pt}
\begin{figure}[H]
    \centering
    \includegraphics[width=\textwidth]{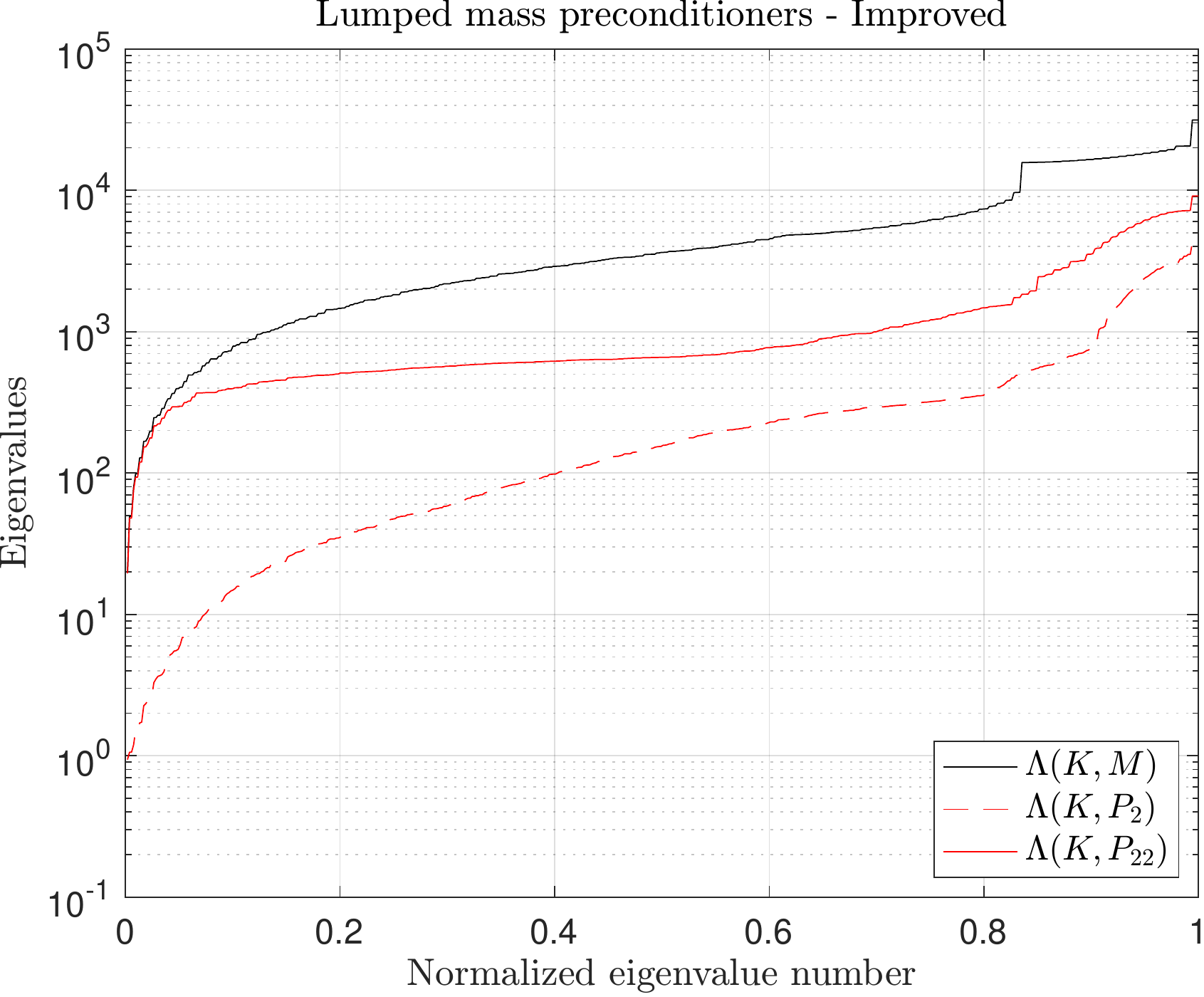}
    \caption{Comparison of $\Lambda(K, P_2)$ and $\Lambda(K, P_{22})$ for $p=5$}
    \label{fig: 2D_Laplace_unit_square_LM_P2_P22_p5_n20}
\end{figure}
\end{minipage}
\hspace{2pt}
\begin{minipage}[t]{.48\linewidth}
\vspace{0pt}
\begin{figure}[H]
    \centering
    \includegraphics[width=\textwidth]{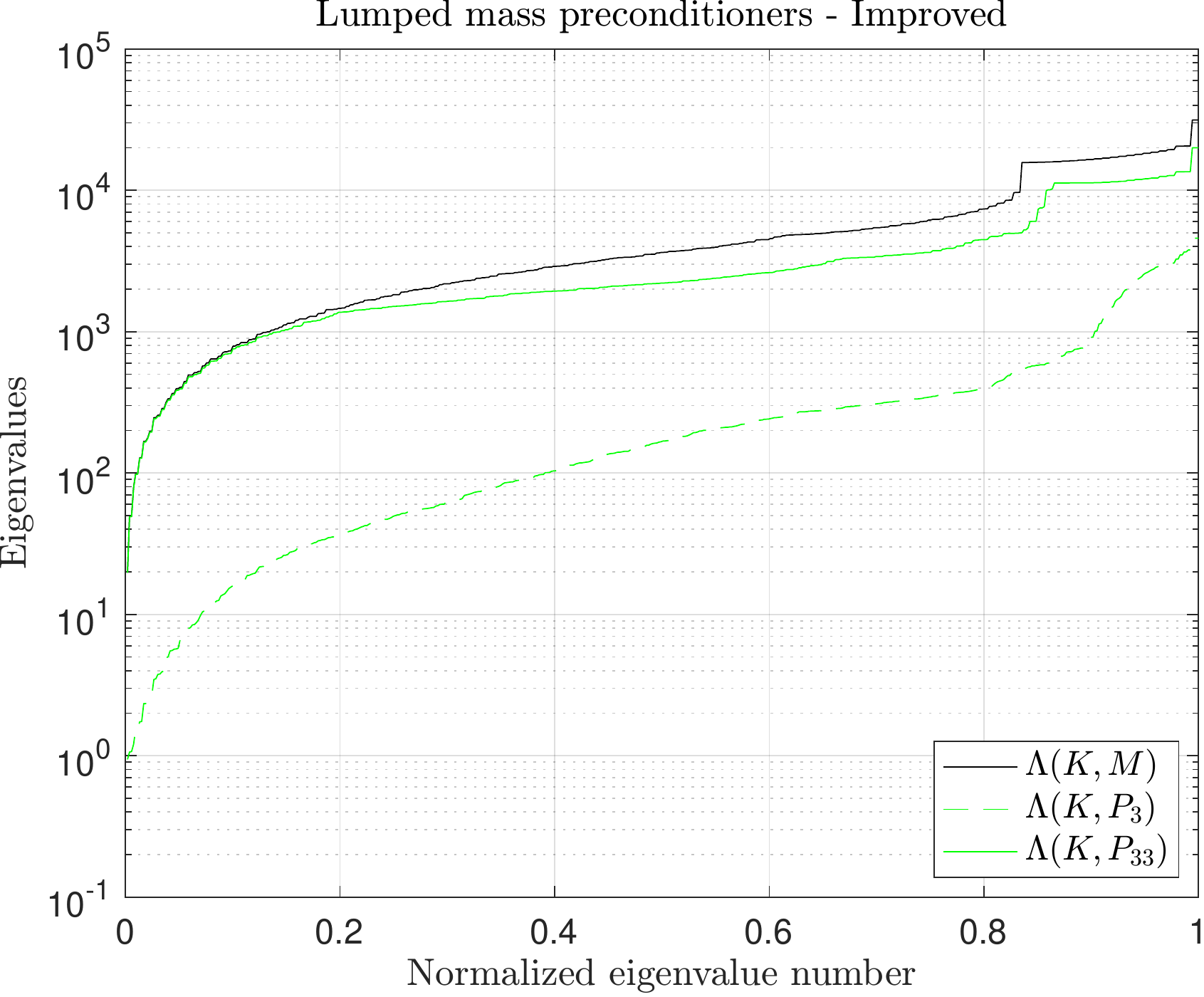}
    \caption{Comparison of $\Lambda(K, P_3)$ and $\Lambda(K, P_{33})$ for $p=5$}
    \label{fig: 2D_Laplace_unit_square_LM_P3_P33_p5_n20}
\end{figure}
\end{minipage}
\end{center}

\begin{figure}[H]
    \centering
    \includegraphics[scale=0.5]{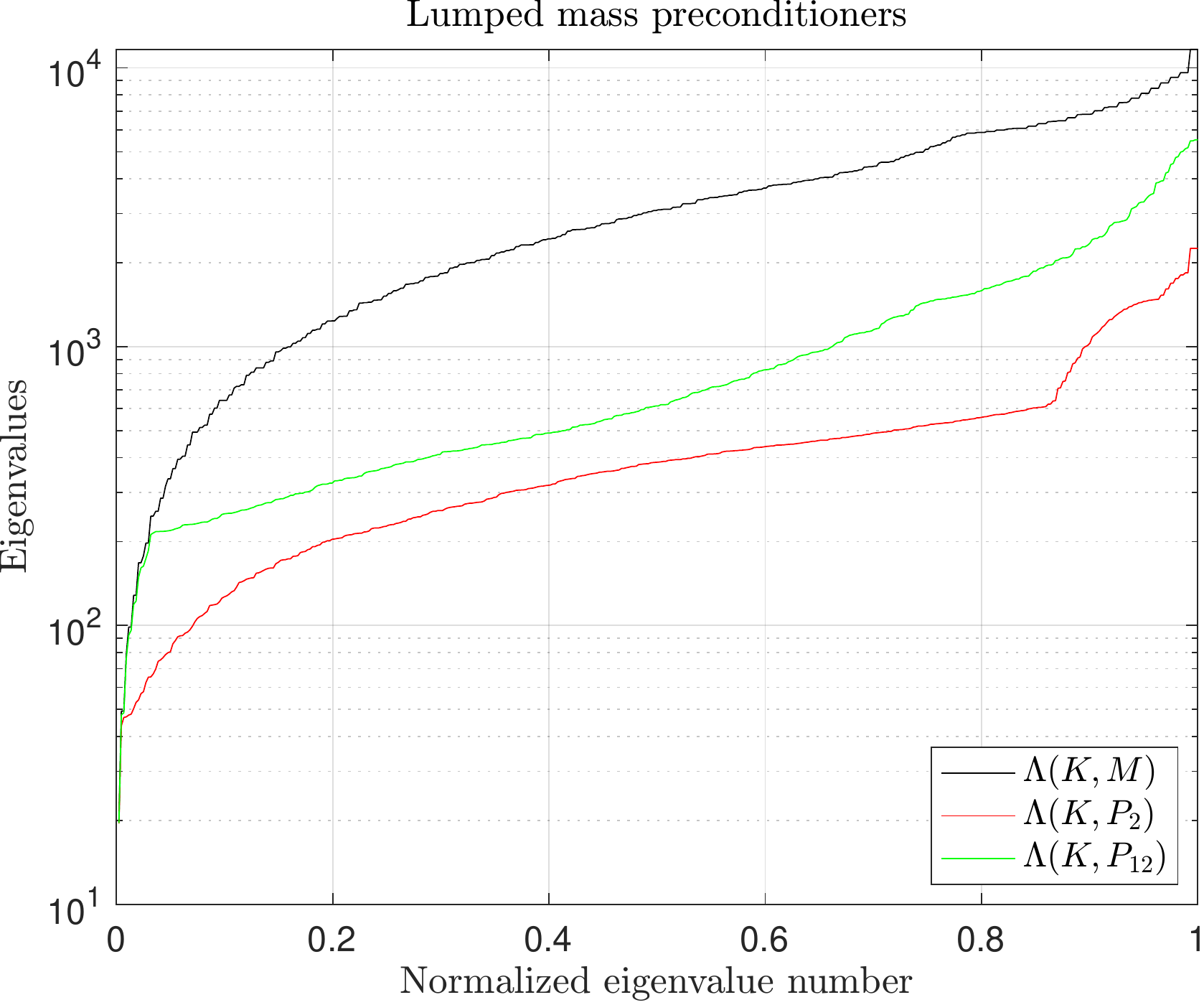}
    \caption{Comparison of $\Lambda(K, M)$, $\Lambda(K, P_2)$ and $\Lambda(K, P_{12})$ for $p=3$}
    \label{fig: 2D_Laplace_unit_square_LM_comp_P2_P12_p3_n20}
\end{figure}
\end{example}

\begin{example}[Elastodynamics - 2D]
As an example for a two-dimensional initial boundary value problem, we consider the wave equation on a quarter annulus with inner radius $1$ and outer radius $2$. This example is again inspired from \citep[][Section 7.3]{evans2018explicit}. The forcing term and initial conditions are chosen such that the problem admits the exact solution $u(x,y,t)=(x^2+y^2-1)(x^2+y^2-2^2)\sin(x)\sin(y)\sin(2\pi t)$. The problem is discretized using cubic B-splines and 20 subdivisions in each parametric direction. Although the domain has curved boundaries, the geometry mapping is a separable function and consequently the mass matrix is exactly expressed as a Kronecker product \cite{gao2014fast}. Similarly to Example \ref{ex: elastodynamics_1D}, the solution of the semi-discrete problem is approximated using the central difference method, resulting in $322$, $139$, $289$ and $320$ time steps for $M$, $P_{11}$, $P_{22}$ and $P_{33}$, respectively. The $L^2$ norm of the error $u_h(x,t)-u(x,t)$ over the time span $[0, 6]$ is shown in Figure \ref{fig: 2D_Laplace_ring_ML_L2_error_p3_n20}. As expected, it increases over time although non-monotonically. A snapshot of the solutions at time $t=0.64$ and $t=2.55$ is displayed in Figures \ref{fig: 2D_Laplace_ring_ML_disp_time_0_64_p3_n20} and \ref{fig: 2D_Laplace_ring_ML_disp_time_2_55_p3_n20}, respectively. At small times, the solutions are visually nearly identical while differences start appearing at larger times.

\begin{figure}[H]
    \centering
    \includegraphics[scale=0.50]{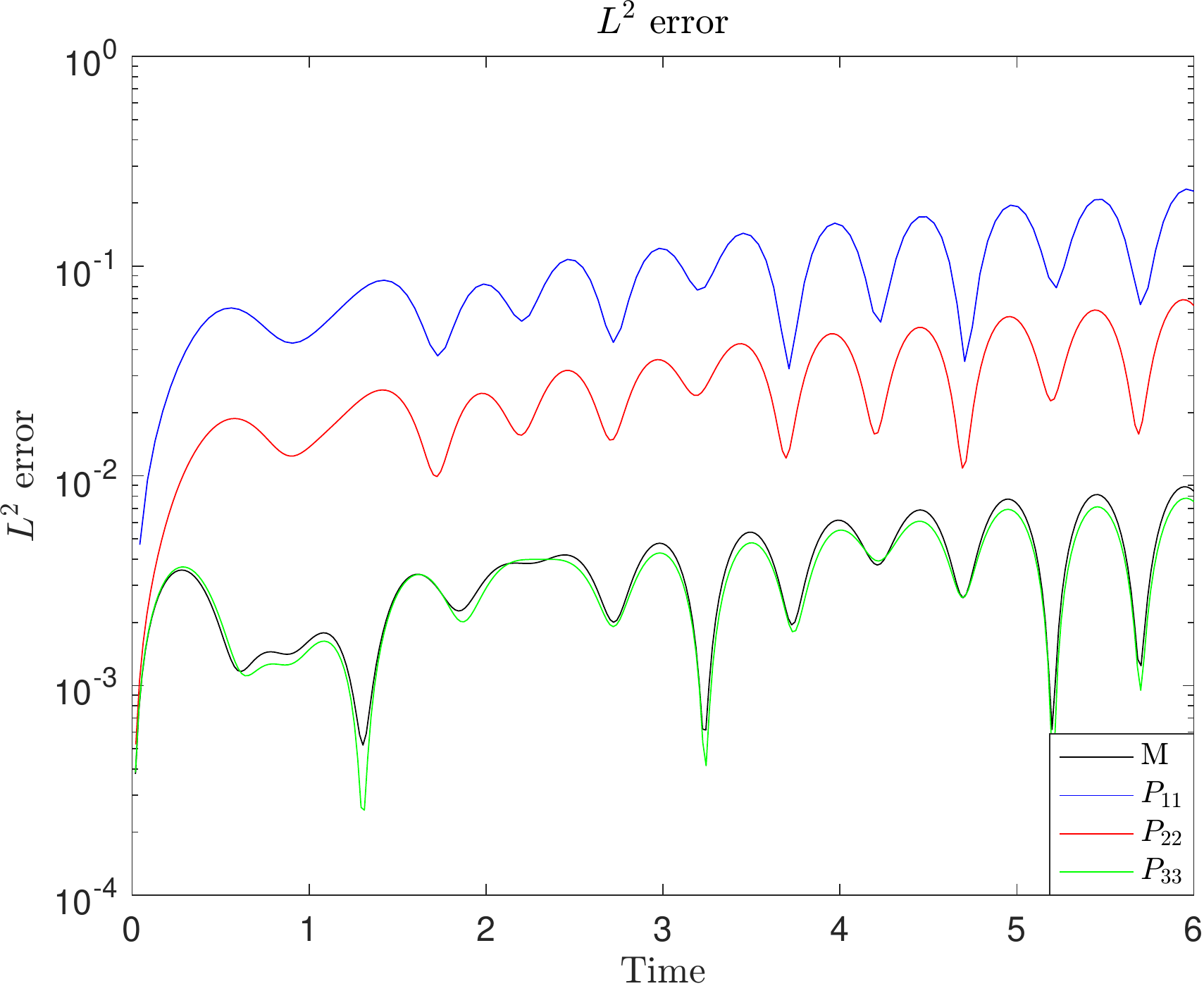}
    \caption{$L^2$ error over the time span $[0, 6]$}
    \label{fig: 2D_Laplace_ring_ML_L2_error_p3_n20}
\end{figure}

\begin{figure}[H]
    \centering
    \includegraphics[scale=0.39]{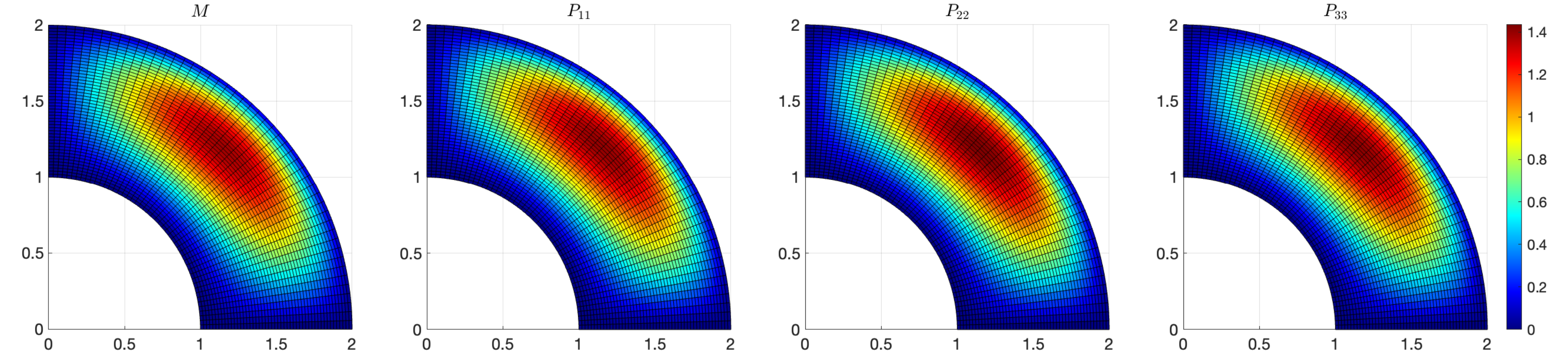}
    \caption{Displacement at time $t=0.64$}
    \label{fig: 2D_Laplace_ring_ML_disp_time_0_64_p3_n20}
\end{figure}

\begin{figure}[H]
    \centering
    \includegraphics[scale=0.39]{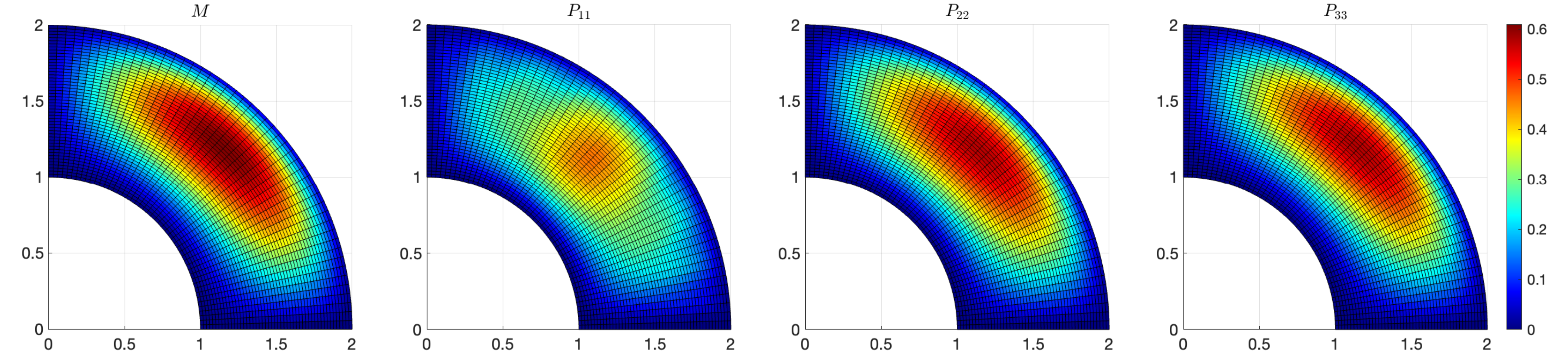}
    \caption{Displacement at time $t=2.55$}
    \label{fig: 2D_Laplace_ring_ML_disp_time_2_55_p3_n20}
\end{figure}
\end{example}

\begin{example}[Accuracy test - 1D and 2D]
\label{ex: accuracy_test}
As we have discussed in Section \ref{se: introduction}, the row-sum technique leads to a second order convergence rate for the smallest eigenvalues (at least for 1D problems and for quadratic and cubic B-spline discretizations \cite{cottrell2006isogeometric}). This property is now investigated numerically for our new class of lumped mass preconditioners for both 1D and 2D model problems. We consider a cubic isogeometric discretization of the 1D and 2D Laplace on the unit line and unit square, respectively, with mixed boundary conditions. The smallest eigenfrequency $\omega_1=\sqrt{\lambda_1}$ for these model problems is given by
\begin{equation*}
    \omega_1=\frac{\pi}{2} \text{ (1D)}, \qquad \omega_1=\frac{\pi}{\sqrt{2}} \text{ (2D)}.
\end{equation*}
We report the relative error $\frac{\omega_1-\omega_{h,1}}{\omega_1}$ in Figures \ref{fig: ML_1D_relative_error_standard_p3} and \ref{fig: ML_2D_relative_error_standard_p3} for our 1D and 2D problems, respectively. Unfortunately, our class of lumped mass preconditioners does not seem to improve the convergence rate but may improve the constant by several orders of magnitude. This fact is interesting given how close $P_3$ (or $P_{33}$) is to the consistent mass matrix $M$. Small differences are enough to ruin the convergence rate, which we expect is due to the consistency error of the method. The same trend was also observed for other types of boundary conditions. 

\begin{center}
\begin{minipage}[t]{.48\linewidth}
\vspace{0pt}
\begin{figure}[H]
    \centering
    \includegraphics[width=\textwidth]{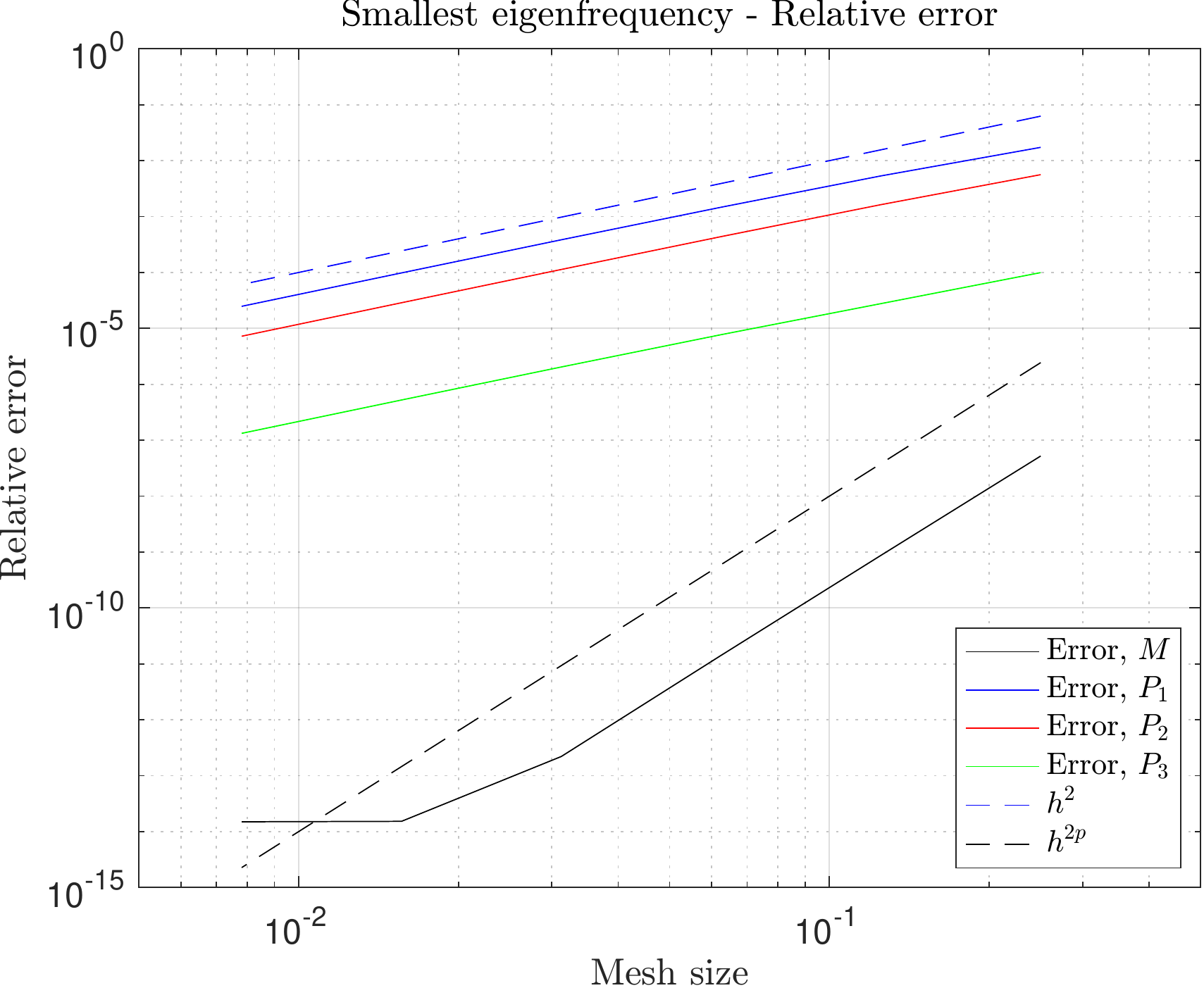}
    \caption{Relative error in the smallest eigenfrequency for a cubic ($p=3$) discretization of the 1D Laplace on the unit line with mixed boundary conditions}
    \label{fig: ML_1D_relative_error_standard_p3}
\end{figure}
\end{minipage}
\hspace{2pt}
\begin{minipage}[t]{.48\linewidth}
\vspace{0pt}
\begin{figure}[H]
    \centering
    \includegraphics[width=\textwidth]{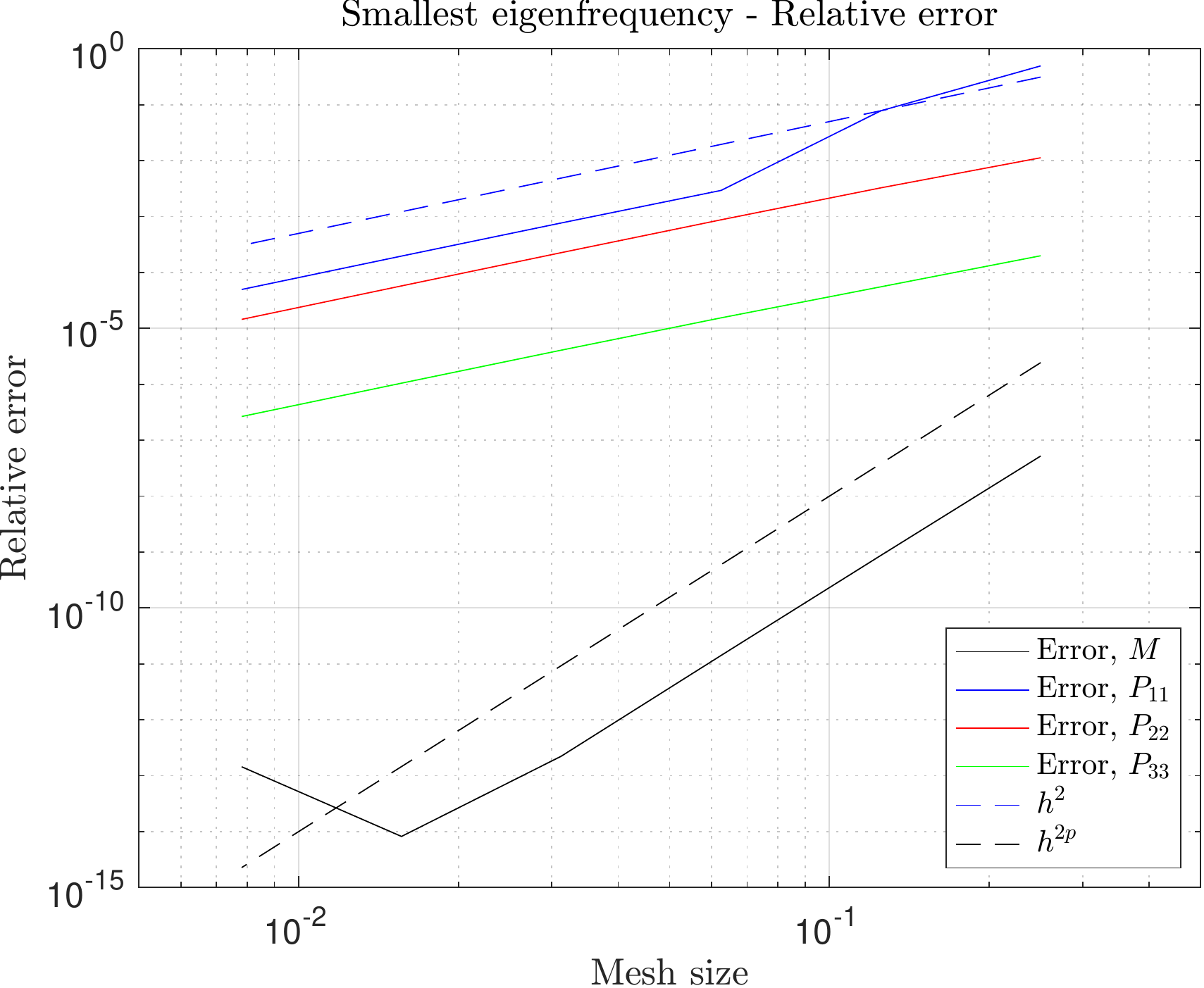}
    \caption{Relative error in the smallest eigenfrequency for a cubic ($p=3$) discretization of the 2D Laplace on the unit square with mixed boundary conditions}
    \label{fig: ML_2D_relative_error_standard_p3}
\end{figure}
\end{minipage}
\end{center}
\end{example}

\begin{remark}[Vector PDEs]
Vector PDEs, such as those modeling the elastic deformation of structures, again require a slight adjustment to the definition of the preconditioner due to the specific structure of the mass matrix. If $M_S$ denotes the mass matrix of a multidimensional scalar PDE, and if the same approximation spaces are used for each component of the approximate solution of the vector PDE, then the mass matrix of a vector problem $M_V$ is either $I_d \otimes M_S$ or $M_S \otimes I_d$ depending on the ordering of the basis functions \cite{voet2020nonlinear}. With the first definition $M_V$ is a block diagonal matrix. This ordering is used in GeoPDEs for instance \cite{vazquez2016new}. In order to preserve the structure, the preconditioner is naturally defined as $P_V=I_d \otimes P_S$ or $P_V=P_S \otimes I_d$. Again, due to the Kronecker product structure, all the properties of $P_S$ carry over to $P_V$. Numerical testing for elasticity problems produced figures similar to those reported in Example \ref{ex: preconditioners_Pij_2D} and are omitted.
\end{remark}

\section{Nearest Kronecker product approximation}
\label{se: NKP}
As we have seen in Section \ref{subse: multidimensional_pb}, the performance of the lumped mass preconditioners can be drastically improved for tensor product basis functions if the mass matrix is expressed as a Kronecker product. Unfortunately, this structure cannot be guaranteed for most problems of practical interest. In several cases, the entries of the mass matrix are computed from
\begin{equation}
    M_{ij}=\int_{\hat{\Omega}} \hat{\rho}(\hat{\mathbf{x}}) \hat{B}_i(\hat{\mathbf{x}}) \hat{B}_j(\hat{\mathbf{x}}) |\det(J_F(\hat{\mathbf{x}}))| \ \mathrm{d}\hat{\Omega} \quad i,j=1,\dots,n^2. \label{eq: mass_mat_entries}
\end{equation}
The integration is done in the parametric domain $\hat{\Omega}$ where $\hat{B}_i(\hat{\mathbf{x}})$ are tensor product basis functions defined in the parametric domain (for instance B-splines in isogeometric analysis), $\hat{\rho}(\hat{\mathbf{x}})=\rho(F(\hat{\mathbf{x}}))$ is the density function, $F \colon \hat{\Omega} \to \Omega$ is the mapping from parametric to physical domain and $J_F(\hat{\mathbf{x}})$ is its Jacobian matrix. The mass matrix looses its Kronecker product structure when the integrand is no longer a separable function. A nontrivial density function, geometry or the use of NURBS basis functions are some of the causes. Nevertheless, for tensor product basis functions, the mass (and stiffness) matrix can be generally very well approximated by a sum of Kronecker products, a fact which has already been exploited for designing fast assemblage algorithms \citep{gao2014fast,mantzaflaris2017low, scholz2018partial, hofreither2018black,chan2018multi}. For 2D problems, we are interested in finding the best factor matrices $B$ and $C$ such that $M \approx \Tilde{M} = B \otimes C$. Several different approaches have been proposed in the literature. The one proposed in \cite{mantzaflaris2017low} and later simplified in \cite{scholz2018partial}, is tailored to PDE problems and attempts to directly approximate all non-separable functions in the integral \eqref{eq: mass_mat_entries} by a sum of separable functions. In other words, it finds a low-rank approximation for the integrand itself. Another approach, used in \cite{hofreither2018black} and based on earlier work from \citep{van1993approximation, pitsianis1997kronecker}, is more general and does not require any knowledge of the origin of the matrix. Of course, the first approach seems better suited to our problem. However, its implementation, although simplified, remains technical. The implementation of the second approach, in comparison, is rather straightforward. The method relies entirely on standard matrix operations. For this reason, we will assess if the easiest method can deliver useful results. We will initially restrict the discussion to 2D problems by closely following the presentation in \cite{van1993approximation}. The extension to 3D problems will be discussed subsequently.

\subsection{Approximation for 2D problems}
\label{se: KP_approx_2D}
Given a matrix $M \in \mathbb{R}^{n_1n_2 \times n_1n_1}$, our goal is to find the best factor matrices $B \in \mathbb{R}^{n_1 \times n_1}$ and $C \in \mathbb{R}^{n_2 \times n_2}$ such that $\phi_M(B,C)=\|M-B \otimes C\|_F$ is minimized. This problem was first investigated by Van Loan and Pitsianis \cite{van1993approximation}. They observed that since both the Kronecker product $B \otimes C$ and $\vectorization(B)\vectorization(C)^T$ form all the products $b_{ij}c_{kl}$ for $i,j=1,\dots,n_1$ and $k,l=1,\dots,n_2$ but at different locations, there must clearly exist a linear mapping $\mathcal{R} \colon \mathbb{R}^{n_1n_2 \times n_1n_2} \to \mathbb{R}^{n_1^2 \times n_2^2}$ such that $\mathcal{R}(B \otimes C)=\vectorization(B)\vectorization(C)^T$. This mapping can be defined explicitly. Consider the following block matrix where $A_{ij} \in \mathbb{R}^{n_2 \times n_2}$ for $i,j=1,\dots,n_1$ and define the mapping as
\begin{equation*}
    A =
    \begin{pmatrix}
    A_{1,1} & \cdots & A_{1, n_1} \\
    \vdots & \ddots & \vdots \\
    A_{n_1,1} & \cdots & A_{n_1, n_1}
    \end{pmatrix}
    \qquad
    \mathcal{R}(A)=
    \begin{pmatrix}
    \vectorization(A_{1,1})^T \\
    \vectorization(A_{2,1})^T \\
    \vdots \\
    \vectorization(A_{n_1, n_1})^T
    \end{pmatrix}.
\end{equation*}
Then, by construction, for a matrix $A=B \otimes C$,
\begin{equation*}
    B \otimes C =
    \begin{pmatrix}
    b_{1,1}C & \cdots & b_{1,n_1}C \\
    \vdots & \ddots & \vdots \\
    b_{n_1,1}C & \cdots & b_{n_1, n_1}C
    \end{pmatrix}
    \qquad \mathcal{R}(B \otimes C)=     
    \begin{pmatrix}
    b_{1,1}\vectorization(C)^T \\
    b_{2,1}\vectorization(C)^T \\
    \vdots \\
    b_{n_1, n_1}\vectorization(C)^T
    \end{pmatrix}
    =\vectorization(B)\vectorization(C)^T.
\end{equation*}
More generally, since the mapping $\mathcal{R}$ inherits the linearity of the vectorization operator, it transforms a sum of $r$ Kronecker products into a rank $r$ matrix. A matrix is said to have Kronecker rank $r$ if it can be expressed as the sum of $r$ Kronecker products. Thus, $A$ has Kronecker rank $r$ if and only if $\mathcal{R}(A)$ has (matrix) rank $r$. The mapping $\mathcal{R}$ was originally called \textit{rearrangement} in \cite{van1993approximation} but later authors have also called it \textit{reordering}. In any case, we must stress that $\mathcal{R}(A)$ is generally not a permutation of the rows and columns of $A$ or any change of basis. 

The minimization of $\|M-B \otimes C\|_F$ in the Frobenius norm allows to rewrite the problem in a more familiar way. The Frobenius norm is defined as the square root of the sum of squares of the matrix entries and does not depend on the location of these entries. Therefore, the mapping $\mathcal{R}$ can be applied to a matrix without affecting its Frobenius norm and the minimization problem becomes
\begin{equation*}
    \min \phi_M(B,C)=\min \|M-B \otimes C\|_F=\min \|\mathcal{R}(M)-\mathcal{R}(B \otimes C)\|_F =\min \|\mathcal{R}(M)-\vectorization(B)\vectorization(C)^T\|_F.
\end{equation*}
Thus, finding the best factor matrices $B$ and $C$ is equivalent to finding the best rank-$1$ approximation of $\mathcal{R}(M)$. The transformation from $M$ to $\mathcal{R}(M)$ requires knowing the block-partitioning of the matrix $M$, which in the context of isogeometric analysis is always clear from the underlying discretization. The next theorem summarizes a very useful result in this context.

\begin{theorem}[{\citep[][Theorems 5.1, 5.3 and 5.8]{van1993approximation}}]
Let $M \in \mathbb{R}^{n_1n_2 \times n_1n_2}$ be a block-banded, nonnegative and symmetric positive definite matrix. Then, there exists banded, nonnegative and symmetric positive definite factor matrices $B$ and $C$ such that $\phi_M(B,C)=\|M-B \otimes C\|_F$ is minimized.
\end{theorem}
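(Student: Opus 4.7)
The plan is to reduce the problem to the best rank-$1$ Frobenius approximation of the rearranged matrix $\mathcal{R}(M) \in \mathbb{R}^{n_1^2 \times n_2^2}$ introduced just before the statement. Since $\mathcal{R}$ is a Frobenius isometry and $\mathcal{R}(B \otimes C) = \vectorization(B)\vectorization(C)^T$, minimizing $\phi_M(B,C)$ amounts to finding a best rank-$1$ approximation of $\mathcal{R}(M)$, which by the Eckart--Young theorem is $\sigma_1 \mathbf{u}_1\mathbf{v}_1^T$ for the top singular triple $(\sigma_1, \mathbf{u}_1, \mathbf{v}_1)$. This produces candidate factors $\vectorization(B) = \sqrt{\sigma_1}\,\mathbf{u}_1$ and $\vectorization(C) = \sqrt{\sigma_1}\,\mathbf{v}_1$, up to a joint sign ambiguity. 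It remains to verify, property by property, that these factors inherit bandedness, nonnegativity, symmetry, and positive definiteness from $M$.

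Bandedness and nonnegativity are the cleanest parts. The rearrangement turns the block structure of $M$ into a row/column sparsity pattern of $\mathcal{R}(M)$: each block $A_{ij}$ becomes a row, and each within-block position indexes a column. Block-bandedness of $M$ therefore zeros out many rows of $\mathcal{R}(M)$, and the shared banded pattern of the nonzero blocks (inherited from the tensor-product origin of $M$) zeros out many columns. Since zero rows and columns contribute nothing to the dominant singular triple, $\mathbf{u}_1$ and $\mathbf{v}_1$ must vanish at those indices, which translates into banded $B$ and $C$. For nonnegativity, I would invoke Perron--Frobenius for the nonnegative matrix $\mathcal{R}(M)^T\mathcal{R}(M)$: its dominant eigenvector $\mathbf{v}_1$ can be chosen entrywise nonnegative, and then $\mathbf{u}_1 \propto \mathcal{R}(M)\mathbf{v}_1$ is also nonnegative, so $B$ and $C$ are entrywise nonnegative.

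The symmetric positive definite part is the main obstacle. Symmetry of $M$ induces a ``double symmetry'' on $\mathcal{R}(M)$, namely that its entry at $((i,j),(k,l))$ equals the one at $((j,i),(l,k))$. The key step is to show that this invariance is preserved by the extremal Rayleigh quotient defining $\sigma_1$, so that whenever $(\mathbf{u},\mathbf{v})$ is a top singular pair, the vector obtained by reshaping and transposing is also optimal. One can then symmetrize within the leading singular subspace (which is invariant under this reshape-transpose involution) to select representatives $\mathbf{u}_1, \mathbf{v}_1$ that reshape to symmetric $B$ and $C$. Positive definiteness then follows from the requirement that $B \otimes C$ approximate the positive definite $M$: a joint sign flip $(B,C) \to (-B,-C)$ leaves the product unchanged, so the nonnegativity choice combined with this freedom forces the dominant Perron eigenvalues of $B$ and $C$ to be positive, while the identity $\Lambda(B\otimes C) = \Lambda(B)\Lambda(C)$ together with the closeness to the positive spectrum of $M$ rules out any indefinite factors.

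The hard part will be the symmetry step, especially when $\sigma_1$ is not a simple singular value of $\mathcal{R}(M)$: one has to work inside the leading singular subspace and guarantee the existence of a symmetric fixed point of the reshape-transpose involution. This is precisely the content of Van Loan--Pitsianis Theorem~5.8, whereas bandedness (Theorem~5.1) and nonnegativity (Theorem~5.3) follow from sparsity patterns and Perron--Frobenius as sketched.
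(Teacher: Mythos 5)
First, a point of reference: the paper does not prove this statement at all --- it is imported verbatim from Van Loan and Pitsianis (their Theorems 5.1, 5.3 and 5.8) --- so the only meaningful comparison is with that source. Your overall strategy (pass to $\mathcal{R}(M)$, invoke Eckart--Young for the best rank-one approximation, and show that each structural property of $M$ is inherited by the dominant singular pair) is exactly their route. The bandedness and nonnegativity parts are essentially right: a zero row of $\mathcal{R}(M)$ forces the corresponding entry of $\mathbf{u}_1\propto\mathcal{R}(M)\mathbf{v}_1$ to vanish (and likewise for columns and $\mathbf{v}_1$), and Perron--Frobenius applied to the nonnegative matrix $\mathcal{R}(M)^T\mathcal{R}(M)$ yields an entrywise nonnegative dominant pair. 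One minor caveat: block-bandedness of $M$ by itself only zeroes \emph{rows} of $\mathcal{R}(M)$ and hence only gives a banded $B$; bandedness of $C$ requires the individual blocks to be banded, which is a separate hypothesis in Van Loan--Pitsianis (the paper's own statement is loose on this point).

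The genuine gap is in the symmetric positive definite part. Your symmetry step is a statement of intent rather than an argument --- the degenerate case of a non-simple $\sigma_1$ is deferred to the very theorem being proved --- and your positive-definiteness argument is not valid. ``Closeness to the positive spectrum of $M$'' proves nothing: the optimal error $\|M-B\otimes C\|_F=\bigl(\sum_{i\geq 2}\sigma_i^2\bigr)^{1/2}$ carries no a priori bound relative to $\lambda_1(M)$, and indeed the paper itself observes in Section \ref{se: KP_approx_2D} that the error $E=\tilde{M}-M$ is in general indefinite and that $\lambda_n(M,\tilde{M})$ can exceed $1$; no perturbation argument can therefore force definiteness of the factors. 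The argument that actually works goes through the variational identity $\sigma_1=\max\{\langle M,\,U\otimes V\rangle_F : \|U\|_F=\|V\|_F=1\}$: writing $U=\sum_i\lambda_i\mathbf{x}_i\mathbf{x}_i^T$ and $V=\sum_j\mu_j\mathbf{y}_j\mathbf{y}_j^T$ gives $\langle M, U\otimes V\rangle_F=\sum_{i,j}\lambda_i\mu_j\,(\mathbf{x}_i\otimes\mathbf{y}_j)^TM(\mathbf{x}_i\otimes\mathbf{y}_j)$ with every quadratic form strictly positive, so replacing each $\lambda_i,\mu_j$ by its absolute value cannot decrease the objective while preserving the constraints; hence a maximizer with positive semidefinite (and, with a further argument, definite) symmetric factors exists. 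This eigenvalue sign-flipping argument, not a perturbation bound, is the missing ingredient.
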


Thus, the properties of $M$ are inherited by its approximation $B \otimes C$. Once the Kronecker product approximation has been computed, the lumped mass preconditioners can be constructed from the factor matrices $B$ and $C$ following Definition \ref{def:preconditioners_Pi}. The overall construction consists in a two-level approximation of the mass matrix. Algorithm \ref{algo: Low_rank_approx} is a practical algorithm for computing the preconditioner. For simplicity, we will assume that the preconditioners of the factor matrices have equal bandwidth.

\begin{algorithm}[ht]
\begin{algorithmic}[1]
\caption{Lumped mass and Kronecker product approximation}
\label{algo: Low_rank_approx}
\Statex \textbf{Input}: Mass matrix $M$
\Statex \textbf{Output}: Preconditioner $\tilde{P}_{ii}=\tilde{P}_{1,i} \otimes \tilde{P}_{2,i}$
\State Compute $\mathcal{R}(M)$ 
\State Compute a rank-1 approximation of $\mathcal{R}(M)$ such that $\mathcal{R}(M) \approx \mathbf{b}\mathbf{c}^T$
\State Reshape the vectors $\mathbf{b}$ and $\mathbf{c}$ to matrices $B$ and $C$, respectively, and set $\Tilde{M}=B \otimes C$
\State Construct the preconditioners $\tilde{P}_{1,i}$ and $\tilde{P}_{2,i}$ from $B$ and $C$, respectively, following Definition \ref{def:preconditioners_Pi}
\State Set $\tilde{P}_{ii}=\tilde{P}_{1,i} \otimes \tilde{P}_{2,i}$
\end{algorithmic}
\end{algorithm}

More specifically, a practical implementation in Line 2 would extract a dense submatrix from $\mathcal{R}(M)$ containing all its nonzero entries and then compute a low-rank approximation of that submatrix only. The truncated singular value decomposition (SVD) is an obvious choice for computing a low-rank approximation. However, there exists a host of other black-box algorithms for computing good but not optimal low-rank approximations. Some of them do not require the matrix to be explicitly available. It was the main motivation in \cite{hofreither2018black} for using Adaptive Cross Approximation (ACA). In our context, it means we could potentially compute a good Kronecker product approximation of the mass matrix without ever assembling it. In Line 5, the preconditioner $\tilde{P}_{ii}$ is not necessarily formed explicitly. Only the factors $\tilde{P}_{1,i}$ and $\tilde{P}_{2,i}$ are stored. \\

\textbf{Complexity analysis}: 
The cost of Algorithm \ref{algo: Low_rank_approx} depends on whether it is intended as an addition to the assembly of the mass matrix or as a substitution for it. In the first case, the cost incurred is the sum of the assembly cost and the cost for computing a rank-1 approximation of $\tilde{\mathcal{R}}(M)$, the dense submatrix of $\mathcal{R}(M)$ containing all its nonzero entries, while in the second case some of the existing assembly algorithms can be straightforwardly used to compute a Kronecker rank-1 approximation of the mass matrix \citep{hofreither2018black, mantzaflaris2017low, scholz2018partial}. Both cases are briefly discussed. In the first case, assuming that the mass matrix has already been assembled in $O(a)$, the cost of Algorithm \ref{algo: Low_rank_approx} depends on the algorithm used for computing a rank-1 approximation of $\tilde{\mathcal{R}}(M)$. This matrix has $O(n_1p_1)$ rows and $O(n_2p_2)$ columns, where $n_i$ and $p_i$ are the basis dimension and spline order, respectively, in the $i$th parametric direction \cite[][Section 3.2]{hofreither2018black}. Since only a rank-1 approximation of $\tilde{\mathcal{R}}(M)$ is sought, the SVD Lanczos process is a viable option, as suggested in \cite{van1993approximation}. Assuming that $k$ iterations of SVD Lanczos are needed, then, for uniform discretization parameters ($n_1=n_2=n$ and $p_1=p_2=p$), the overall cost is $O(a+kn^2p^2)$. The additional cost incurred compares favorably with existing assembly algorithms, where the assembly cost typically ranges from $O(rn^2p^2)$ to $O(n^2p^6)$ for 2D discretizations, where $r$ is an integer; see e.g. \citep{mantzaflaris2017low, calabro2017fast, antolin2015efficient, pan2022efficient}. Instead of forming the mass matrix explicitly, one may directly turn to low-rank matrix and tensor decompositions, which are at the core of several existing assembly algorithms \citep{hofreither2018black, mantzaflaris2017low, scholz2018partial}. For instance, ACA with partial pivoting is used in \cite{hofreither2018black} for computing a low-rank approximation of $\tilde{\mathcal{R}}(M)$. For a rank-1 approximation, the cost amounts to evaluating a single row and column of $M$, which is $O(np^5)$ if Gauss quadrature is used for evaluating its entries \cite{hofreither2018black}. However, the approximation may be quite poor in comparison to the best rank-1 approximation obtained by truncating the SVD of $\tilde{\mathcal{R}}(M)$. \\

\textbf{Error analysis}:
Clearly, we can expect the approximation error of $M$ by $B \otimes C$ to be related to the low-rank approximation error of $\mathcal{R}(M)$ by $\vectorization(B)\vectorization(C)^T$. The next argument attempts to formalize this intuition. Assuming that $\mathcal{R}(M)$ has rank $r$, its reduced singular value decomposition (SVD) is
\begin{equation*}
    \mathcal{R}(M)=U_r\Sigma_{rr} V_r^T = \sigma_1 \mathbf{u}_1\mathbf{v}_1^T+U_{r-1}\Sigma_{r-1,r-1}V_{r-1}^T
\end{equation*}
where $U_r=[\mathbf{u}_1, \dots, \mathbf{u}_r]$ and $V_r=[\mathbf{v}_1, \dots, \mathbf{v}_r]$ have orthonormal columns and are formed by the left and right singular vectors, respectively, and $\Sigma_{rr}=\diag(\sigma_1, \dots, \sigma_r)$ is the diagonal matrix of decreasingly ordered singular values. In Algorithm \ref{algo: Low_rank_approx}, Line 2, we may set for instance $\mathbf{b}=\sqrt{\sigma_1}\mathbf{u}_1$ and $\mathbf{c}=\sqrt{\sigma_1}\mathbf{v}_1$. Then, by construction, the Frobenius norm of the error $E=\Tilde{M}-M=B \otimes C-M$ is given by
\begin{equation*}
    \|E\|_F=\|\mathcal{R}(E)\|_F=\|\mathcal{R}(M)-\sigma_1 \mathbf{u}_1\mathbf{v}_1^T\|_F=\|U_{r-1}\Sigma_{r-1,r-1}V_{r-1}^T\|_F=\sqrt{\sum_{i=2}^r \sigma_i^2}.
\end{equation*}
Our ability to approximate the spectrum of $M$ also depends on the low-rank approximation error of $\mathcal{R}(M)$. This fact is a direct consequence of \citep[][Corollary 6.3.8]{horn2012matrix}, applied to the symmetric matrices $M$ and $\tilde{M}$
\begin{equation*}
    \sum_{i=1}^n (\lambda_i(M)-\lambda_i(\Tilde{M}))^2 \leq \|E\|_F^2 = \sum_{i=2}^r \sigma_i^2.
\end{equation*}
The error we are committing will surely affect all eigenvalues. However, our main concern when approximating $M$ by $\tilde{M}$ is to guarantee that $\lambda_k(K, \tilde{M})$ remains close to $\lambda_k(K, M)$ for the smallest eigenvalues while underestimating the largest eigenvalues. We have already noted in Corollary \ref{cor:equivalence_conditions} that if the error $E=\tilde{M}-M$ is symmetric positive semidefinite, then $\lambda_k(K, \tilde{M}) \leq \lambda_k(K, M)$. Unfortunately, this property is not satisfied by the nearest Kronecker product approximation, the error matrix $E$ being in general indefinite, and the eigenvalues of $(K,\tilde{M})$ might be greater than those of $(K,M)$. However, as we have seen in Example \ref{ex: indefinite_error}, the positive semidefiniteness of the error is not a necessary condition. Moreover, our strategy relies on a two-level approximation of $M$. It is first approximated by a Kronecker structured matrix $\tilde{M}$ and then $\tilde{M}$ is approximated by a lumped mass preconditioner $\tilde{P}_{ii}$. The relation between $\lambda_k(K,\tilde{M})$ and $\lambda_k(K,\tilde{P}_{ii})$ is well understood thanks to Theorem \ref{th:family_lumped_mass_prec_ext}, guaranteeing that $\lambda_k(K,\tilde{P}_{ii}) \leq \lambda_k(K,\tilde{M})$. The problem of understanding the relation between $\lambda_k(K,M)$ and $\lambda_k(K,\tilde{M})$ once again amounts to assessing the quality of the preconditioner $\tilde{M}$. From \eqref{eq:eig_pert_bounds_mass},
\begin{equation*}
    \lambda_1(M, \Tilde{M}) \leq \frac{\lambda_k(K, \Tilde{M})}{\lambda_k(K, M)} \leq \lambda_n(M, \Tilde{M}).
\end{equation*}
Although $\lambda_n(M, \tilde{M})$ may be greater than 1 due to the indefiniteness of the error, we will show numerically that it remains very close to 1. The use of Kronecker product approximations as preconditioners is not a new idea. It has already been explored for various applications including image processing \cite{nagy2006kronecker}, Markov chains \cite{langville2004akronecker}, stochastic Galerkin finite element discretizations \cite{ullmann2010kronecker} and isogeometric analysis \citep{gao2014fast, loli2021easy, sangalli2016isogeometric}. However, only few authors have undertaken the challenge of obtaining bounds for the preconditioned spectrum and most attempts are application specific. The following theorem is a variation of a general result presented in \citep[][Lemma 5.3]{ullmann2010kronecker} obtained by exhibiting the singular values and giving a more explicit bound on the condition number of the preconditioned matrix. 

\begin{theorem}
\label{th:cond_NKP_prec}
Let $M=\sum_{i=1}^r B_i \otimes C_i$ be a symmetric positive definite matrix of Kronecker rank $r$ where each factor matrices $B_i, C_i$ are symmetric for $i=1,\dots,r$ and let $\mathcal{R}(M)=\sum_{i=1}^r \sigma_i \mathbf{u}_i\mathbf{v}_i^T$ be the reduced singular value decomposition of the reordered matrix. Let $\tilde{M}$ be the nearest Kronecker product approximation. Then, assuming that $\delta=\sum_{i=2}^r \frac{\sigma_i}{\sigma_1}\max_k |\lambda_k(U_i,U_1)| \max_k |\lambda_k(V_i,V_1)| <1$ where $\mathbf{u}_i=\vectorization(U_i)$ and $\mathbf{v}_i=\vectorization(V_i)$, it holds
\begin{equation*}
    \kappa(\tilde{M}^{-1/2}M\tilde{M}^{-1/2}) \leq \frac{1+\delta}{1-\delta}
\end{equation*}
where $\kappa$ denotes the spectral condition number.
\end{theorem}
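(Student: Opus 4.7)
The plan is to exploit the decomposition $\tilde{M}^{-1/2} M \tilde{M}^{-1/2} = I + \tilde{M}^{-1/2}(M-\tilde{M})\tilde{M}^{-1/2}$ and bound the spectral norm of the perturbation by $\delta$. Since the perturbation is symmetric, its eigenvalues lie in $[-\delta,\delta]$, so the eigenvalues of $\tilde{M}^{-1/2} M \tilde{M}^{-1/2}$ lie in $[1-\delta, 1+\delta]$. The hypothesis $\delta < 1$ ensures positivity of the smallest one, and the condition number is then bounded by the ratio $(1+\delta)/(1-\delta)$.

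First I would set up the symmetry structure. Since $M$ is symmetric and $M=\sum_{i=1}^r B_i \otimes C_i$ with symmetric $B_i,C_i$, the column space of $\mathcal{R}(M)$ is contained in the span of vectorized symmetric matrices, so the left singular vectors $\mathbf{u}_i$ reshape to symmetric matrices $U_i$; an analogous argument gives symmetric $V_i$. The nearest Kronecker product is $\tilde{M} = \sigma_1 U_1 \otimes V_1$, and for $\tilde{M}$ to be symmetric positive definite (as required for $\tilde{M}^{-1/2}$ to be well-defined) $U_1$ and $V_1$ must be definite; absorbing signs one may take them symmetric positive definite.

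Next I would compute the perturbation term-by-term. Using the identity $(U_1 \otimes V_1)^{-1/2} = U_1^{-1/2} \otimes V_1^{-1/2}$, which holds for SPD factors through simultaneous diagonalization, and the mixed-product property of the Kronecker product, each term in $M - \tilde{M} = \sum_{i=2}^r \sigma_i\, U_i \otimes V_i$ satisfies
\begin{equation*}
\tilde{M}^{-1/2}(U_i \otimes V_i)\tilde{M}^{-1/2} = \sigma_1^{-1}\bigl(U_1^{-1/2} U_i U_1^{-1/2}\bigr) \otimes \bigl(V_1^{-1/2} V_i V_1^{-1/2}\bigr).
\end{equation*}
Each factor is symmetric, so the Kronecker product is symmetric, and its spectral norm equals the product of the spectral norms of the two factors. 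The eigenvalues of $U_1^{-1/2} U_i U_1^{-1/2}$ are exactly the generalized eigenvalues $\lambda_k(U_i, U_1)$ (Lemma~\ref{lem:equivalent_pencils} applied to the congruence by $U_1^{-1/2}$), and similarly for the $V$ factor. Hence the spectral norm of the $i$th term is $\sigma_1^{-1} \max_k |\lambda_k(U_i, U_1)|\, \max_k |\lambda_k(V_i, V_1)|$.

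Finally, the triangle inequality for the spectral norm over $i=2,\dots,r$ yields
\begin{equation*}
\bigl\| \tilde{M}^{-1/2}(M-\tilde{M})\tilde{M}^{-1/2} \bigr\|_2 \le \sum_{i=2}^r \frac{\sigma_i}{\sigma_1}\, \max_k |\lambda_k(U_i, U_1)|\, \max_k |\lambda_k(V_i, V_1)| = \delta,
\end{equation*}
which completes the argument via the initial reduction. The only delicate step I anticipate is verifying that the SVD factors $U_1, V_1$ can indeed be chosen symmetric positive definite and $U_i, V_i$ symmetric—this is the standard hidden-symmetry feature of the rearrangement mapping applied to a symmetric matrix, but it needs to be stated explicitly so that square roots and the Kronecker identity are justified.
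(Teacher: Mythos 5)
Your proposal is correct and follows essentially the same route as the paper's proof: both express $M-\tilde{M}=\sum_{i=2}^r\sigma_i\,U_i\otimes V_i$ with symmetric reshaped singular vectors (your column-space argument is the same in substance as the paper's explicit expansion $\mathbf{u}_k=\sum_i\alpha_{ik}\mathbf{b}_i$), bound $\|\tilde{M}^{-1/2}M\tilde{M}^{-1/2}-I\|_2\le\delta$ via the mixed-product property and the triangle inequality, and conclude with the ratio $(1+\delta)/(1-\delta)$. The one point you flag as delicate—symmetry and positive definiteness of $U_1,V_1$—is handled in the paper by citing Van Loan and Pitsianis, Theorem 5.8, rather than re-derived.
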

\begin{proof}
By reverting the ordering, $M$ is expressed as a sum of $r$ Kronecker products
\begin{equation*}
    \mathcal{R}(M)=\sum_{i=1}^r \sigma_i \mathbf{u}_i\mathbf{v}_i^T \iff M=\sum_{i=1}^r \sigma_i (U_i \otimes V_i)
\end{equation*}
where $\mathbf{u}_i=\vectorization(U_i)$ and $\mathbf{v}_i=\vectorization(V_i)$ for $i=1,\dots,r$. Similarly, by construction, $\mathcal{R}(\tilde{M})=\sigma_1 \mathbf{u}_1\mathbf{v}_1^T$ and thus $\tilde{M}=\sigma_1 (U_1 \otimes V_1)$. The symmetry and positive definiteness of $U_1$ and $V_1$ has already been established in \citep[][Theorem 5.8]{van1993approximation}. We now establish symmetry of all $U_i$ and $V_i$ for $i=1,\dots,r$ using proof arguments similar to those presented in \citep[][Theorem 4.1]{langville2004akronecker}. By the singular value decomposition, $\mathcal{R}(M)\mathbf{v}_k=\sigma_k\mathbf{u}_k$ and $\mathcal{R}(M)^T\mathbf{u}_k=\sigma_k\mathbf{v}_k$ for $k=1,\dots,r$. Since $\mathcal{R}(M)=\sum_{i=1}^r \sigma_i \mathbf{u}_i\mathbf{v}_i^T=\sum_{i=1}^r \mathbf{b}_i\mathbf{c}_i^T$ with $\mathbf{b}_i=\vectorization(B_i)$ and $\mathbf{c}_i=\vectorization(C_i)$, we obtain
\begin{align*}
    \mathbf{u}_k&=\frac{1}{\sigma_k}\mathcal{R}(M)\mathbf{v}_k=\frac{1}{\sigma_k}\sum_{k=1}^r\mathbf{b}_i\mathbf{c}_i^T\mathbf{v}_k=\sum_{i=1}^r \alpha_{ik} \mathbf{b}_i, \\
    \mathbf{v}_k&=\frac{1}{\sigma_k}\mathcal{R}(M)^T\mathbf{u}_k=\frac{1}{\sigma_k}\sum_{k=1}^r\mathbf{c}_i\mathbf{b}_i^T\mathbf{u}_k=\sum_{i=1}^r \beta_{ik} \mathbf{c}_i,
\end{align*}
with $\alpha_{ik}=\sigma_k^{-1}\mathbf{c}_i^T\mathbf{v}_k$ and $\beta_{ik}=\sigma_k^{-1}\mathbf{b}_i^T\mathbf{u}_k$. By reshaping $\mathbf{u}_k$ and $\mathbf{v}_k$ to matrices, we obtain
\begin{equation*}
    U_k=\sum_{i=1}^r\alpha_{ik}B_i, \qquad V_k=\sum_{i=1}^r\beta_{ik}C_i.
\end{equation*}
Since all $B_i$ and $C_i$ are symmetric by assumption, so are $U_k$ and $V_k$ for all $k=1,\dots,r$. We now establish the bound on the condition number by noting that
\begin{align*}
    |\lambda_k(M,\tilde{M})-1| \leq \|\tilde{M}^{-1/2}M\tilde{M}^{-1/2}-I\|_2&=\bigg\|\sum_{i=2}^r \frac{\sigma_i}{\sigma_1}(U_1^{-1/2}U_iU_1^{-1/2} \otimes V_1^{-1/2}V_iV_1^{-1/2})\bigg\|_2 \\
    &\leq \sum_{i=2}^r \frac{\sigma_i}{\sigma_1}\|U_1^{-1/2}U_iU_1^{-1/2}\|_2 \|V_1^{-1/2}V_iV_1^{-1/2}\|_2 \\
    &=\sum_{i=2}^r\frac{\sigma_i}{\sigma_1}\max_k |\lambda_k(U_i,U_1)| \max_k |\lambda_k(V_i,V_1)|=\delta.
\end{align*}
Assuming that $\delta<1$, since $\tilde{M}^{-1/2}M\tilde{M}^{-1/2}$ is symmetric positive definite,
\begin{equation*}
    \kappa(\tilde{M}^{-1/2}M\tilde{M}^{-1/2}) =\frac{\lambda_n(M,\tilde{M})}{\lambda_1(M,\tilde{M})} \leq \frac{1+\delta}{1-\delta}.
\end{equation*}
\end{proof}
The previous theorem is not entirely satisfactory since $\delta$ still depends on the unknown quantities $\max_k |\lambda_k(U_i,U_1)|$ and $\max_k |\lambda_k(V_i,V_1)|$, which we were unable to bound in any convenient way. Nevertheless, our numerical experiments indicated that these quantities were $O(1)$. Most importantly, $\delta$ depends on the singular value ratios $\frac{\sigma_i}{\sigma_1}$ for $i=2,\dots,r$. Therefore, a large singular value gap ($\sigma_2 \ll \sigma_1$) is important for constructing a good preconditioner and the smallest singular values will not contribute much to the sum. The Appendix further expands upon the error committed by a Kronecker product approximation. The quality of the approximation is now assessed numerically on a few examples. 

\begin{example}[Low-rank approximation]
\label{ex: low_rank_2D}
We consider two 2D examples from isogeometric analysis on nontrivial geometries. A brief description is given below.
\begin{itemize}[noitemsep]
    \item \textbf{Example 1}: The domain is a stretched square already used by several other authors in \citep{gao2014fast, loli2021easy} (Figure \ref{fig: kite}).
    \item \textbf{Example 2}: The domain is a perturbed triangle featuring a re-entrant corner (Figure \ref{fig: reentrant_corner}).
\end{itemize}
We consider the same discretization parameters in all examples, namely cubic B-splines of maximal smoothness and $20$ subdivisions in each parametric direction. The factor matrices $B$ and $C$ are obtained by truncating the singular value decomposition of a dense submatrix extracted from $\mathcal{R}(M)$ and containing all its nonzero entries. The rank of $\mathcal{R}(M)$ is exactly $2$ for Example~1 since $|\det(J_F(\hat{\mathbf{x}})|$ is the sum of two separable functions for the stretched square domain \cite{gao2014fast}. The second example is more challenging with $\mathcal{R}(M)$ having numerical rank $22$. For each case, the discrete eigenvalues using the consistent mass matrix are compared to the approximate eigenvalues using the nearest Kronecker product (NKP) preconditioner with and without mass lumping. The curve using $\tilde{P}_{33}$ nearly overlaps with the one for the NKP preconditioner and is omitted.

\begin{center}
\begin{minipage}[t]{.48\linewidth}
\vspace{0pt}
\begin{figure}[H]
    \centering
    \includegraphics[width=\textwidth]{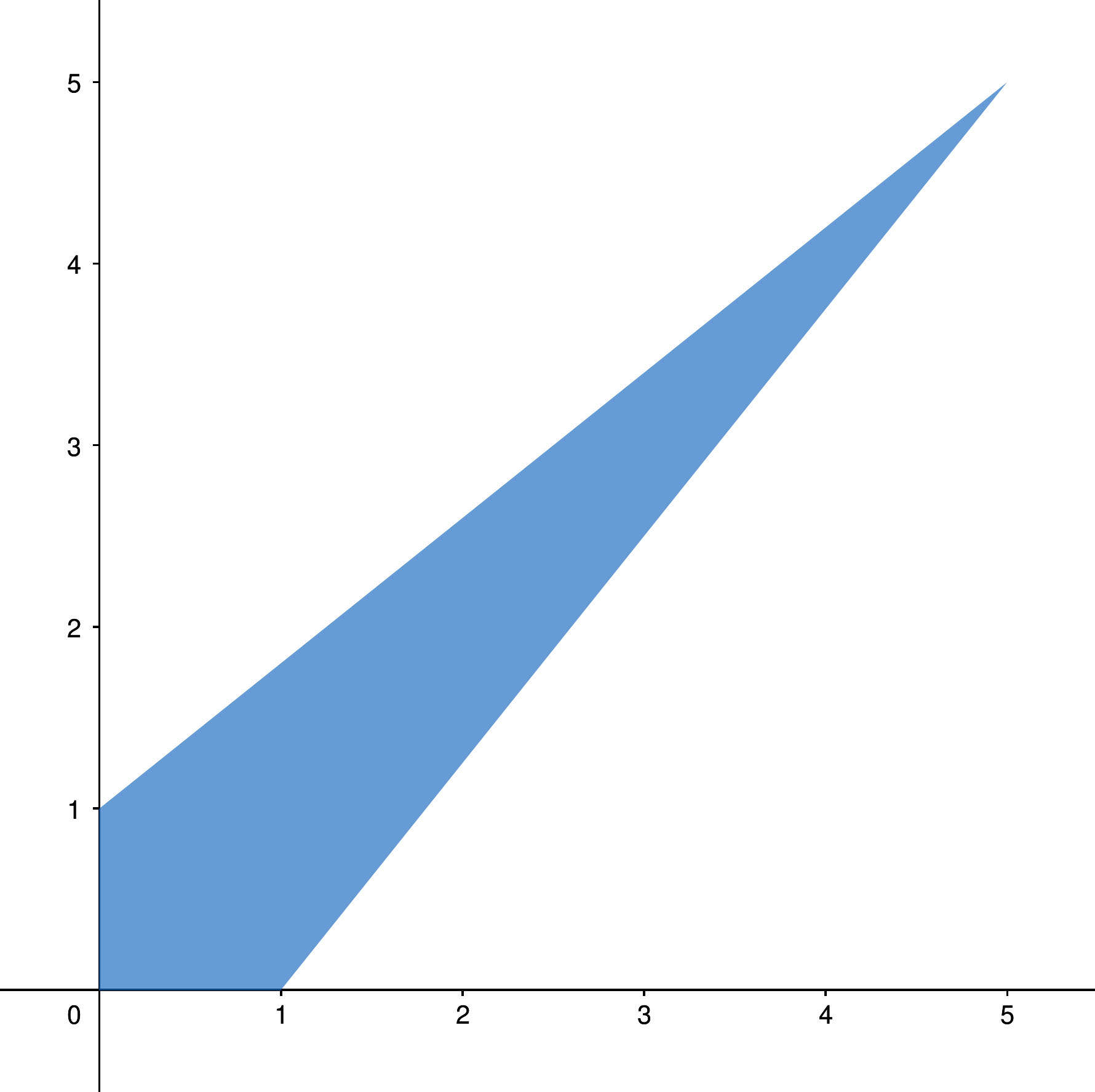}
    \caption{Stretched square domain}
    \label{fig: kite}
\end{figure}
\end{minipage}
\hspace{2pt}
\begin{minipage}[t]{.48\linewidth}
\vspace{0pt}
\begin{figure}[H]
    \centering
    \includegraphics[width=\textwidth]{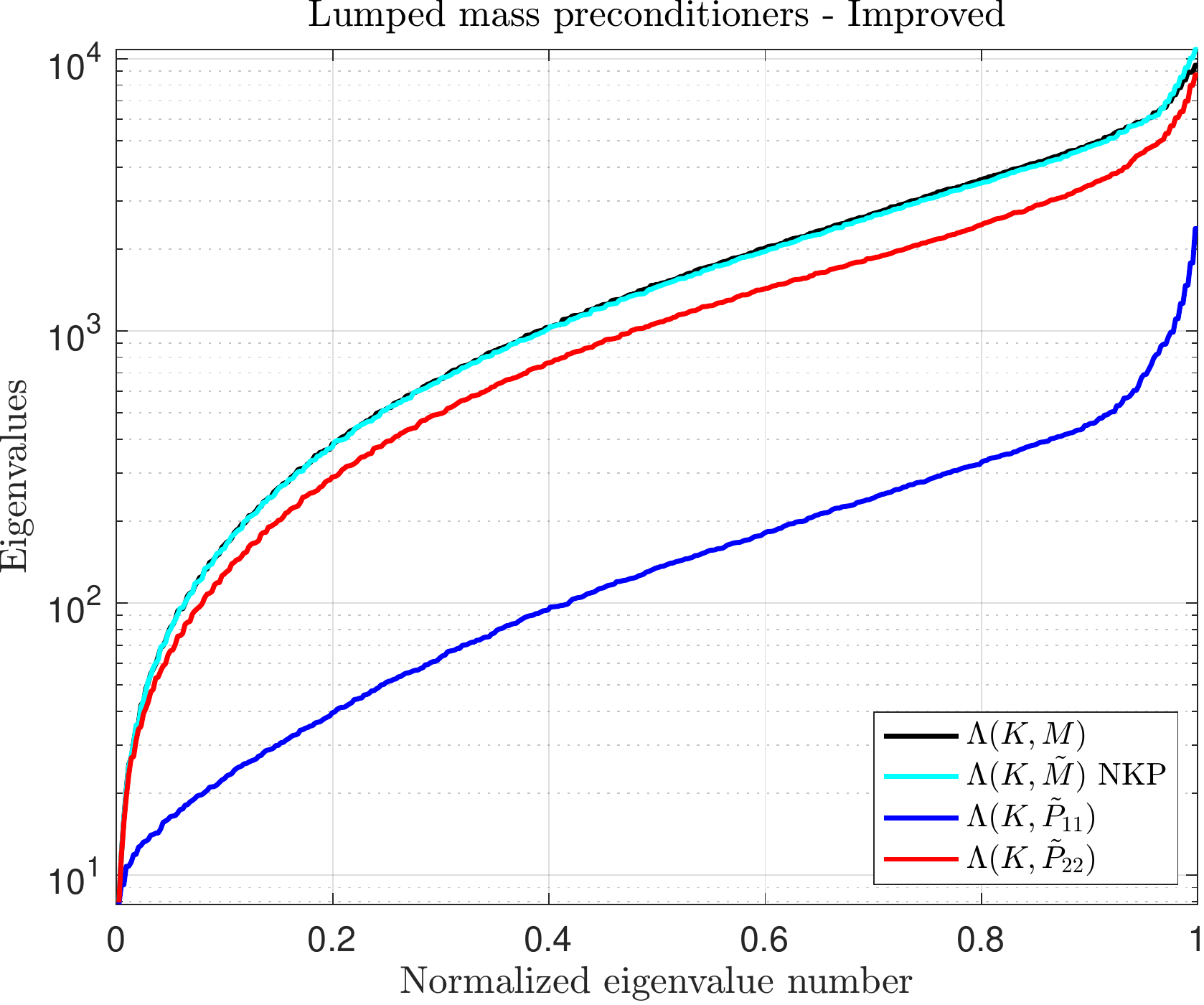}
    \caption{Comparison of $\Lambda(K,M)$ and $\Lambda(K,\tilde{M})$ for the nearest Kronecker product (NKP) preconditioner and $\Lambda(K,\tilde{P}_{ii})$ for $i=1,2,3$}
    \label{fig: 2D_Laplace_stretched_square_LM_+4.5_p3_n20}
\end{figure}
\end{minipage}
\end{center} 

\begin{center}
\begin{minipage}[t]{.48\linewidth}
\vspace{0pt}
\begin{figure}[H]
    \centering
    \includegraphics[width=\textwidth]{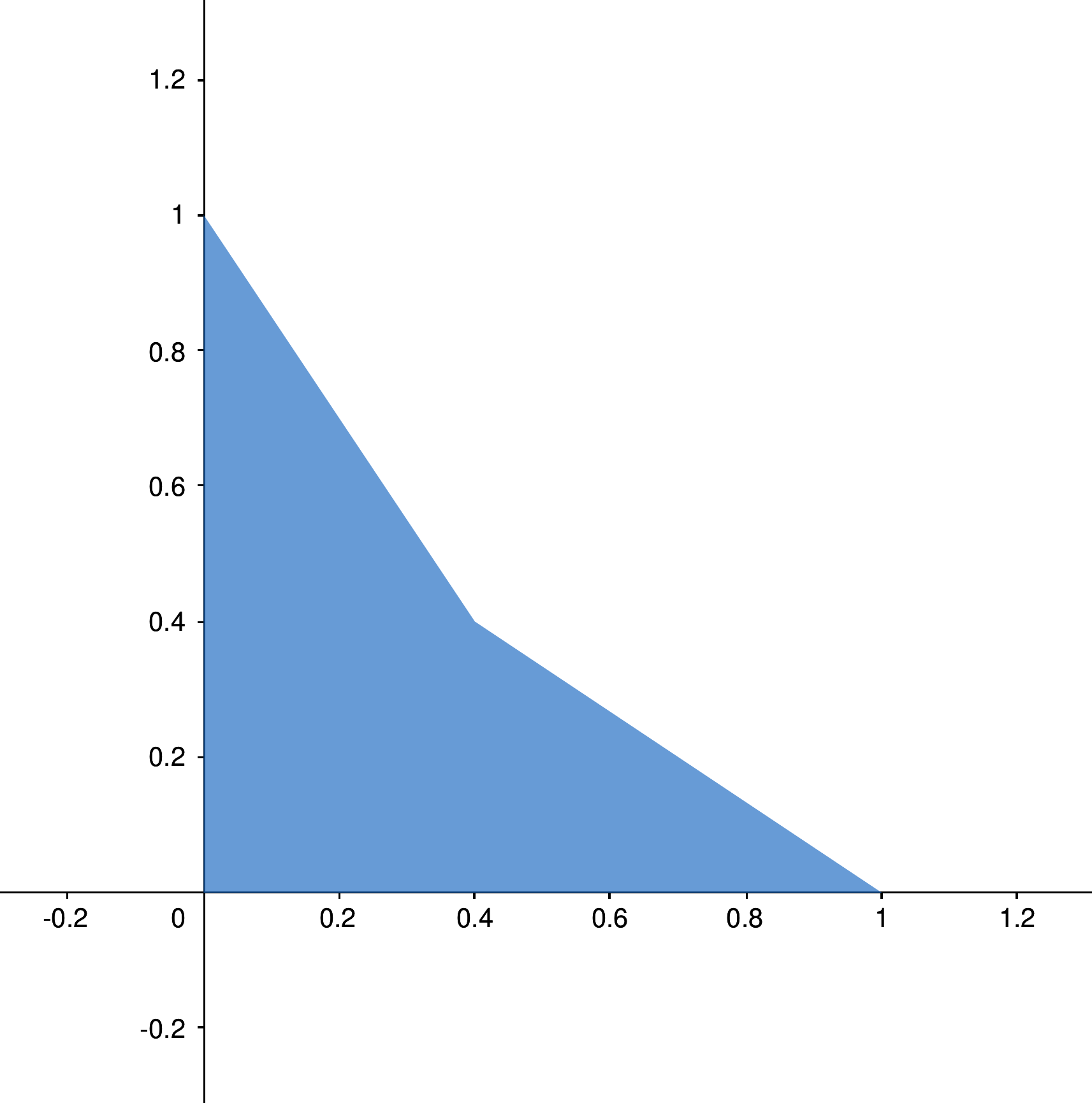}
    \caption{Domain with a re-entrant corner}
    \label{fig: reentrant_corner}
\end{figure}
\end{minipage}
\hspace{2pt}
\begin{minipage}[t]{.48\linewidth}
\vspace{0pt}
\begin{figure}[H]
    \centering
    \includegraphics[width=\textwidth]{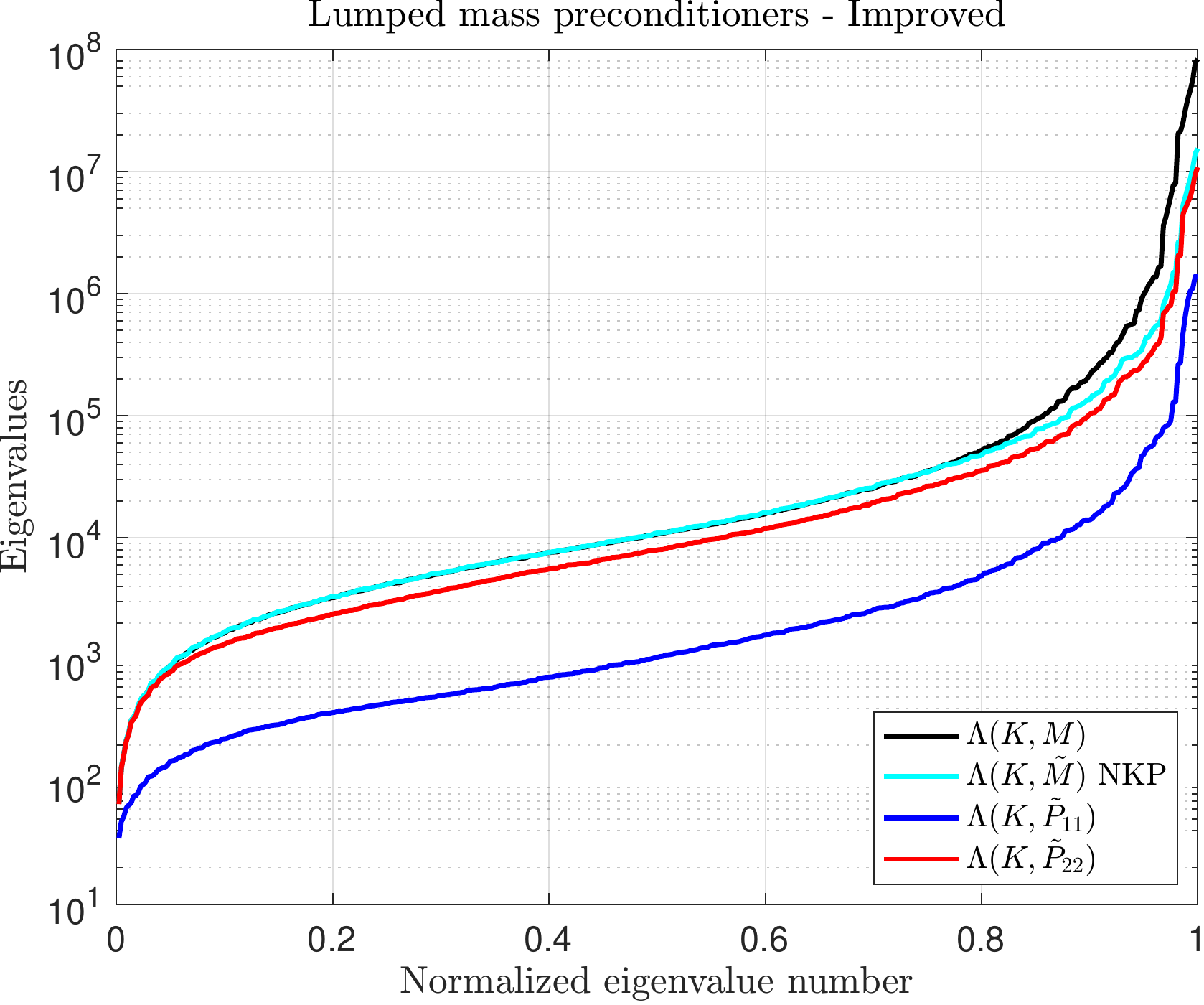}
    \caption{Comparison of $\Lambda(K,M)$ and $\Lambda(K,\tilde{M})$ for the nearest Kronecker product (NKP) preconditioner and $\Lambda(K,\tilde{P}_{ii})$ for $i=1,2,3$}
    \label{fig: 2D_Laplace_stretched_square_LM_-1e-1_p3_n20}
\end{figure}
\end{minipage}
\end{center} 

The condition number of the preconditioned mass matrix with the NKP preconditioner is $2.18$ and $39.1$ for Examples 1 and 2, respectively. The assumptions of Theorem \ref{th:cond_NKP_prec} are satisfied for the first example and the upper bound on the condition number is $2.73$. The assumptions are not satisfied for the second example due to the much slower decay of the singular values. In Figure \ref{fig: 2D_Laplace_stretched_square_LM_+4.5_p3_n20}, we notice that the largest eigenvalues of $(K,\tilde{M})$ are actually slightly greater than those of $(K,M)$. This situation cannot be prevented but our second level approximation involving mass lumping usually fixes it. On the contrary, in Figure \ref{fig: 2D_Laplace_stretched_square_LM_-1e-1_p3_n20}, the largest eigenvalues of $(K,M)$ are greatly underestimated when using the NKP preconditioner. This situation is very favorable and was generally experienced in our experiments with complicated problems. Any mass lumping done subsequently will bring down the eigenvalues further. However, the bandwidth must still be large enough for the smallest eigenvalues to be well approximated. The selective damping of the higher modes will be addressed in future work.
\end{example}

\subsection{Approximation for 3D problems}
\label{se: KP_approx_3D}
The natural analog of the minimization problem for 3D discretizations is
\begin{equation*}
    \min \phi_M(B,C,D)=\min \|M-B \otimes C \otimes D\|_F
\end{equation*}
where $M \in \mathbb{R}^{N \times N}$ with $N=n_1n_2n_3$ and the factor matrices $B \in \mathbb{R}^{n_1 \times n_1}$, $C \in \mathbb{R}^{n_2 \times n_2}$ and $D \in \mathbb{R}^{n_3 \times n_3}$. This extension was not originally investigated by Van Loan and Pitsianis \cite{van1993approximation} but was later analyzed in \cite{langville2004akronecker}. The definition of the mapping $\mathcal{R}$ must be adjusted such that $\mathcal{R}(B \otimes C \otimes D)=\vectorization(B) \circ \vectorization(C) \circ \vectorization(D)$. More generally, it transforms a Kronecker product of $p$ factor matrices to a $p$th order rank-1 tensor obtained by taking the outer product of the vectorization of the factor matrices. The symbol $\circ$ is used to denote the outer product. Note that this definition is consistent with the one previously introduced for the 2D case since $\mathbf{b} \circ \mathbf{c}=\mathbf{b}\mathbf{c}^T$. The minimization problem then translates to finding the best rank-1 tensor approximation of $\mathcal{R}(M)$. Unfortunately, contrary to the matrix case, we cannot hope to compute this \textit{best} rank-1 approximation and we must content ourselves with \textit{good} rank-1 approximations such as those computed with the high order SVD (HOSVD) \cite{de2000multilinear}. Another major shortcoming of the extension to higher dimensions is the lack of theoretical results related to the computed factor matrices. Some proof arguments used in \cite{van1993approximation} for the two-dimensional case cannot be straightforwardly extended to higher dimensions. On the other hand, Algorithm \ref{algo: Low_rank_approx} can be straightforwardly adapted to higher dimensional problems. The computational results were quite similar to those reported in Example \ref{ex: low_rank_2D} and are omitted. \\

\textbf{Complexity analysis}: 
In the 3D case, $\tilde{\mathcal{R}}(M)$ has roughly size $n_1p_1 \times n_2p_2 \times n_3p_3$. There exists various extensions of ACA to tensors. Following the discussion in \citep[][Section 4.3]{hofreither2018black} and in analogy to the 2D case, computing a rank-1 tensor approximation of $\tilde{\mathcal{R}}(M)$ using ACA with partial pivoting requires evaluating three of its fibers. If Gauss quadrature is used for this purpose and assuming uniform discretization parameters ($n_1=n_2=n_3=n$ and $p_1=p_2=p_3=p$), then the cost amounts to $O(np^7)$. However, similarly to the 2D case, the approximation may be quite poor depending on the pivot choice in the ACA algorithm. Alternatively, the strategies proposed in \citep{mantzaflaris2017low, scholz2018partial} are also very appealing, especially because the mass matrix is only needed in Kronecker format (i.e. only the factor matrices of the Kronecker product are needed). Using the partial tensor decomposition proposed in \cite{scholz2018partial}, the expected complexity is $O(n^2p^6)$.

\section{Conclusion}
\label{se: conclusion}
Mass lumping is used on a daily basis for industrial applications in structural dynamics and consists in replacing the consistent mass matrix with some kind of easily invertible (typically diagonal) approximation. In this article, we have unraveled some attractive mathematical features of mass lumping partly responsible for its success. Our results hold under rather broad assumptions, indicating that some properties are not specific to any discretization method. More generally, we have highlighted the close connection between the quality of a preconditioner and the approximation of the eigenvalues. Based on this insight, we have generalized the concept of mass lumping and its properties to matrices with more complex structures including Kronecker products, whose structure can be used to solve linear systems very efficiently. We have then proposed a two-level approximation of the mass matrix in case it is not already expressed as a Kronecker product. Numerical experiments have highlighted the practical usefulness of this approximation in the context of isogeometric analysis by increasing the critical time step of explicit time integration schemes. However, our generalization of mass lumping improves the approximation of all discrete eigenvalues, including the outliers. Outlier eigenvalues are a distinctive feature of maximally smooth isogeometric discretizations. Apart from being completely meaningless, they significantly constrain the critical time step of explicit time integration schemes. Understanding and removing these outliers is therefore of high interest and several follow-up papers have been written on this topic; see e.g., \cite{hughes2014finite,chan2018multi,deng2021boundary, hiemstra2021removal, gallistl2017stability,sande2019sharp,manni2022application}. Future work will attempt to selectively approximate the accurate low-frequency part of the spectrum while undermining the outliers. One possible research direction consists in combining the mass lumping techniques presented here with the optimal spline spaces in \cite{floater2019optimal} which were shown to be outlier-free in \cite{sande2019sharp} and \cite{manni2022application}.

\section*{Acknowledgments}
The second author kindly acknowledges the support of the SNSF through the project ``Smoothness in Higher Order Numerical Methods’’ n. TMPFP2\_209868 (SNSF Swiss Postdoctoral Fellowship 2021). 
\newline The third author kindly acknowledges the support of SNSF through the project ``Design-through-Analysis (of PDEs): the litmus test’’ n. 40B2-0 187094 (BRIDGE Discovery 2019).

\appendix
\section{Appendix}
The method proposed in Section \ref{se: NKP} consists in approximating the mass matrix by a Kronecker product as a preliminary step to applying mass lumping. The error committed in this step depends on the singular values of the reordered matrix $\mathcal{R}(M)$, which are independent of the discretization parameters. This truncation error limits the accuracy of all quantities computed with the approximate mass matrix $\tilde{M}$, including the generalized eigenvalues of $(K,\tilde{M})$. In order to fix ideas, we consider a cubic discretization of the Laplace eigenvalue problem on the unit square with a nontrivial density function given by $\rho(x,y)=|\sin(xy)|+x+y+1$. The singular values of $\mathcal{R}(M)$ are shown in Figure \ref{fig: 2D_Laplace_unit_square_coeff_singv_RM_p3} and reveal that the mass matrix can be approximated up to machine precision by a Kronecker rank $6$ matrix. The convergence test carried out in Figure \ref{fig: 2D_Laplace_unit_square_coeff_rel_error_smallest_eig_p3_r1-4} shows that the discrete eigenfrequencies of $(K,M)$ and $(K,\tilde{M})$ converge at the same rate provided the truncation error is small enough (i.e. the Kronecker rank $r$ of the approximation is sufficiently large). Comparing Figures \ref{fig: 2D_Laplace_unit_square_coeff_singv_RM_p3} and \ref{fig: 2D_Laplace_unit_square_coeff_rel_error_smallest_eig_p3_r1-4} also reveals a close connection between the truncation error, which only depends on the singular values of $\mathcal{R}(M)$, and the relative eigenfrequency error. The behavior shown in Figure \ref{fig: 2D_Laplace_unit_square_coeff_rel_error_smallest_eig_p3_r1-4} indicates that the error committed by approximating the consistent mass by the nearest Kronecker product may dominate the eigenfrequency error. Indeed, denoting $\omega_1$ the smallest exact eigenfrequency and $\omega_{h,1}$ and $\tilde{\omega}_{h,1}$ the smallest discrete eigenfrequencies for the consistent mass and nearest Kronecker product, respectively, then
\begin{equation*}
    \left|\frac{\omega_1-\tilde{\omega}_{h,1}}{\omega_1}\right| = \left|1-\frac{\tilde{\omega}_{h,1}}{\omega_{h,1}}\frac{\omega_{h,1}}{\omega_1}\right| = \left| 1- \frac{\tilde{\omega}_{h,1}}{\omega_{h,1}} + \underbrace{1-\frac{\omega_{h,1}}{\omega_1}}_{O(h^{2p})} -\left(1- \frac{\tilde{\omega}_{h,1}}{\omega_{h,1}}\right)\underbrace{\left(1-\frac{\omega_{h,1}}{\omega_1}\right)}_{O(h^{2p})}\right|
\end{equation*}
While the error $1-\frac{\omega_{h,1}}{\omega_1}$ committed by the consistent mass is known to converge at a high order rate ($2p$) under mild assumptions, the error $1- \frac{\tilde{\omega}_{h,1}}{\omega_{h,1}}$ committed by approximating the consistent mass by the nearest Kronecker product seems bounded independently of the mesh size and explains why the convergence curve for $(K, \tilde{M})$ flattens out when this error term dominates. This statement could be proved if one showed that the consistent mass and its nearest Kronecker product approximation are spectrally equivalent; i.e. there exist constants $c_1,c_2>0$ independent of $h$ such that
\begin{equation*}
    c_1 \tilde{M} \preceq M \preceq c_2 \tilde{M}.
\end{equation*}
To our knowledge, this is an open research problem.

\begin{figure}[H]
    \centering
    \begin{subfigure}[t]{0.48\textwidth}
    \centering
    \includegraphics[width=\textwidth]{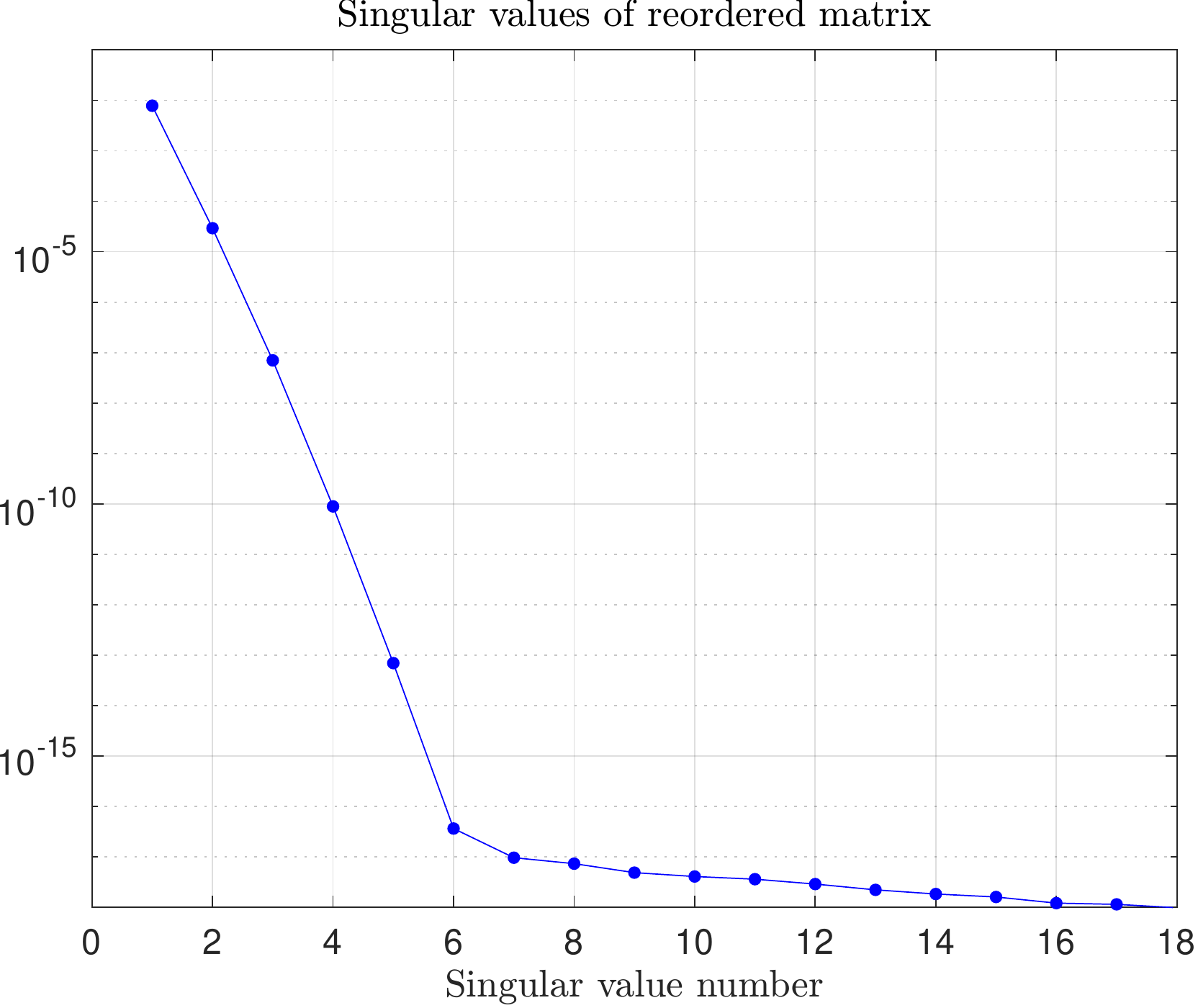}
    \caption{Singular values of $\mathcal{R}(M)$}
    \label{fig: 2D_Laplace_unit_square_coeff_singv_RM_p3}
    \end{subfigure}
    \hfill
    \begin{subfigure}[t]{0.48\textwidth}
    \centering
    \includegraphics[width=\textwidth]{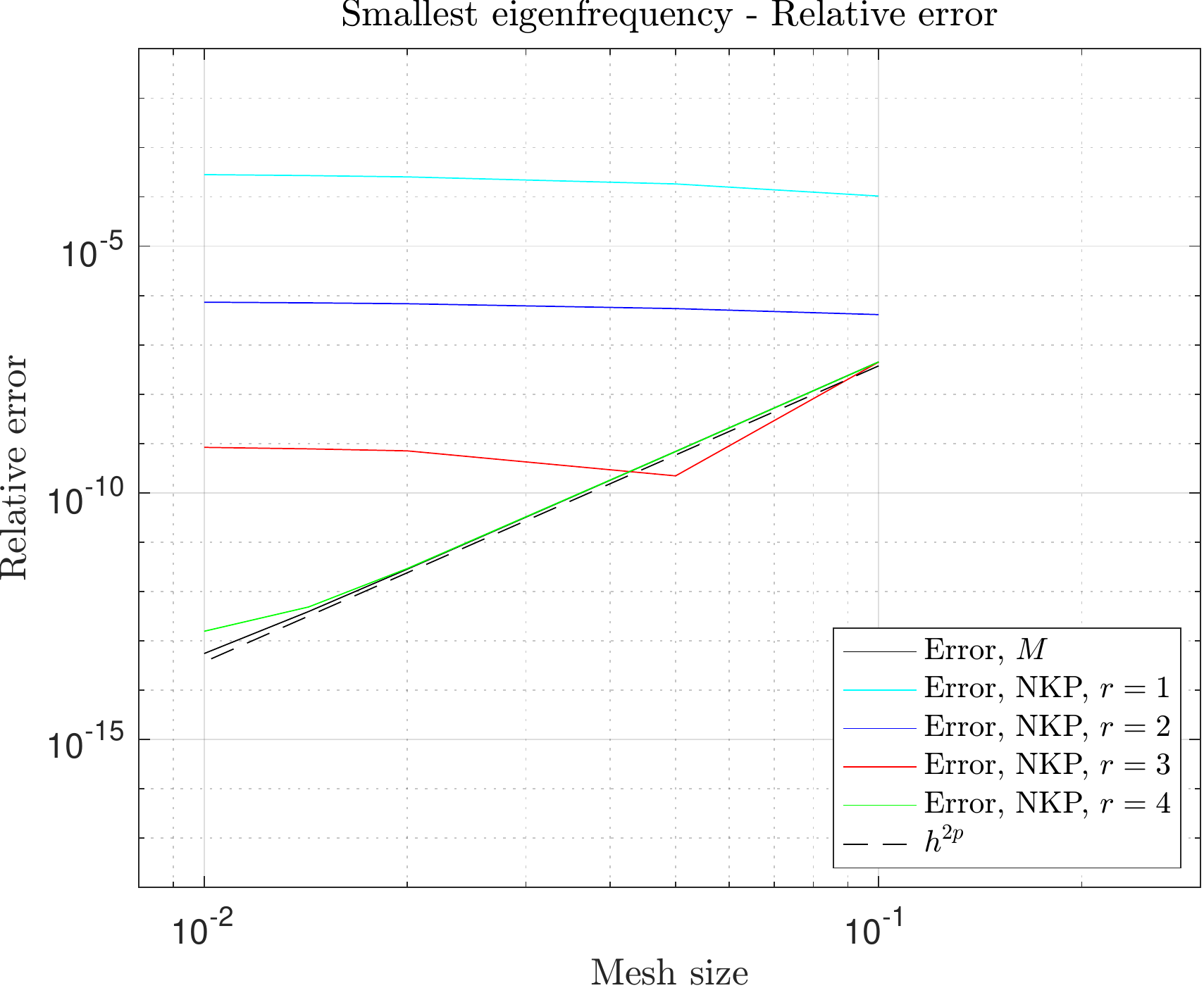}
    \caption{Relative eigenfrequency error computed with a reference solution}
    \label{fig: 2D_Laplace_unit_square_coeff_rel_error_smallest_eig_p3_r1-4}
    \end{subfigure}
    \hfill
    \caption{Nearest Kronecker product approximation}
    \label{fig: 2D_Laplace_unit_square_coeff_NKP}
\end{figure}

\end{document}